\newcommand{\bfs}{\boldsymbol}
\newtheorem{theorem}{Theorem}[section]
\newtheorem{corollary}[theorem]{Corollary}
\newtheorem{lemma}[theorem]{Lemma}
\newtheorem*{fact*}{Fact}
\newtheorem{proposition}[theorem]{Proposition}
\theoremstyle{definition}
\newtheorem{remark}[theorem]{Remark}
\numberwithin{equation}{section}
\newcommand{\N}{\mathbb N}
\newcommand{\Z}{\mathbb Z}
\newcommand{\G}{\mathbb G}
\newcommand{\A}{\mathbb A}
\newcommand{\F}{\mathbb F}
\newcommand{\K}{\mathbb K}
\newcommand{\R}{\mathbb R}
\newcommand{\Pp}{\mathbb P}
\newcommand{\fq}{\F_{\hskip-0.7mm q}}
\newcommand{\fqtwo}{\F_{\hskip-0.7mm q^{2}}}
\newcommand{\fqi}{\F_{\hskip-0.7mm q^i}}
\newcommand{\cfq}{\overline{\F}_{\hskip-0.7mm q}}
\def\ifm#1#2{\relax \ifmmode#1\else#2\fi}
\newcommand{\klk}    {\ifm {,\ldots,} {$,\ldots,$}}
\newcommand{\plp}    {\ifm {+\cdots+} {$+\ldots+$}}
\newcommand{\wt}    {\ifm {{\sf wt}} {{$\sf wt$}}}
\begin{document}
\title[Smooth symmetric systems]{Smooth symmetric systems over a finite field and
applications}
\author[N. Gim\'enez et al.]{
Nardo Gim\'enez${}^{1,2}$
Guillermo Matera${}^{2,3}$,
Mariana P\'erez${}^{1,2}$,
and Melina Privitelli${}^{1,2}$}

\address{${}^{1}$Universidad Nacional de Hurlingham, Instituto de Tecnolog\'ia e
Ingenier\'ia\\ Av. Gdor. Vergara 2222 (B1688GEZ), Villa Tesei,
Buenos Aires, Argentina}
\email{\{nardo.gimenez,mariana.perez,melina.privitelli\}@unahur.edu.ar}
\address{${}^{2}$ National Council of Science and Technology (CONICET),
Ar\-gentina}
\address{${}^{3}$Instituto del Desarrollo Humano,
Universidad Nacional de Gene\-ral Sarmiento, J.M. Guti\'errez 1150
(B1613GSX) Los Polvorines, Buenos Aires, Argentina}
\email{gmatera@ungs.edu.ar}

\thanks{The authors were partially supported by the grants
PIP 11220210100361CO CONICET, PICTO UNAHUR 2019-00012 and UNGS
30/1146.}

\keywords{Finite fields, symmetric polynomials, complete
intersections, singular locus, factorization patterns, deep holes}%

\date{\today}%

\begin{abstract}
We study the set of common $\fq$--rational solutions of ``smooth''
systems of multivariate symmetric polynomials with coefficients in a
finite field $\fq$. We show that, under certain conditions, the set
of common solutions of such polynomial systems over the algebraic
closure of $\fq$ has a ``good'' geometric behavior. This allows us
to obtain precise estimates on the corresponding number of common
$\fq$--rational solutions. In the case of hypersurfaces we are able
to improve the results. We illustrate the interest of these
estimates through their application to certain classical
combinatorial problems over finite fields.
\end{abstract}

\maketitle
%
%----------------------------------------------------------------------
%----------------------------------------------------------------------
%----------------------------------------------------------------------
%----------------------------------------------------------------------
%----------------------------------------------------------------------
%----------------------------------------------------------------------
%----------------------------------------------------------------------
%----------------------------------------------------------------------
%
\section{Introduction}
Several problems of coding theory, cryptography or combinatorics
require the study of the set of rational solutions of polynomial
systems with coefficients in a finite field $\fq$ on which the
symmetric group of permutations of the coordinates acts. In coding
theory, deep holes in standard Reed--Solomon codes over $\fq$ can be
expressed by the set of zeros with coefficients in $\fq$ of certain
symmetric polynomial (see, e.g., \cite{ChMu07} or \cite{CaMaPr12}).
On the other hand, the study of the set of $\fq$--rational zeros of
a certain class of symmetric polynomials is fundamental for the
decoding algorithm for the standard Reed--Solomon code over $\fq$ of
\cite{Sidelnikov94}. In cryptography, the characterization of
monomials defining an almost perfect nonlinear polynomial or a
differentially uniform mapping can be reduced to estimate the number
of $\fq$--rational zeros of certain symmetric polynomials (see,
e.g., \cite{Rodier09} or \cite{AuRo09}). In \cite{GoMa15}, an
optimal representation for the set of $\fq$--rational points of the
trace--zero variety of an elliptic curve defined over $\fq$ has been
obtained using symmetric polynomials. Finally, there are also
important questions in the setting of combinatorics over finite
fields that have been expressed in terms of the number of common
$\fq$--rational zeros of symmetric polynomials defined over $\fq$.
For example, the determination of the average cardinality of the
value set of families of univariate polynomials with coefficients in
$\fq$ (see \cite{CeMaPePr14}, \cite{MaPePr14} and \cite{MaPePr16b})
and the distribution of factorization patterns of families of
univariate polynomials with coefficients in $\fq$ (see
\cite{CeMaPe17} and \cite{MaPePr20}) have been expressed in these
terms.

An outcome of the approach of \cite{CaMaPr12}, \cite{CeMaPePr14},
\cite{CeMaPe17} and \cite{MaPePr20} is a methodology to deal with
some of the problems mentioned above. This methodology relies on the
study of the geometry of the set of common zeros of the symmetric
polynomials under consideration over the algebraic closure $\cfq$ of
$\fq$. In all the cases above we have been able to prove that the
set of common zeros in $\cfq$ of the corresponding symmetric
polynomials is a complete intersection, whose singular locus has a
nontrivial codimension. This has allowed us to apply certain
explicit estimates on the number of $\fq$--rational points of
projective complete intersections defined over $\fq$ (see, e.g.,
\cite{GhLa02a}, \cite{CaMa07}, \cite{CaMaPr15} or \cite{MaPePr16})
to reach to a conclusion for the problem under consideration.

The analysis of \cite{CaMaPr12}, \cite{CeMaPePr14}, \cite{CeMaPe17}
and \cite{MaPePr20} has several points in common, which may be put
on a common basis that might be useful for other problems. For this
reason, in this paper we present a framework where a systematic
study of the geometry of varieties defined by symmetric polynomials
with coefficients in $\fq$ can be carried out. More precisely, for a
positive integer $m$, let $X_1 \klk X_m$ be indeterminates over
$\fq$. For $s<m$, we shall be interested in systems
$F_1=0,\ldots,F_s=0$ defined by polynomials
$F_1,\ldots,F_s\in\fq[X_1,\ldots,X_m]$ which are symmetric in
$X_1,\ldots,X_m$. We shall express these polynomials in terms of the
elementary symmetric polynomials $\Pi_1 \klk \Pi_m$ of $\fq[X_1 \klk
X_m]$, namely
$ F_i=G_i(\Pi_1,\ldots,\Pi_m)$
for $1\le i\le s$, where $G_1\ldots,G_s\in\fq[Y_1,\ldots,Y_m]$ and
$Y_1,\ldots,Y_m$ are new indeterminates over $\fq$. In particular,
we shall study systems $F_1=0,\ldots,F_s=0$ as above where
$F_1,\ldots,F_s$ do not depend on the last $k$ elementary symmetric
polynomials for a given $k\ge 2$, that is, where each $F_i$ can be
expressed as
\begin{equation}\label{eq: def F_s intro}
F_i=G_i(\Pi_1,\ldots,\Pi_{m-k})\quad(1\le i\le s).
\end{equation}
We shall further require that $G_1,\ldots,G_s$ define a ``smooth''
variety, namely
\begin{itemize}
\item[$({\sf A}_1)$]  $(\partial\bfs{G}/
\partial \bfs{Y})$ has full rank on every point of the affine variety
$V(G_1\klk G_s)\subset\A^m$,
\end{itemize}
where $(\partial\bfs{G}/
\partial \bfs{Y})$ is the Jacobian matrix of
$G_1\klk G_s$ with respect to $Y_1\klk Y_m$ and $\A^m$ denotes the
$m$-dimensional affine space $\cfq{\!}^m$.
Finally, we have also to consider the behavior of solutions ``at
infinity''. For this purpose, we introduce the weight
$\wt:\fq[Y_1,\ldots,Y_{m-k}]\to\N$ defined by setting $\wt(Y_j):=j$
for $1\leq j \leq m-k$ and denote by $G_1^{\wt}\klk G_s^{\wt}$ the
components of highest weight of $G_1\klk G_s$. Let
$(\partial\bfs{G}^{\wt}/
\partial \bfs{Y})$ be the Jacobian matrix of
$G_1^{\wt}\klk G_s^{\wt}$ with respect to $Y_1\klk Y_m$. Our last
assumption is the following:
\begin{itemize}
\item[$({\sf A}_2)$]  $(\partial\bfs{G}^{\wt}/
\partial \bfs{Y})$ has full rank on every point of the affine variety $V(G_1^{\wt}\klk
G_s^{\wt})\subset\A^m$.
\end{itemize}
We shall establish a number of facts concerning the geometry of the
affine variety $V:=V(F_1\klk F_s)\subset\A^m$ which will allow us to
obtain precise estimate on the number of $\fq$--rational points of
$V$. More precisely, we have the following result.
\begin{theorem}\label{th: main result - intro}
Suppose that $\mathrm{char}(\fq)$ does not divide
$m(m-1)\cdots(m-k+1)$. Let $F_1,\ldots,F_s$ be polynomials as in
\eqref{eq: def F_s intro}, satisfying assumptions $({\sf A_1})$ and
$({\sf A_2})$, and let $V:=V(F_1\klk F_s)\subset\A^m$. Let
$d_i:=\deg F_i$ for $1\le i\le s$, $\delta:=\prod_{i=1}^sd_i$,
$D:=\sum_{i=1}^s(d_i-1)$ and $d:=\max\{d_1,\ldots,d_s\}$. If
$|V(\fq)|$ is the cardinality of the set $V(\fq)$ of $\fq$--rational
points of $V$, then the following estimates hold:
\begin{align*}
\big||V(\fq)|-q^{m-s}\big|\le
\left\{\begin{array}{l}q^{m-s-\frac{1}{2}}(1+q^{-1})\big((\delta(D-2)+2)+14D^2\delta^2q^{-\frac{1}{2}}\big)\
\textrm{
for }k=2,\\[1ex]
q^{m-s-1}(1+q^{-1})14D^3\delta^2\ \textrm{ for
}k=3.\end{array}\right.
\end{align*}
Further, for $2\le k<m-s$ we have
\begin{align*}
\big||V(\fq)|-q^{m-s}\big|&\le
q^{m-s-\frac{k-1}{2}}(1+q^{-1})\bigg({m+1\choose s+1}(d+1)^m+9\cdot
2^s(sd+3)^{m+1} q^{-\frac{1}{2}}\bigg).
\end{align*}
\end{theorem}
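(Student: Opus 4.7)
The plan is to reduce the theorem to an application of existing estimates on the number of $\fq$--rational points of singular projective complete intersections, such as those in \cite{GhLa02a}, \cite{CaMa07}, \cite{CaMaPr15} and \cite{MaPePr16}. To apply such estimates I need two geometric inputs: that the projective closure $V^{\mathrm{pr}}\subset\Pp^m$ of $V$ is an ideal--theoretic complete intersection of pure dimension $m-s$ whose singular locus has codimension at least $k-1$, together with the analogous statements for the variety at infinity $V^\infty:=V^{\mathrm{pr}}\cap H_\infty$. Once these regularity properties are established, the cited estimates bound $|V^{\mathrm{pr}}(\fq)|$ and $|V^\infty(\fq)|$ with error of order $q^{m-s-(k-1)/2}$ and the identity $|V(\fq)|=|V^{\mathrm{pr}}(\fq)|-|V^\infty(\fq)|$ produces the stated bounds.

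First I would analyze the affine picture. Writing $\bfs{\Pi}:=(\Pi_1\klk\Pi_{m-k})$, we have $V=\bfs{\Pi}^{-1}(W)$ with $W:=V(G_1\klk G_s)\subset\A^{m-k}$. By $({\sf A}_1)$, $W$ is a smooth complete intersection of dimension $m-k-s$, and pulling this back via $\bfs{\Pi}$ together with a Krull--dimension argument gives that $V$ is a complete intersection of dimension $m-s$. For the singular locus I use the chain rule identity $(\partial\bfs F/\partial\bfs X)(x)=(\partial\bfs G/\partial\bfs Y)(\bfs{\Pi}(x))\cdot(\partial\bfs{\Pi}/\partial\bfs X)(x)$: since $({\sf A}_1)$ forces the first factor to have full rank $s$ on $V$, singularities of $V$ can only arise where the image of $(\partial\bfs{\Pi}/\partial\bfs X)(x)$ fails to be transverse to the null space of $(\partial\bfs G/\partial\bfs Y)^T$. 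A classical identity expresses the maximal minors of $(\partial(\Pi_1\klk\Pi_{m-k})/\partial\bfs X)(x)$ as Vandermonde--like products, so rank drop corresponds to coincidences among the coordinates of $x$; the hypothesis that $\mathrm{char}(\fq)$ does not divide $m(m-1)\cdots(m-k+1)$ is precisely what keeps the relevant coincidence strata of the expected codimension. A dimension count then yields that the singular locus of $V$ has codimension at least $k-1$ in $V$.

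For the behavior at infinity, $V^\infty$ is cut out by the leading forms of $F_1\klk F_s$, which can be rewritten via the elementary symmetric polynomials in terms of the highest--weight components $G_1^{\wt}\klk G_s^{\wt}$. Assumption $({\sf A}_2)$ plays for $V^\infty$ the same role that $({\sf A}_1)$ plays for $V$, so repeating the argument above shows that $V^\infty$ is an $(m-s-1)$--dimensional complete intersection whose singular locus has codimension at least $k-1$.

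With these regularity properties in hand, the estimates of the cited references apply directly to both $V^{\mathrm{pr}}$ and $V^\infty$, producing the error term $q^{m-s-(k-1)/2}$ with explicit combinatorial constants in the degrees $d_i$, $\delta$, $D$ and $d$. The sharper constants for $k=2$ and $k=3$ come from invoking the tighter estimates of \cite{GhLa02a} and \cite{MaPePr16} for normal complete intersections and for complete intersections with singular locus of codimension at least two, respectively, rather than the general--purpose bound used for $k\ge 4$. The main obstacle will be the singular locus bound: controlling the rank of the composed Jacobian demands a precise combinatorial analysis of how coordinate coincidences interact with the kernel of $(\partial\bfs G/\partial\bfs Y)^T$, and this is exactly the point at which both $({\sf A}_1)$ and the characteristic hypothesis on $m(m-1)\cdots(m-k+1)$ must be used simultaneously.
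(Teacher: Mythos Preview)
Your overall architecture---pass to the projective closure, show it and its hyperplane section at infinity are complete intersections regular in codimension $k-1$, then plug into the estimates of \cite{CaMaPr15} and \cite{GhLa02a}---is exactly the route the paper takes. What is missing is the step you summarize as ``a dimension count.''

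The Vandermonde identity does show that points where $J\bfs F_s$ drops rank have at most $m-k-1$ pairwise distinct coordinates, so the singular locus is contained in the union of coincidence strata $\mathcal L_{\mathcal I}$ with $\mathcal I$ a partition of $\{1,\dots,m\}$ into at most $m-k$ parts. Each such $\mathcal L_{\mathcal I}$ has codimension $\ge k$ in $\A^m$, but this alone does not control $\dim(\mathcal L_{\mathcal I}\cap V)$: one needs to know that $V$ meets the strata properly. The paper's mechanism for this is to observe that $\mathcal L_{\mathcal I}\cap V$ is isomorphic to $\bfs\Pi_{m-k,\mathcal I}^{-1}\bigl(\{\bfs G_s=\bfs 0\}\bigr)$, where $\bfs\Pi_{m-k,\mathcal I}:\A^{m-k}\to\A^{m-k}$ is the restriction of the first $m-k$ elementary symmetric polynomials to the stratum. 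The nontrivial claim is that $\bfs\Pi_{m-k,\mathcal I}$ is a \emph{finite morphism}; only then can one transfer the bound $\dim\{\bfs G_s=\bfs 0\}=m-k-s$ to the preimage. The proof of finiteness rests on showing that the successive subdiscriminants $\mathrm{sDisc}_0,\dots,\mathrm{sDisc}_{k-1}$ vanish identically on the image and that the quotient by these subdiscriminants is integral over $\K[\Pi_1,\dots,\Pi_{m-k}]$ (this is the content of the appendix). It is here, in the integrality argument for the subdiscriminant ideal, that the hypothesis $p\nmid m(m-1)\cdots(m-k+1)$ is actually used---not in the codimension of the linear strata $\mathcal L_{\mathcal I}$, which are linear and do not depend on the characteristic at all.

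So the gap is that ``the characteristic hypothesis keeps the coincidence strata of the expected codimension'' misidentifies where the obstruction lies, and ``a dimension count'' glosses over the finiteness of $\bfs\Pi_{m-k,\mathcal I}$, which is the real work. Your proposal also states the conclusion as codimension $\ge k-1$, but the argument in fact gives codimension $\ge k$ (i.e., regular in codimension $k-1$ in the paper's terminology), which is what the cited estimates require.
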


In the case of hypersurfaces, namely for $s=1$, we are able to
improve Theorem \ref{th: main result - intro}, relaxing somewhat the
conditions under which we have precise estimates on the number of
$\fq$-rational zeros of the polynomial under consideration. More
precisely, we have the following result.
\begin{theorem}\label{th: estimate |V_F| intro}
Suppose that $\mathrm{char}(\fq)$ does not divide
$m(m-1)\cdots(m-k)$. Let $F$ be a polynomial of the form
$F=G(\Pi_1,\ldots,\Pi_{m-k+1})$, satisfying assumptions $({\sf
A_1})$ and $({\sf A_2})$, and let $V_F:=V(F)\subset\A^m$. Let
$d:=\deg F$. We have
$$
\big||V_F(\fq)|-q^{m-1}\big|\leq
\left\{\begin{array}{c}
q^{m-\frac{3}{2}}(1+q^{-1})\big((d-1)(d-2)+14(d-1)^2d^2q^{-\frac{1}{2}}\big)
\textrm{ for }k=2,\\[1ex]
q^{m-2}(1+q^{-1})14(d-1)^3d^2\textrm{ for }k=3,
\\[1ex]
q^{m-\frac{k+1}{2}}(1+q^{-1})\big((d-1)^k+
6(d+2)^{m+1}q^{-\frac{1}{2}}\big)\textrm{ for }2\le k<m-1.
\end{array}\right.
$$
\end{theorem}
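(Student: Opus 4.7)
The plan is to derive the three estimates from a geometric study of $V_F$ and its projective closure, and then to feed the resulting invariants into explicit bounds for $\fq$--rational points on projective hypersurfaces with controlled singular locus, in the style of \cite{CaMa07}, \cite{CaMaPr15} and \cite{MaPePr16}. Specifically, I would show that $V_F\subset\A^m$ is an absolutely irreducible hypersurface of dimension $m-1$ whose singular locus has dimension at most $m-k-1$, and that the analogous bound holds for the hypersurface at infinity of its projective closure. Substituting $\dim V_F=m-1$ and $\dim\mathrm{Sing}(V_F)\le m-k-1$ into the corresponding estimate of the cited references then produces the three error terms in the statement.

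The heart of the argument is the singular-locus analysis, based on the chain rule
$$
\nabla F(x)\;=\;\nabla G\bigl(\Pi_1(x),\ldots,\Pi_{m-k+1}(x)\bigr)\cdot J_\Pi(x),
$$
where $J_\Pi(x)\in\cfq^{(m-k+1)\times m}$ is the Jacobian of $\Pi_1,\ldots,\Pi_{m-k+1}$ at $x$. Assumption $({\sf A}_1)$ keeps $\nabla G$ away from zero on $V(G)$, so any singular $x\in V_F$ must make the row vector $\nabla G(\Pi(x))$ lie in the left kernel of $J_\Pi(x)$. A Vandermonde computation using the identity $\partial\Pi_j/\partial X_i=\Pi_{j-1}(X_1,\ldots,\widehat{X_i},\ldots,X_m)$ shows that the rank of $J_\Pi(x)$ equals $\min(m-k+1,\,r(x))$, where $r(x)$ denotes the number of distinct coordinates of $x$; the hypothesis $\mathrm{char}(\fq)\nmid m(m-1)\cdots(m-k)$ is what preserves invertibility of the relevant minors in positive characteristic. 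Consequently singular points of $V_F$ lie in the stratum $\Delta_{m-k}\subset\A^m$ of points with at most $m-k$ distinct coordinates, a set of pure dimension $m-k$. A proper-intersection argument then shows $V_F\cap\Delta_{m-k}$ has dimension at most $m-k-1$, the key point being to rule out, using $({\sf A}_1)$, that $V(G)$ contains the image $\Pi(\Delta_{m-k})\subset\A^{m-k+1}$.

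For the behaviour at infinity I would homogenize $F$ and examine the intersection of the projective closure of $V_F$ with the hyperplane $\{X_0=0\}\subset\Pp^m$. Because the substitution $Y_j\mapsto\Pi_j$ is homogeneous of degree $j$, the leading form of $F$ equals $G^{\wt}(\Pi_1,\ldots,\Pi_{m-k+1})$ after the appropriate weighted homogenization; assumption $({\sf A}_2)$ together with the same Jacobian-rank computation then bounds the singular locus at infinity by the same codimension. With the projective closure now an absolutely irreducible hypersurface of dimension $m-1$ in $\Pp^m$ whose total singular locus has dimension at most $m-k-1$, the three cases $k=2$, $k=3$ and $2\le k<m-1$ follow by inserting these invariants into the corresponding estimates of the cited works.

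The main obstacle is the Jacobian-rank analysis in positive characteristic, especially in the borderline case $r(x)=m-k$, where the left kernel of $J_\Pi(x)$ is one-dimensional and one needs the extra condition that $\nabla G(\Pi(x))$ be proportional to a fixed generator of this kernel in order to cut dimensions further; this is precisely what forces the hypothesis $\mathrm{char}(\fq)\nmid m(m-1)\cdots(m-k)$. Carrying out this borderline analysis, together with its analogue at infinity under $({\sf A}_2)$, is the most delicate technical step of the proof.
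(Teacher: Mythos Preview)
Your overall architecture is the paper's: bound $\dim\Sigma_F\le m-k-1$, deduce the same bound for the singular locus at infinity via $({\sf A}_2)$, conclude that $\mathrm{pcl}(V_F)$ and $\mathrm{pcl}(V_F)^\infty$ are hypersurfaces regular in codimension $k-1$, and then plug into the explicit estimates of \cite{CaMaPr15} and \cite{GhLa02a}. The first step (singular points have at most $m-k$ distinct coordinates, via the Vandermonde rank computation) also matches the paper exactly.

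The gap is in how you pass from $\Sigma_F\subset\Delta_{m-k}$ to $\dim\Sigma_F\le m-k-1$. You propose to bound $\dim(V_F\cap\Delta_{m-k})$ by a ``proper-intersection argument'', ruling out via $({\sf A}_1)$ that $V(G)$ contains $\Pi(\Delta_{m-k})$. This does not work. What one actually needs is that $V(G)$ contains no individual $\overline{\Pi(\mathcal L_{\mathcal I})}$ for a partition $\mathcal I$ of $\{1,\dots,m\}$ into $m-k$ blocks, and smoothness of $V(G)$ does not prevent this: each such image is itself a hypersurface in $\A^{m-k+1}$, and one can take $G$ to be its (smooth) defining equation. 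For instance, with $m=4$, $k=2$ and $G=Y_1^3-4Y_1Y_2+8Y_3$, the hypersurface $V(G)$ is smooth (so $({\sf A}_1)$ and $({\sf A}_2)$ hold) yet $F=G(\Pi_1,\Pi_2,\Pi_3)$ vanishes identically on $\mathcal L_{\{1,2\},\{3,4\}}$, so $V_F\cap\Delta_{m-k}$ has dimension $m-k$. Your own last paragraph hints at the real mechanism (using the \emph{full} singularity condition, not just $F=0$), but the body of the proposal does not use it.

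The paper's route is quite different at this point and considerably more intricate. Assuming for contradiction that $\Sigma_F$ has an $(m-k)$-dimensional component $\mathcal C$, it shows $\mathcal C=\mathcal L_{\mathcal I}$ for some partition into $m-k$ blocks, writes the system $\nabla F|_{\mathcal C}=0$ as the $(m-k)\times(m-k)$ linear system
\[
B(\bfs x)\bigl(\partial G/\partial Y_j(\bfs\Pi_{m-k+1}(\bfs x))\bigr)_{j\le m-k}
=\tfrac{\partial G}{\partial Y_{m-k+1}}(\bfs\Pi_{m-k+1}(\bfs x))\cdot\bigl(-\partial\Pi_{m-k+1}/\partial X_l(\bfs x)\bigr)_{l\le m-k},
\]
and then invokes a determinantal identity (Proposition~\ref{prop: det Bj=0}, proved via Cauchy--Binet, the Trudi formula for Toeplitz--Hessenberg determinants, and the Newton relations between complete and elementary symmetric polynomials) asserting that every Cramer numerator $\det B^j$ vanishes identically. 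From this the paper concludes $\partial G/\partial Y_j\circ\bfs\Pi_{m-k+1}\equiv 0$ on $\mathcal C$ for all $j\le m-k$, forcing $\bfs\Pi_{m-k+1}(\mathcal C)\subset V(G)\cap V(\partial G/\partial Y_j)$, which has dimension at most $m-k-1$; finiteness of $\bfs\Pi_{m-k+1,\mathcal I}$ (Proposition~\ref{prop: Phi_i is a finite morphism}) then yields the contradiction. A separate short argument disposes of the degenerate case $G=f(Y_{m-k+1})$. None of this machinery appears in your proposal, and the ``proper-intersection'' shortcut you describe cannot replace it.
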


There is a natural correspondence between orbits (under the action
of the group of permutations of coordinates) of points of varieties
defined by systems of symmetric polynomials with coefficients in
$\cfq$  and univariate polynomials with coefficients in $\cfq$. More
precisely, let $\bfs x\in\A^m$ be a common zero of polynomials
$F_1,\ldots,F_s$ as in \eqref{eq: def F_s intro}. Then the map $\bfs
x\mapsto T^m+\sum_{j=1}^m(-1)^j\Pi_j(\bfs x)T^{m-j}$ sends the orbit
of $\bfs x$ to the monic polynomial of $\cfq[T]$ of degree $m$
having the coordinates of $\bfs x$ as its roots over $\cfq$
(counting multiplicities). In this way, questions on the geometry of
varieties defined by symmetric polynomials can be expressed as
problems concerning the structure of families of univariate
polynomials in $\cfq[T]$, and viceversa. A critical point in our
approach is the use of this correspondence to establish properties
on the geometry of the variety $V\subset\A^m$ defined by polynomials
$F_1,\ldots,F_s$ as in \eqref{eq: def F_s intro}. In particular, we
study the variety defined by the vanishing of successive generalized
subdiscriminants. This variety has been the subject of, e.g.,
\cite{Batkhin16} and \cite{Batkhin18}. In Appendix \ref{section:
appendix} we show that fixing the last, say, $t$ coefficients of a
univariate polynomial vanishing on the first $t$ subdiscriminants
yields a finite-to-one determination. This is critical to
characterize the dimension of the singular locus of the varieties
$V$ and $V_F$ underlying Theorems \ref{th: main result - intro} and
\ref{th: estimate |V_F| intro}.

We illustrate the interest of Theorems \ref{th: main result - intro}
and \ref{th: estimate |V_F| intro} through their application to a
classical combinatorial problem over finite fields and a problem of
coding theory. The combinatorial problem is concerned with the
distribution of factorization patterns on families of monic
polynomials of given degree of $\fq[T]$. If the family under
consideration consists of the monic polynomials of degree $n$ for
which the first $s<n-2$ coefficients are fixed, then we have a
function--field analogous to the classical conjecture on the number
of primes in short intervals, which has been the subject of several
articles (see, e.g., \cite{Pollack13}, \cite{BaBaRo15} or \cite[\S
3.5]{MuPa13}). The study of general linear families of polynomial
goes back at least to \cite{Cohen72}. Here, following the approach
of \cite{CeMaPe17} and \cite{MaPePr20}, we obtain an explicit
estimate on the number of elements on nonlinear families of monic
polynomials of fixed degree of $\fq[T]$ having a given factorization
pattern. For this purpose, we rely on Theorem \ref{th: main result -
intro}, which allows us to improve the results of \cite{CeMaPe17}
and \cite{MaPePr20}. We remark that Theorem \ref{th: main result -
intro} can also be applied to estimate the average cardinality of
the value sets of families of polynomials of $\fq[T]$ with certain
consecutive coefficients prescribed (see \cite{CeMaPePr14},
\cite{MaPePr14} and \cite{MaPePr16b}).

The problem of coding theory we consider is concerned with the
existence of deep holes in standard Reed-Solomon codes over $\fq$.
In \cite{ChMu07} it is conjectured that a given word is a deep hole
of the standard Reed-Solomon code $C$ of dimension $k$ over $\fq$ if
and only if it is generated by a polynomial $f\in\fq[T]$ of degree
$k$ (see, e.g., \cite{LiWa08}, \cite{WuHo12} and \cite{LiZh16} for
further work on this). Furthermore, the existence of deep holes of
$C$ is related to the non-existence of $\fq$--rational points of a
certain hypersurface defined over $\fq$. In \cite{CaMaPr12} we show
that this hypersurface is defined by a symmetric polynomial. Taking
into account the characterization of this hypersurface of
\cite{CaMaPr12}, using Theorem \ref{th: estimate |V_F| intro} we
shall be able to improve the results of \cite{CaMaPr12}.

The paper is organized as follows. In Section \ref{section:
notation, notations} we briefly recall the notions and notations of
algebraic geometry we use. Section \ref{section: smooth symmetric
systems} is devoted to present our unified framework and to
establish the first results on the geometry of varieties defined by
general smooth symmetric systems. Section \ref{section: singular
locus F_s not depending on Pi_(m-1),Pi_m} is devoted to the study of
varieties defined by smooth symmetric systems as in \eqref{eq: def
F_s intro} and the proof of Theorem \ref{th: main result - intro}.
Section \ref{section: hypersurfaces not depending on Pi_m} is
concerned with hypersurfaces and the proof of Theorem \ref{th:
estimate |V_F| intro}. In Sections \ref{section: distribution of
fact patterns} and \ref{section: deep holes} we apply Theorems
\ref{th: main result - intro} and \ref{th: estimate |V_F| intro} to
determine the distribution of factorization patterns of families of
univariate polynomials and to establish criteria for the
nonexistence of deep holes in standard Reed-Solomon codes. Finally,
in Appendix \ref{section: appendix} we consider the behavior of the
variety defined by the vanishing of successive generalized
subdiscriminants.
%
%----------------------------------------------------------------
%----------------------------------------------------------------
%----------------------------------------------------------------
%----------------------------------------------------------------
%
\section{Basic notions of algebraic geometry}
\label{section: notation, notations}
In this section we collect the basic definitions and facts of
algebraic geometry that we need in the sequel. We use standard
notions and notations which can be found in, e.g., \cite{Kunz85},
\cite{Shafarevich94}.

Let $p$ be a prime number, $q$ a power of $p$, $\fq$ the finite
field of $q$ elements and $\cfq$ its algebraic closure. Let $\K$ be
any of the fields $\fq$ or $\cfq$. We denote by $\A^r$ the affine
$r$--dimensional space $\cfq{\!}^{r}$ and by $\Pp^r$ the projective
$r$--dimensional space over $\cfq{\!}^{r+1}$. Both spaces are
endowed with their respective Zariski topologies over $\K$, for
which a closed set is the zero locus of a set of polynomials of
$\K[X_1,\ldots, X_r]$, or of a set of homogeneous polynomials of
$\K[X_0,\ldots, X_r]$.

A subset $V\subset \Pp^r$ is a {\em projective variety defined over}
$\K$ (or a projective $\K$--variety for short) if it is the set of
common zeros in $\Pp^r$ of homogeneous polynomials $F_1,\ldots, F_m
\in\K[X_0 ,\ldots, X_r]$. Correspondingly, an {\em affine variety of
    $\A^r$ defined over} $\K$ (or an affine $\K$--variety) is the set of
common zeros in $\A^r$ of polynomials $F_1,\ldots, F_{m} \in
\K[X_1,\ldots, X_r]$. We think a projective or affine $\K$--variety
to be equipped with the induced Zariski topology. We shall denote by
$\{F_1=0,\ldots, F_m=0\}$ or $V(F_1,\ldots,F_m)$ the affine or
projective $\K$--variety consisting of the common zeros of
$F_1,\ldots, F_m$.

In the remaining part of this section, unless otherwise stated, all
results referring to varieties in general should be understood as
valid for both projective and affine varieties.

A $\K$--variety $V$ is {\em irreducible} if it cannot be expressed
as a finite union of proper $\K$--subvarieties of $V$. Further, $V$
is {\em absolutely irreducible} if it is $\cfq$--irreducible as a
$\cfq$--variety. Any $\K$--variety $V$ can be expressed as an
irredundant union $V=\mathcal{C}_1\cup \cdots\cup\mathcal{C}_s$ of
irreducible (absolutely irreducible) $\K$--varieties, unique up to
reordering, called the {\em irreducible} ({\em absolutely
irreducible}) $\K$--{\em components} of $V$.

For a $\K$--variety $V$ contained in $\Pp^r$ or $\A^r$, its {\em
defining ideal} $I(V)$ is the set of polynomials of $\K[X_0,\ldots,
X_r]$, or of $\K[X_1,\ldots, X_r]$, vanishing on $V$. The {\em
coordinate ring} $\K[V]$ of $V$ is the quotient ring
$\K[X_0,\ldots,X_r]/I(V)$ or $\K[X_1,\ldots,X_r]/I(V)$. The {\em
dimension} $\dim V$ of $V$ is the length $n$ of a longest chain
$V_0\varsubsetneq V_1 \varsubsetneq\cdots \varsubsetneq V_n$ of
nonempty irreducible $\K$--varieties contained in $V$. We say that
$V$ has {\em pure dimension} $n$ if every irreducible
$\K$--component of $V$ has dimension $n$. A $\K$--variety of $\Pp^r$
or $\A^r$ of pure dimension $r-1$ is called a $\K$--{\em
hypersurface}. A $\K$--hypersurface of $\Pp^r$ (or $\A^r$) can also
be described as the set of zeros of a single nonzero polynomial of
$\K[X_0,\ldots, X_r]$ (or of $\K[X_1,\ldots, X_r]$).

The {\em degree} $\deg V$ of an irreducible $\K$--variety $V$ is the
maximum of the cardinaliy $|V\cap L|$ of $V\cap L$, considering all
the linear spaces $L$ of codimension $\dim V$ such that $|V\cap
L|<\infty$. More generally, following \cite{Heintz83} (see also
\cite{Fulton84}), if $V=\mathcal{C}_1\cup\cdots\cup \mathcal{C}_s$
is the decomposition of $V$ into irreducible $\K$--components, we
define the degree of $V$ as
$$\deg V:=\sum_{i=1}^s\deg \mathcal{C}_i.$$
The degree of a $\K$--hypersurface $V$ is the degree of a polynomial
of minimal degree defining $V$. %Another property is that the degree
%of a dense open subset of a $\K$--variety $V$ is equal to the degree
%of $V$.
%
We shall use the following {\em B\'ezout inequality} (see
\cite{Heintz83}, \cite{Fulton84}, \cite{Vogel84}): if $V$ and $W$
are $\K$--varieties of the same ambient space, then
\begin{equation}\label{eq: Bezout}
\deg (V\cap W)\le \deg V \cdot \deg W.
\end{equation}

Let $V\subset\A^r$ be a $\K$--variety, $I(V)\subset \K[X_1,\ldots,
X_r]$ its defining ideal and $\bfs x$ a point of $V$. The {\em
dimension} $\dim_{\bfs x}V$ {\em of} $V$ {\em at} $\bfs x$ is the
maximum of the dimensions of the irreducible $\K$--components of $V$
containing $\bfs x$. If $I(V)=(F_1,\ldots, F_m)$, the {\em tangent
space} $\mathcal{T}_{\bfs x}V$ to $V$ at $\bfs x$ is the kernel of
the Jacobian matrix $(\partial F_i/\partial X_j)_{1\le i\le m,1\le
j\le r}(\bfs x)$ of $F_1,\ldots, F_m$ with respect to $X_1,\ldots,
X_r$ at $\bfs x$. We have $\dim\mathcal{T}_{\bfs x}V\ge \dim_{\bfs
x}V$ (see, e.g., \cite[page 94]{Shafarevich94}). The point $\bfs x$
is {\em regular} if $\dim\mathcal{T}_{\bfs x}V=\dim_{\bfs x}V$;
otherwise, $\bfs x$ is called {\em singular}. The set of singular
points of $V$ is the {\em singular locus} of $V$; it is a closed
$\K$--subvariety of $V$. A variety is called {\em nonsingular} if
its singular locus is empty. For projective varieties, the concepts
of tangent space, regular and singular point can be defined by
considering an affine neighborhood of the point under consideration.

Let $V$ and $W$ be irreducibles $\K$--varieties of the same
dimension and let $f:V\to W$ be a regular map. We say that $f$ is
{\em dominant} if $\overline{f(V)}=W$ holds, where $\overline{f(V)}$
denotes the closure of $f(V)$ with respect to the Zariski
$\K$--topology of $W$. Then $f$ induces a ring extension
$\K[W]\hookrightarrow \K[V]$ by composition with $f$. We say that
$f$ is a {\em finite morphism} if this extension is integral, namely
if each element $\eta\in\K[V]$ satisfies a monic equation with
coefficients in $\K[W]$. A basic fact is that a finite morphism is
necessarily closed. Another fact concerning finite morphisms we
shall use in the sequel is that the preimage $f^{-1}(S)$ of an
irreducible closed subset $S\subset W$ is of pure dimension $\dim
S$.
%
%----------------------------------------------------------------
%----------------------------------------------------------------
%----------------------------------------------------------------
%----------------------------------------------------------------
%
\subsection{Rational points}
Let $\Pp^r(\fq)$ be the $r$--dimensional projective space over $\fq$
and $\A^r(\fq)$ the $r$--dimensional $\fq$--vector space $\fq^r$.
For a projective variety $V\subset\Pp^r$ or an affine variety
$V\subset\A^r$, we denote by $V(\fq)$ the set of $\fq$--rational
points of $V$, namely $V(\fq):=V\cap \Pp^r(\fq)$ in the projective
case and $V(\fq):=V\cap \A^r(\fq)$ in the affine case. For an affine
variety $V$ of dimension $n$ and degree $\delta$, we have the
following bound (see, e.g., \cite[Lemma 2.1]{CaMa06}):
\begin{equation}\label{eq: upper bound -- affine gral}
|V(\fq)|\leq \delta\, q^n.
\end{equation}
On the other hand, if $V$ is a projective variety of dimension $n$
and degree $\delta$, then we have the following bound (see
\cite[Proposition 12.1]{GhLa02a} or \cite[Proposition 3.1]{CaMa07};
see \cite{LaRo15} for more precise upper bounds):
\begin{equation*}\label{eq: upper bound -- projective gral}
|V(\fq)|\leq \delta\, p_n,
\end{equation*}
where $p_n:=q^n+q^{n-1}+\cdots+q+1=|\Pp^n(\fq)|$.
%
%----------------------------------------------------------------
%----------------------------------------------------------------
%----------------------------------------------------------------
%----------------------------------------------------------------
%
\subsection{Complete intersections}\label{subsec: complete intersections}
A \emph{set-theoretic complete intersection} is an affine
$\mathbb{K}$-variety $V(F_1 \klk F_m) \subseteq \A^r$ or a
projective $\mathbb{K}$-variety $V(F_1 \klk F_m)\subseteq
\mathbb{P}^r$ defined by $m\le r$ polynomials $F_1 \klk F_m\in
\K[X_1 \klk X_r]$, or homogeneous polynomials $F_1 \klk F_m$ in
$\K[X_0 \klk X_r]$, which is of pure dimension $r-m$. If in addition
$(F_1 \klk F_m)$ is a radical ideal of $\K[X_1 \klk X_r]$ or $\K[X_0
\klk X_r]$, then we say that $V(F_1 \klk F_s)$ is an
\emph{ideal-theoretic complete intersection}. Elements $F_1 \klk
F_m\in \K[X_1 \klk X_r]$ or $\K[X_0 \klk X_r]$ form a \emph{regular
sequence} if the ideal $(F_1 \klk F_m)$ they define in $\K[X_1 \klk
X_r]$ or $\K[X_0 \klk X_r]$ is a proper ideal, $F_1$ is nonzero and,
for $2\le i \le m$, $F_i$ is neither zero nor a zero divisor in
$\K[X_1 \klk X_r]/(F_1 \klk F_{i-1})$ or $\K[X_0 \klk X_r]/(F_1 \klk
F_{i-1})$. In such a case, the variety $V(F_1,\ldots,F_m)$ they
define is a set-theoretic complete intersection. In what follows we
shall use the following result (see, e.g., \cite[Chapitre 3,
Remarque 2.2]{Lejeune84}).
\begin{lemma}\label{lemma: reg seq and complete int}
If $F_1 \klk F_m$ are  $m\le r+1$ homogeneous polynomials in $\K[X_0
\klk X_r]\setminus \K$, the following conditions  are equivalent:
   \begin{itemize}
     \item $F_1 \klk F_m$ define a set-theoretic complete intersection of $\mathbb{P}^r$.
     \item $F_1 \klk F_m$  is a regular sequence in $\K[X_0 \klk X_r]$.
   \end{itemize}
\end{lemma}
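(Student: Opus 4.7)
The proof hinges on two classical facts about the polynomial ring $R:=\K[X_0,\ldots,X_r]$: it is a Cohen--Macaulay integral domain, and in a graded Cohen--Macaulay ring a finite sequence of homogeneous elements in the irrelevant maximal ideal forms a regular sequence if and only if the ideal they generate has height equal to the number of generators. My plan is to translate both conditions in the statement into a single height condition for $I := (F_1,\ldots,F_m)$ and then apply this characterization.

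For the implication ``regular sequence $\Rightarrow$ set-theoretic complete intersection'', I would argue by induction on $i$: since $F_{i+1}$ is a non--zero-divisor modulo $(F_1,\ldots,F_i)$, Krull's principal ideal theorem yields $\dim R/(F_1,\ldots,F_{i+1}) = \dim R/(F_1,\ldots,F_i) - 1$, whence $\dim R/I = r+1-m$. Moreover, the quotient of a Cohen--Macaulay ring by a regular sequence is again Cohen--Macaulay, hence unmixed: every associated prime of $I$ has height exactly $m$. Passing from the affine cone in $\A^{r+1}$ to the projective variety then shows that $V(F_1,\ldots,F_m)\subset\Pp^r$ has pure dimension $r-m$, as required.

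For the converse, suppose $V(F_1,\ldots,F_m)\subset\Pp^r$ has pure dimension $r-m$; the affine cone in $\A^{r+1}$ then has pure dimension $r+1-m$, so $I$ has height $m$ (all its minimal primes have height $m$). I would then show by induction on $m$ that the generators themselves form a regular sequence: since $R$ is a domain and $F_1\in R\setminus\K$, $F_1$ is a non-zero-divisor, and $R/(F_1)$ is a Cohen--Macaulay graded ring of dimension $r$ in which the images $\bar F_2,\ldots,\bar F_m$ generate an ideal of height $m-1$. The main technical hurdle is precisely verifying this last statement: in a Cohen--Macaulay ring the height of an ideal drops by exactly one modulo a non-zero-divisor contained in it, which follows from the identity $\mathrm{depth}(I)=\mathrm{ht}(I)$ characterizing the Cohen--Macaulay property together with the behaviour of depth under quotients by regular elements. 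Once this is granted, the inductive hypothesis identifies $\bar F_2,\ldots,\bar F_m$ as a regular sequence in $R/(F_1)$, which together with $F_1$ lifts to the desired regular sequence in $R$.
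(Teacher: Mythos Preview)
Your argument is correct and is the standard commutative-algebra proof of this equivalence. The paper, however, does not prove this lemma at all: it simply quotes the statement with a reference to \cite[Chapitre~3, Remarque~2.2]{Lejeune84}. So you have supplied what the authors chose to outsource.

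One small streamlining: the induction in your converse direction is not really needed. Once you have established that the affine cone has dimension $r+1-m$, you can invoke directly the fact that in a Cohen--Macaulay graded ring with unique graded maximal ideal (here $R=\K[X_0,\ldots,X_r]$), homogeneous elements $F_1,\ldots,F_m$ in the irrelevant ideal form a regular sequence if and only if $\dim R/(F_1,\ldots,F_m)=\dim R-m$; this is the graded analogue of ``every system of parameters in a Cohen--Macaulay local ring is a regular sequence''. Your inductive unwinding is of course a proof of this fact, so nothing is lost---it is just slightly longer than necessary.
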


If $V\subset\Pp^r$ is a complete intersection defined over $\K$ of
dimension $r-m$, and $F_1 ,\ldots, F_m$ is a system of homogeneous
generators of $I(V)$, the degrees $d_1,\ldots, d_m$ depend only on
$V$ and not on the system of generators. Arranging the $d_i$ in such
a way that $d_1\geq d_2 \geq \cdots \geq d_m$, we call $(d_1,\ldots,
d_m)$ the {\em multidegree} of
$V$. In this case, a stronger version of %the B\'ezout inequality
\eqref{eq: Bezout} holds, called the {\em B\'ezout theorem} (see,
e.g., \cite[Theorem 18.3]{Harris92}):
\begin{equation}\label{eq: Bezout theorem}
\deg V=d_1\cdots d_m.
\end{equation}

A complete intersection $V$ is {\em regular in codimension $k$} if
the singular locus $\mathrm{Sing}(V)$ of $V$ has codimension at
least $k+1$ in $V$, namely $\dim V-\dim \mathrm{Sing}(V)\ge k+1$. In
particular, a complete intersection $V$ is called {\em normal} if it
is regular in codimension 1 (actually, normality is a general notion
that agrees on complete intersections with the one we define here).
A fundamental result for projective complete intersections is the
Hartshorne connectedness theorem (see, e.g., \cite[Theorem
VI.4.2]{Kunz85}): if $V\subset\Pp^r$ is a complete intersection
defined over $\K$ and $W\subset V$ is any $\K$--subvariety of
codimension at least 2, then $V\setminus W$ is connected in the
Zariski topology of $\Pp^r$ over $\K$. Applying the Hartshorne
connectedness theorem with $W:=\mathrm{Sing}(V)$, one deduces the
following result.
\begin{theorem}\label{th: normal complete int implies irred}
    If $V\subset\Pp^r$ is a normal complete intersection, then $V$ is
    absolutely irreducible.
\end{theorem}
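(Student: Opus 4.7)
The plan is to argue by contradiction, combining the Hartshorne connectedness theorem stated just before with a local tangent-space (or local ring) argument showing that the components of a hypothetical decomposition must meet inside the singular locus. All the geometry should be done over $\cfq$, since absolute irreducibility is a statement about $\cfq$-components.

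Suppose, for a contradiction, that $V\subset\Pp^r$ is a normal complete intersection that is not absolutely irreducible. Then $V=V_1\cup V_2$ with $V_1,V_2$ proper closed $\cfq$-subvarieties, each a (nonempty) union of absolutely irreducible components of $V$. Since $V$ is a complete intersection, it has pure dimension $n:=r-m$, hence every irreducible component of both $V_1$ and $V_2$ has dimension $n$. The first step I would carry out is to show the inclusion $V_1\cap V_2\subseteq \mathrm{Sing}(V)$. At a point $\bfs x\in V_1\cap V_2$ the local ring $\mathcal{O}_{V,\bfs x}$ has at least two minimal primes (corresponding to components of $V_1$ and $V_2$ through $\bfs x$), so it is not an integral domain and in particular not a regular local ring; equivalently, the tangent space $\mathcal{T}_{\bfs x}V$ contains the tangent spaces of two distinct $n$-dimensional components and therefore has dimension strictly greater than $n=\dim_{\bfs x}V$. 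Either way, $\bfs x$ is singular on $V$.

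Now I would invoke the normality hypothesis: by definition $V$ is regular in codimension $1$, so $\mathrm{Sing}(V)$ has codimension at least $2$ in $V$, and hence so does $W:=V_1\cap V_2$. Applying the Hartshorne connectedness theorem stated in the excerpt (with $\K=\cfq$) to this $W$ yields that $V\setminus W$ is connected in the Zariski topology. On the other hand, the obvious decomposition
\[
V\setminus W \;=\; (V_1\setminus V_2)\,\sqcup\,(V_2\setminus V_1)
\]
exhibits $V\setminus W$ as a disjoint union of two nonempty relatively open subsets (both are nonempty since $V_1,V_2$ are proper in $V$ and no component of $V_1$ is contained in $V_2$, by the irredundancy of the decomposition into absolutely irreducible components). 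This contradicts the connectedness of $V\setminus W$, so $V$ must be absolutely irreducible.

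The main obstacle, and the only nontrivial ingredient besides Hartshorne connectedness, is the local argument that $V_1\cap V_2\subseteq \mathrm{Sing}(V)$; I would state it via the local ring characterization (a regular local ring is a domain) rather than via tangent spaces, to avoid having to analyze the case where the two $n$-dimensional tangent spaces happen to coincide. Everything else is bookkeeping: ensuring that the decomposition $V=V_1\cup V_2$ is chosen with $V_1\not\subseteq V_2$ and $V_2\not\subseteq V_1$, and that we are working with the $\cfq$-Zariski topology so that the connectedness conclusion directly contradicts the reducibility over $\cfq$.
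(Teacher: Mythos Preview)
Your proposal is correct and follows essentially the same route as the paper: the paper's proof is the one-line remark that the result follows from the Hartshorne connectedness theorem applied with $W:=\mathrm{Sing}(V)$, and you have spelled out exactly that argument (using the smaller $W=V_1\cap V_2\subseteq\mathrm{Sing}(V)$, which works for the same reason). The local-ring justification that points on two components are singular is the standard way to fill in the one step the paper leaves implicit.
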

%
%------------------------------------------------------------------
%------------------------------------------------------------------
%------------------------------------------------------------------
%------------------------------------------------------------------
%------------------------------------------------------------------
%------------------------------------------------------------------
%------------------------------------------------------------------
%------------------------------------------------------------------
%
\section{Smooth symmetric systems}
\label{section: smooth symmetric systems}
For a positive integer $m$, let $X_1 \klk X_m$ be indeterminates
over $\fq$. For $s<m$, we are interested in systems
$F_1=0,\ldots,F_s=0$ defined by polynomials
$F_1,\ldots,F_s\in\fq[X_1,\ldots,X_m]$ which are symmetric in
$X_1,\ldots,X_m$. We express these polynomials in terms of the
elementary symmetric polynomials $\Pi_1 \klk \Pi_m$ of $\fq[X_1 \klk
X_m]$, namely
\begin{equation}\label{def: f}
F_i=G_i(\Pi_1,\ldots,\Pi_m)\quad(1\le i\le s),
\end{equation}
where $G_1\ldots,G_s\in\fq[Y_1,\ldots,Y_m]$ and $Y_1,\ldots,Y_m$ are
new indeterminates over $\fq$.

Let $d_i$ be the degree of $F_i$ for $1\le i\le s$. Write each $F_i$
in the form $F_i=F_i^{(d_i)}+F_i^*$, where $F_i^{(d_i)}$ denotes the
homogeneous component of $F_i$ of degree $d_i$ and $F_i^*$ is the
sum of the homogeneous components of $F_i$ of degree less than
$d_i$. We denote $\bfs d:=(d_1,\ldots,d_s)$. As each $F_i^{(d_i)}$
is symmetric in $X_1,\ldots,X_m$, it can be expressed in the
following way:
\begin{equation}\label{def: f_i^(d_i)}
F_i^{(d_i)}=G_i^{(d_i)}(\Pi_1,\ldots,\Pi_m)\quad(1\le i\le s),
\end{equation}
where $G_1^{(d_1)}\ldots,G_s^{(d_s)}\in\fq[Y_1,\ldots,Y_m]$. We
remark that $G_i^{(d_i)}$ does not necessarily agree with the
homogeneous component of highest degree of $G_i$.

We aim to estimate the number of $\fq$-rational solutions of the
system $F_1=0,\ldots,F_s=0$ and obtain conditions on
$G_1,\ldots,G_s$ which imply that the system $F_1=0,\ldots,F_s=0$
has at least an $\fq$-rational solution. For this purpose, we
consider the affine $\fq$--variety $V:=V(\bfs F_s) \subset \A^m$
defined by $\bfs F_s:=(F_1,\ldots,F_s)$. We shall establish some
facts concerning the geometry of $V$, starting with an analysis of
the singular locus $\Sigma$ of $V$.
%
%------------------------------------------------------------------
%------------------------------------------------------------------
%------------------------------------------------------------------
%------------------------------------------------------------------
%
\subsection{On the singular locus of $V$}
\label{subsec: singular locus of V}
Let $\bfs Y:=(Y_1,\ldots,Y_m)$, $\bfs G_s:=(G_1\ldots,G_s)\in
\fq[\bfs Y]^s$, $\bfs F_s^{(\bfs
d)}:=(F_1^{(d_1)},\ldots,F_s^{(d_s)})$ and $\bfs G_s^{(\bfs
d)}:=(G_1^{(d_1)},\ldots,G_s^{(d_s)})$. Therefore, we have $\bfs
F_s=\bfs G_s(\bfs \Pi)$ and $\bfs F_s^{(\bfs d)}=\bfs G_s^{(\bfs
d)}(\bfs \Pi)$, where $\bfs\Pi:=(\Pi_1\klk \Pi_m)$.
In the sequel, we shall denote the singular locus of $V$ by
$\Sigma$.

Given ${\bfs x}\in \A^m$, we denote by $J\,\bfs\Pi({\bfs x})$ the
Jacobian matrix of $\bfs \Pi$ with respect to $\bfs X:=(X_1 \klk
X_m)$ at $\bfs x$, namely
$$
J\,\bfs\Pi({\bfs x}):=%\left(\frac{\partial {\bfs\Pi}}{\partial \bfs
%X}\right)({\bfs x})=
\left(
\begin{array}{ccc}
\dfrac{\partial \Pi_{1}}{\partial X_1}({\bfs x}) & \cdots &
\dfrac{\partial \Pi_{1}}{\partial X_{m}}({\bfs x})
\\
\vdots & & \vdots
\\
\dfrac{\partial \Pi_{m}}{\partial X_1}({\bfs x})& \cdots &
\dfrac{\partial \Pi_{m}}{\partial X_{m}}({\bfs x})
\end{array}
\right).
$$

We have the following result.
\begin{lemma}\label{lemma: determinant Jacobian Pi}
$\det(J\,\bfs\Pi)=(-1)^{\frac{(m-1)m}{2}} \prod_{1\le i < j\le
m}(X_j-X_i)$.
\end{lemma}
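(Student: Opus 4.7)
The plan is to obtain the identity as a one-line consequence of a matrix factorization derived from differentiating the characteristic polynomial $P(T):=\prod_{i=1}^m(T-X_i)$ in two different ways.

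First, I would exploit the two expressions
\[
P(T)=T^m+\sum_{k=1}^m(-1)^k\Pi_k\,T^{m-k}=(T-X_j)\prod_{i\ne j}(T-X_i).
\]
Differentiating both with respect to $X_j$ and evaluating at $T=X_\ell$ gives, for each pair $(\ell,j)$, the identity
\[
\sum_{k=1}^m(-1)^k\frac{\partial \Pi_k}{\partial X_j}\,X_\ell^{m-k}=-\delta_{\ell j}\prod_{i\ne j}(X_j-X_i),
\]
since the right-hand side vanishes whenever $\ell\ne j$ (the factor $T-X_\ell$ appears). This is a matrix identity $A\cdot J\bfs\Pi=-D$, where $A_{\ell,k}:=(-1)^kX_\ell^{m-k}$ and $D:=\mathrm{diag}\bigl(P'(X_1)\klk P'(X_m)\bigr)$.

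Second, I would compute the auxiliary determinants by reduction to the Vandermonde $V:=\prod_{i<j}(X_j-X_i)$. The matrix $A$ is a reordered and rescaled Vandermonde $\bigl(X_\ell^{k-1}\bigr)_{\ell,k}$: reversing the order of columns contributes a sign $(-1)^{\binom{m}{2}}$, and pulling out the scalars $(-1)^k$ from column $k$ contributes $(-1)^{1+2+\cdots+m}=(-1)^{\binom{m+1}{2}}$. Since $\binom{m}{2}+\binom{m+1}{2}=m^2$, this yields $\det A=(-1)^m V$. For the diagonal matrix, grouping the ordered pairs $(i,j)$ in $\det D=\prod_{j}\prod_{i\ne j}(X_j-X_i)$ as $(X_j-X_i)(X_i-X_j)=-(X_j-X_i)^2$ gives $\det D=(-1)^{\binom{m}{2}}V^2$.

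Third, taking determinants in $A\cdot J\bfs\Pi=-D$ and solving for $\det(J\bfs\Pi)$ yields
\[
\det(J\bfs\Pi)=\frac{(-1)^m\det D}{\det A}=\frac{(-1)^{m+\binom{m}{2}}V^2}{(-1)^m V}=(-1)^{\frac{m(m-1)}{2}}\prod_{1\le i<j\le m}(X_j-X_i),
\]
which is the claimed formula. The whole argument is essentially a single matrix identity; the only delicate part is the careful sign bookkeeping in $\det A$ and $\det D$, which is routine once the two reductions to the standard Vandermonde are isolated as above.
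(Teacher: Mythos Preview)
Your proof is correct. The identity $A\cdot J\bfs\Pi=-D$ follows exactly as you say from differentiating $P(T)=\prod_i(T-X_i)$ and evaluating at the roots, and your sign computations for $\det A$ and $\det D$ are accurate; the division by $V$ is legitimate because you are working in the integral domain $\fq[\bfs X]$ and $V$ is a nonzero polynomial factor on both sides.

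The paper's argument is different in flavor. It invokes the explicit expansion
\[
\frac{\partial \Pi_i}{\partial X_j}=\Pi_{i-1}-X_j\Pi_{i-2}+\cdots+(-1)^{i-1}X_j^{i-1}
\]
to factor $J\bfs\Pi=B_m\cdot A_m$, with $B_m$ lower triangular having diagonal $(1,-1,\ldots,(-1)^{m-1})$ and $A_m=(X_j^{i-1})$ the Vandermonde matrix; then $\det B_m=(-1)^{\binom{m}{2}}$ gives the result immediately. Your approach is arguably slicker in that it never needs this explicit formula for the partial derivatives---everything comes from the single defining relation of the $\Pi_k$---at the cost of somewhat more delicate sign bookkeeping. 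On the other hand, the paper's explicit factorization $J\bfs\Pi=B_m A_m$ is not just a device for this lemma: the same factorization (and its truncations $J\bfs\Pi_{m-k}=B_{m-k}^*A_{m-k}^*$) is reused throughout Section~\ref{section: singular locus F_s not depending on Pi_(m-1),Pi_m} to analyze the rank of submatrices of the Jacobian, so from the paper's point of view it is the more economical choice.
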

\begin{proof}
We have the following well-known equalities (see, e.g.,
\cite{LaPr02}):
\begin{equation}\label{eq: derivative Pi_i}
\frac{\partial \Pi_i}{\partial X_{j}}= \Pi_{i-1}-X_{j} \Pi_{i-2} +
X_{j}^2 \Pi_{i-3} +\cdots+ (-1)^{i-1} X_{j}^{i-1}\quad (1\leq i,j
\leq m).
\end{equation}
Denote by $A_m$ the $(m\times m)$--Vandermonde matrix
$$
A_m:=(X_j^{i-1})_{1\leq i,j\leq m}.
$$
From \eqref{eq: derivative Pi_i} we easily deduce that the Jacobian
matrix $J\,\bfs\Pi$ can be factored as follows:
\begin{equation*} \label{eq: factorization Jacobian elem sim pols}
J\,\bfs\Pi:=B_m\cdot A_m := \left(
\begin{array}{ccccc}
1 & \ 0 & 0 &  \dots & 0
\\
\ \ \Pi_1 &  -1 & 0 &  &
\\
\Pi_2 & \ \ -\Pi_1 & 1 & \ddots & \vdots
\\
\vdots &\vdots  & \vdots & \ddots & 0
\\
\Pi_{m-1} & -\Pi_{m-2} &\Pi_{m-3} & \cdots &\!\! (-1)^{m-1}
\end{array}
\!\!\right) \cdot A_m.
\end{equation*}
Since $\det B_m=(-1)^{\frac{(m-1)m}{2}}$, the conclusion of the
lemma readily follow.
\end{proof}

By the chain rule, the partial derivatives of $F_i(\bf \Pi)$ satisfy
the following equality:
$$\dfrac{\partial G_i(\bf \Pi)}{\partial X_j}  =
\bigg(\dfrac{\partial G_i}{\partial
    Y_1}\circ\bfs{\Pi}\bigg)\cdot\dfrac{\partial \Pi_{1}}{\partial
    X_j}+\bigg(\dfrac{\partial G_i}{\partial
    Y_2}\circ\bfs{\Pi}\bigg)\cdot\dfrac{\partial \Pi_{2}}{\partial
    X_j}+\cdots+\bigg(\dfrac{\partial G_i}{\partial
    Y_m}\circ\bfs{\Pi}\bigg)\cdot \dfrac{\partial \Pi_{m}}{\partial X_j}.
$$
In the sequel we shall assume that $\bfs G_s$ satisfies the
following conditions:
\medskip

\noindent (${\sf A}_1$) $W:=V(\bfs G_s)$ is nonempty, and the
Jacobian matrix $J\bfs G_s(\bfs y)$ of $\bfs G_s$ with respect to
$\bfs Y$ at $\bfs y$ is of full rank for any $\bfs y\in W$.
\medskip

\noindent (${\sf A}_2$) $W^{(\bfs d)}:=V(\bfs G_s^{(\bfs d)})$ is
nonempty, and the Jacobian matrix $J\bfs G_s^{(\bfs d)}(\bfs y)$ of
$\bfs G_s^{(\bfs d)}$ with respect to $\bfs Y$ at $\bfs y$ is of
full rank for any $\bfs y\in W^{(\bfs d)}$.
\medskip

We have the following consequences of assumptions $({\sf A}_1)$ and
$({\sf A}_2)$.
\begin{lemma}\label{lemma: consequences hypothesis A_1}
$W$ and $W^{(\bfs d)}$ are of pure dimension $m-s$, the ideals
generated by $\bfs G_s$ and $\bfs G_s^{(\bfs d)}$ are radical, and
$V$ and $V^{(\bfs d)}:=V(\bfs F_s^{(\bfs d)})$ are of pure dimension
$m-s$.
\end{lemma}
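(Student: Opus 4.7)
The plan is to use the full-rank Jacobian hypothesis $({\sf A}_1)$ (resp.\ $({\sf A}_2)$) to show via the Jacobian criterion that $W$ (resp.\ $W^{(\bfs d)}$) is a smooth variety of pure dimension $m-s$ with radical defining ideal, and then to transfer the pure-dimension statement to $V=\bfs\Pi^{-1}(W)$ (resp.\ $V^{(\bfs d)}=\bfs\Pi^{-1}(W^{(\bfs d)})$) through the finite morphism $\bfs\Pi:\A^m\to\A^m$.

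First, I would establish pure dimension of $W$. The lower bound $\dim_{\bfs y}W\ge m-s$ at every point $\bfs y\in W$ follows from the classical dimension inequality, since $W$ is cut out by $s$ polynomials in $\A^m$. For the upper bound, assumption $({\sf A}_1)$ implies that the kernel $\mathcal{T}_{\bfs y}W$ of $J\bfs G_s(\bfs y)$ has dimension exactly $m-s$, and then the general inequality $\dim_{\bfs y}W\le \dim\mathcal{T}_{\bfs y}W$ recalled in Section~\ref{section: notation, notations} forces $\dim_{\bfs y}W= m-s$ at every point.

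Given pure dimension, I would invoke the Jacobian criterion, together with $({\sf A}_1)$, to conclude that the local ring $(\K[\bfs Y]/(\bfs G_s))_{\mathfrak{m}_{\bfs y}}$ is regular, and in particular reduced, at every $\bfs y\in W$. Since this holds at every closed point of $W$, the ideal $(\bfs G_s)$ is radical. Exactly the same argument applied with $({\sf A}_2)$ yields the conclusions for $W^{(\bfs d)}$ and $(\bfs G_s^{(\bfs d)})$.

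Finally, I would note that the map $\bfs\Pi:\A^m\to\A^m$, $\bfs x\mapsto (\Pi_1(\bfs x)\klk \Pi_m(\bfs x))$, is a finite morphism, since $\K[\bfs X]$ is generated as a $\K[\bfs\Pi]$--module by the $m!$ monomials $X_1^{a_1}\cdots X_m^{a_m}$ with $0\le a_i<i$. The identities $F_i=G_i(\bfs\Pi)$ give $V=\bfs\Pi^{-1}(W)$, so the preimage property of finite morphisms recalled in Section~\ref{section: notation, notations} (preimages of irreducible closed subsets are of pure dimension equal to that of the subset) ensures that $V$ is of pure dimension $m-s$; the analogous conclusion for $V^{(\bfs d)}$ follows identically. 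The main subtlety is the invocation of the Jacobian criterion for radicality, which requires having the correct codimension in hand before concluding smoothness.
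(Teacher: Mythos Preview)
Your proposal is correct and follows essentially the same strategy as the paper: bound the local dimension from above via the full-rank Jacobian and from below via the number of defining equations, then transfer to $V$ through the finite morphism $\bfs\Pi$. The only minor difference is in the radicality step: you argue directly that the local ring $(\cfq[\bfs Y]/(\bfs G_s))_{\mathfrak m_{\bfs y}}$ is regular (hence reduced) at every point, while the paper first observes that the quotient is Cohen--Macaulay and then applies \cite[Theorem 18.15]{Eisenbud95}; both are standard routes to the same conclusion. One small caution: the tangent-space inequality recalled in Section~\ref{section: notation, notations} is stated for generators of $I(W)$, which is not yet known to equal $(\bfs G_s)$; it is cleaner to phrase the upper bound, as the paper does, by citing directly that full Jacobian rank forces $\dim_{\bfs y}W\le m-s$ (e.g.\ \cite[Chapter VI, Proposition 1.5]{Kunz85}), rather than invoking the tangent space of $W$.
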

\begin{proof}
We start with the assertions about $W$, $\bfs G_s$ and $V$. Let
$\bfs y\in W$. As $J\bfs G_s(\bfs y)$ is of full rank, by, e.g.,
\cite[Chapter VI, Proposition 1.5]{Kunz85}, we conclude that the
local dimension $\dim_{\bfs y}W$ of $W$ at $\bfs y$ satisfies the
inequality $\dim_{\bfs y}W\le m-s$. On the other hand, as $W$ is
defined by $s$ polynomials, any irreducible component of $W$ is of
dimension at least $m-s$. It follows that $W$ is of pure dimension
$m-s$.

In particular, by, e.g., \cite[Proposition 18.13]{Eisenbud95}, we
conclude that the quotient ring
%
%Now the second part of hypothesis $({\sf A}_1)$ implies that all the
%points of $W$ are smooth (see, e.g., \cite[Chapter VI, Proposition
%1.5]{Kunz85}). Let
$S:=\cfq[\bfs Y]/(\bfs G_s)$ is Cohen-Macaulay, where $(\bfs G_s)$
is the ideal generated by $\bfs G_s$. Then \cite[Theorem
18.15]{Eisenbud95} implies that $S$ is reduced, and therefore $(\bfs
G_s)$ is a radical ideal.

Finally, we observe that the (surjective) morphism of
$\fq$--varieties $\Pi: \A^m  \rightarrow \A^m$,
$\Pi(\bfs{x}):=(\Pi_1(\bfs{x}),\ldots,\Pi_m(\bfs{x}))$ is a dominant
finite morphism (see, e.g., \cite[\S 5.3, Example
1]{Shafarevich94}). In particular, the preimage $V=\Pi^{-1}(W)$ is
of pure dimension $m-s$.

The assertions on $W^{(\bfs d)}$, $\bfs G_s^{(\bfs d)}$ and
$V^{(\bfs d)}$ follow {\em mutatis mutandis}.
\end{proof}

For $\bfs x\in V$, we have that, if ${\bfs x}\in\Sigma$, then the
Jacobian matrix $J\bfs F_s(\bfs x)$ of $\bfs F_s$ with respect to
$\bfs X$ at $\bfs x$ is rank deficient. As
$$
J\bfs F_s({{\bfs x}})=J\bfs G_s (\boldsymbol{\Pi}({{\bfs x}}))\cdot
J\,\bfs\Pi({{\bfs x}}),
$$
assumption (${\sf A}_1$) implies that $\det(J\,\bfs\Pi({\bfs
x}))=0$. According to Lemma \ref{lemma: determinant Jacobian Pi},
$\det(J\,\bfs\Pi({\bfs x}))=0$ if and only if $\prod_{1\le i < j\le
m}(x_j-x_i)=0$. It follows that, if ${{\bfs x}} \in \Sigma$, then
$x_i=x_j$ for some $1 \leq i < j \leq m$. More precisely, we have
the following result.
\begin{lemma}
Any point $\bfs x\in V$ such that $ J\bfs F_s({{\bfs x}})$ is not of
full rank has at most $m-1$ pairwise distinct coordinates. In
particular, all the points of the singular locus $\Sigma$ of $V$
have at most $m-1$ pairwise distinct coordinates, and we have the
inclusion
$$\Sigma \subset \bigcup_{1 \leq i < j \leq m} \{X_i=X_j\}\cap V.$$
\end{lemma}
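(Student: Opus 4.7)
The plan is to exploit the chain rule factorization of $J\bfs F_s$ together with assumption $({\sf A}_1)$ and Lemma \ref{lemma: determinant Jacobian Pi}. Fix $\bfs x\in V$ and write, by the chain rule,
$$J\bfs F_s(\bfs x)=J\bfs G_s(\bfs\Pi(\bfs x))\cdot J\,\bfs\Pi(\bfs x).$$
Since $\bfs F_s=\bfs G_s(\bfs\Pi)$ vanishes at $\bfs x$, we have $\bfs\Pi(\bfs x)\in W=V(\bfs G_s)$, so assumption $({\sf A}_1)$ yields that the $s\times m$ matrix $J\bfs G_s(\bfs\Pi(\bfs x))$ has rank $s$. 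A standard linear-algebra observation shows that, if the $m\times m$ matrix $J\,\bfs\Pi(\bfs x)$ were invertible, then $J\bfs F_s(\bfs x)$ would also have rank $s$. Contrapositively, if $J\bfs F_s(\bfs x)$ fails to have full rank, then $\det J\,\bfs\Pi(\bfs x)=0$; by Lemma \ref{lemma: determinant Jacobian Pi} this occurs if and only if $x_i=x_j$ for some $1\le i<j\le m$. This proves the first assertion.

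For the claim concerning $\Sigma$, it remains to show that every singular point $\bfs x\in\Sigma$ satisfies $\mathrm{rank}\,J\bfs F_s(\bfs x)<s$. Let $\bfs H=(H_1,\ldots,H_N)$ be a system of generators of $I(V)$, so that the tangent space is $\mathcal{T}_{\bfs x}V=\ker J\bfs H(\bfs x)$. Since each $F_i$ lies in $I(V)$, we may write $F_i=\sum_j c_{ij}H_j$. Differentiating and evaluating at $\bfs x\in V$ (where every $H_j(\bfs x)=0$) gives
$$\nabla F_i(\bfs x)=\sum_{j=1}^N c_{ij}(\bfs x)\,\nabla H_j(\bfs x),$$
so the rows of $J\bfs F_s(\bfs x)$ lie in the row span of $J\bfs H(\bfs x)$. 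Hence $\mathcal{T}_{\bfs x}V\subseteq\ker J\bfs F_s(\bfs x)$. By Lemma \ref{lemma: consequences hypothesis A_1}, $V$ has pure dimension $m-s$, so $\dim_{\bfs x}V=m-s$; as $\bfs x$ is singular, $\dim\mathcal{T}_{\bfs x}V>m-s$, and thus $\dim\ker J\bfs F_s(\bfs x)>m-s$, i.e., $\mathrm{rank}\,J\bfs F_s(\bfs x)<s$. Applying the first part now yields $\bfs x\in\{X_i=X_j\}\cap V$ for some $1\le i<j\le m$, which gives the stated inclusion.

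Nothing in this argument is computationally heavy; the only mildly delicate point is the justification that regularity of $V$ can be detected from the Jacobian of the defining equations $\bfs F_s$ even though $(\bfs F_s)$ need not be equal to $I(V)$ as an ideal. This is handled by the tangent-space inclusion $\mathcal{T}_{\bfs x}V\subseteq\ker J\bfs F_s(\bfs x)$, which uses only that the $F_i$ vanish on $V$ together with the pure-dimensionality of $V$ supplied by Lemma \ref{lemma: consequences hypothesis A_1}.
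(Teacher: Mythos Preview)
Your proof is correct and follows the same approach as the paper: the chain rule factorization $J\bfs F_s(\bfs x)=J\bfs G_s(\bfs\Pi(\bfs x))\cdot J\,\bfs\Pi(\bfs x)$ together with assumption $({\sf A}_1)$ and Lemma~\ref{lemma: determinant Jacobian Pi}. The paper simply asserts that $\bfs x\in\Sigma$ implies $J\bfs F_s(\bfs x)$ is rank deficient; you go further and justify this carefully via the tangent-space inclusion $\mathcal{T}_{\bfs x}V\subseteq\ker J\bfs F_s(\bfs x)$ combined with the pure-dimensionality from Lemma~\ref{lemma: consequences hypothesis A_1}, which is a welcome addition since at this stage it has not yet been established that $(\bfs F_s)$ is radical.
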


Now we discuss significant classes of systems $F_1=0,\ldots,F_s=0$
as in \eqref{def: f} for which the associated variety $V$ has a
singular locus of codimension at least $2$. We consider first the
set of systems $F_1=0,\ldots,F_s=0$ as in \eqref{def: f} not
depending on $\Pi_{m-k+1},\ldots,\Pi_m$ for $1\le i\le s$ and $2\le
k\le m-s$.
%
%------------------------------------------------------------------
%------------------------------------------------------------------
%------------------------------------------------------------------
%------------------------------------------------------------------
%------------------------------------------------------------------
%------------------------------------------------------------------
%------------------------------------------------------------------
%------------------------------------------------------------------
%
\section{Sparse polynomials systems}
\label{section: singular locus F_s not depending on Pi_(m-1),Pi_m}
For $2\le k\le m-s$ and $F_1,\ldots,F_s$ as in \eqref{def: f} not
depending on $\Pi_{m-k+1},\ldots,\Pi_m$, we can write $\bfs F_s=\bfs
G_s({\bfs\Pi}_{m-k})$, where ${\bfs\Pi}_{m-k}:=(\Pi_1\klk
\Pi_{m-k})$. We assume that that assumption $({\sf A}_1)$ holds. If
$\bfs x\in\Sigma$, then the Jacobian matrix
$$
J\bfs F_s({{\bfs x}})=J\bfs G_s (\bfs\Pi_{m-k}({{\bfs x}})) \cdot
J\,\bfs\Pi_{m-k}({{\bfs x}})
$$
is rank deficient, where $J\,\bfs\Pi_{m-k}$ is the Jacobian matrix
of $\bfs\Pi_{m-k}$ with respect to $\bfs X$. Arguing as in the proof
of Lemma \ref{lemma: determinant Jacobian Pi}, we have the
factorization
\begin{align*}
J\,\bfs\Pi_{m-k}=B_{m-k}^*\cdot A_{m-k}^* &:= \left(
\begin{array}{ccccc}
1 &  0 & 0 &  \dots & 0
\\
\Pi_1 &  -1 & 0 &  &
\\
\Pi_2 & -\Pi_1 & 1 & \ddots & \vdots
\\
\vdots &\vdots  & \vdots & \ddots & 0
\\
\Pi_{m-k-1} & -\Pi_{m-k-2} &\Pi_{m-k-3} & \cdots &\!\! (-1)^{m-k-1}
\end{array}
\!\!\right) \cdot A_{m-k}^*,
\\[1ex]
A_{m-k}^*&:=(X_j^{i-1})_{1\leq i\le m-k,1\le j\leq m}.
\end{align*}
As $J\bfs G_s (\bfs\Pi_{m-k}(\bfs x)) \cdot B_{m-k}^*$ is of full
rank, we conclude that $\mathrm{rank}(A_{m-k}^*({\bfs x}))<m-k$. We
have the following result.
\begin{lemma} \label{lemma: singular locus F_s not depending on Pi_(m-k+1),..,Pi_m}
Suppose that $\bfs F_s$ do not depend on $\Pi_{m-k+1},\ldots,\Pi_m$
and assumption $({\sf A}_1)$ holds. Then any point $\bfs x\in V$
such that $ J\bfs F_s({{\bfs x}})$ is not of full rank has at most
$m-k-1$ pairwise distinct coordinates. In particular, all the
elements of the singular locus $\Sigma$ of $V:=V(\bfs F_s)$ have at
most $m-k-1$ pairwise distinct coordinates. Therefore,
\begin{equation}\label{eq: singular locus F_s without Pi(m-1), Pi(m)}
\Sigma\subset\{\bfs x\in V:\mathrm{rank}\big(J\bfs F_s(\bfs
x)\big)<s\}\subset\bigcup_{\mathcal{L}_\mathcal{I}}
\mathcal{L}_\mathcal{I}\cap V,\end{equation}
where $\mathcal{I}:=\{I_1,\ldots,I_{m-k-1}\}$ runs over all the
partitions of $\{1,\ldots,m\}$ into $m-k-1$ nonempty subsets
$I_j\subset \{1,\ldots,m\}$ and $\mathcal{L_{I}}$ is the linear
variety
$$
\mathcal{L}_{\mathcal I}:=
\mathrm{span}(\bfs{v}^{(I_1)},\ldots,\bfs{v}^{(I_{m-k-1})})
  $$
spanned by the vectors $\bfs{v}^{(I_j)}:=
(v_1^{(I_j)},\ldots,v_{m}^{(I_j)})\in\{0,1\}^m$ defined by
$v_l^{(I_j)}:=1$ iff $l\in I_j$.
\end{lemma}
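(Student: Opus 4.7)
The first inclusion in \eqref{eq: singular locus F_s without Pi(m-1), Pi(m)} is a direct consequence of the Jacobian criterion together with Lemma \ref{lemma: consequences hypothesis A_1}: since $V$ has pure dimension $m-s$ and is defined by the $s$ polynomials $F_1,\ldots,F_s$, a point $\bfs x\in V$ is regular if and only if $\mathcal{T}_{\bfs x}V$ has dimension $m-s$, i.e.\ if and only if $\mathrm{rank}\bigl(J\bfs F_s(\bfs x)\bigr)=s$. Points of $\Sigma$ therefore satisfy $\mathrm{rank}\bigl(J\bfs F_s(\bfs x)\bigr)<s$.

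To handle the second inclusion, I would start from the factorization $J\bfs F_s(\bfs x)=J\bfs G_s(\bfs\Pi_{m-k}(\bfs x))\cdot B_{m-k}^*(\bfs x)\cdot A_{m-k}^*(\bfs x)$ already displayed just before the statement. The matrix $B_{m-k}^*(\bfs x)$ is a square $(m-k)\times(m-k)$ lower-triangular matrix with entries $\pm 1$ on the diagonal, hence invertible. Under assumption $({\sf A}_1)$, the Jacobian $J\bfs G_s(\bfs\Pi_{m-k}(\bfs x))$ has full row rank $s$, so the product $J\bfs G_s(\bfs\Pi_{m-k}(\bfs x))\cdot B_{m-k}^*(\bfs x)$ still has full row rank $s$. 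By the Sylvester rank inequality, if $\mathrm{rank}\bigl(J\bfs F_s(\bfs x)\bigr)<s$ then necessarily $\mathrm{rank}\bigl(A_{m-k}^*(\bfs x)\bigr)<m-k$.

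Next I would analyze the $(m-k)\times m$ truncated Vandermonde matrix
$$
A_{m-k}^*(\bfs x)=\bigl(x_j^{i-1}\bigr)_{1\le i\le m-k,\,1\le j\le m}.
$$
If $\bfs x$ has exactly $r$ pairwise distinct coordinate values, then any two columns $j,j'$ with $x_j=x_{j'}$ coincide, so the column rank equals the rank of the $(m-k)\times r$ Vandermonde submatrix obtained by keeping one representative per level set. By the classical Vandermonde argument this rank equals $\min(r,m-k)$. Hence $\mathrm{rank}\bigl(A_{m-k}^*(\bfs x)\bigr)<m-k$ forces $r\le m-k-1$, which proves the first assertion of the lemma and, together with the previous paragraph, shows the inclusion $\Sigma\subset\{\bfs x\in V:\mathrm{rank}(J\bfs F_s(\bfs x))<s\}$ lands in the set of points with at most $m-k-1$ distinct coordinates.

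It remains to encode this condition geometrically. Given $\bfs x\in\A^m$ with level sets $J_1,\ldots,J_r$ and common values $c_1,\ldots,c_r$, where $r\le m-k-1$, I would refine the partition $\{J_1,\ldots,J_r\}$ into a partition $\mathcal{I}=\{I_1,\ldots,I_{m-k-1}\}$ by successively splitting any $J_l$ with $|J_l|\ge 2$ into two nonempty pieces; such an $l$ exists as long as the number of parts is strictly less than $m\ge m-k-1$. On each $I_j$, $\bfs x$ is still constant (say equal to $c'_j$), and therefore
$$
\bfs x=\sum_{j=1}^{m-k-1}c'_j\,\bfs v^{(I_j)}\in\mathcal{L}_{\mathcal{I}},
$$
which yields the claimed inclusion. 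The only non-routine ingredient is the rank identity for the truncated Vandermonde $A_{m-k}^*(\bfs x)$; the rest is elementary linear algebra and combinatorics of partitions.
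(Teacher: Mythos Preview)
Your proof is correct and follows essentially the same approach as the paper's own proof. Both reduce to showing that $\mathrm{rank}\bigl(A_{m-k}^*(\bfs x)\bigr)<m-k$ forces fewer than $m-k$ distinct coordinates; the paper does this by contradiction (exhibiting a nonzero $(m-k)\times(m-k)$ Vandermonde minor when $m-k$ distinct values are present), while you compute the rank directly as $\min(r,m-k)$, and you spell out the partition-refinement step for the final inclusion that the paper leaves implicit.
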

\begin{proof}
Let $\bfs x\in\Sigma$. Then $\mathrm{rank}(A_{m-k}^*({\bfs
x}))<m-k$. Now suppose that $\bfs x$ has $m-k$ pairwise distinct
coordinates. Without loss of generality we may suppose that $\bfs
x:=(x_1,\ldots,x_m)$ and $x_i\not=x_j$ for $1\le i<j\le m-k$.
Observe that the $(m-k)\times(m-k)$-minor of $A_{m-k}^*({\bfs x})$
consisting of the first $m-k$ columns of $A_{m-k}^*({\bfs x})$ is
the Vandermonde $(m-k)\times(m-k)$-matrix
${A}_{m-k}:=(x_j^{i-1})_{1\leq i,j\le m-k}$, whose determinant is
$\prod_{1\leq i<j\le m-k}(x_i-x_j)\not=0$. This implies that
$A_{m-k}^*({\bfs x})$ has rank $m-k$, which contradicts the
condition $\mathrm{rank}(A_{m-k}^*({\bfs x}))<m-k$. The statement of
the lemma readily follows.
\end{proof}

In what follows, we shall use the following weaker consequence of
Lemma \ref{lemma: singular locus F_s not depending on
Pi_(m-k+1),..,Pi_m}: all the elements in the singular locus $\Sigma$
of $V$ have at most $m-k$ pairwise distinct coordinates. Let
$\mathcal{I}:=\{I_1,\ldots,I_{m-k}\}$ be a partition of
$\{1,\ldots,m\}$. Assume without loss of generality that $i\in I_i$
for $1\le i\le m-k$. Consider the mapping
$$\begin{array}{rcl}
\bfs\Pi_{m-k,\mathcal{I}}:\A^{m-k}&\to&\A^{m-k},\\
(x_1,\ldots,x_{m-k})&\mapsto&
\bfs\Pi_{m-k}(x_1\bfs{v}^{(I_1)}+\cdots+x_{m-k}\bfs{v}^{(I_{m-k})}),
\end{array}$$
with $\bfs{v}^{(I_1)},\ldots,\bfs{v}^{(I_{m-k})}$ as in the
statement of the lemma. We have that any $\bfs x\in\Sigma$ belongs
to the intersection $\mathcal{L}_\mathcal{I}\cap V$ for a suitable
partition $\mathcal{I}$ of $\{1,\ldots,m\}$. The set
$\mathcal{L}_{\mathcal{I}}\cap V$ is isomorphic to the set
$$\{\bfs x\in\A^{m-k}:\bfs G_s(\bfs\Pi_{m-k,\mathcal{I}}(\bfs x))=\bfs 0\}.$$
Denote $L_{\mathcal{I}}:\A^{m-k}\to\A^m$,
$L_{\mathcal{I}}(x_1,\ldots,x_{m-k}):=x_1\bfs{v}^{(I_1)}+\cdots+x_{m-k}\bfs{v}^{(I_{m-k})}$.
Observe that $\bfs\Pi_{m-k,\mathcal{I}}=\bfs\Pi_{m-k}\circ
L_{\mathcal{I}}$.
\begin{proposition}\label{prop: Phi_i is a finite morphism}
For $k\ge 1$, if $\mathrm{char}(\fq)$ does not divide
$m(m-1)\cdots(m-k+1)$, then $\bfs\Pi_{m-k,\mathcal{I}}$ is a finite
morphism.
\end{proposition}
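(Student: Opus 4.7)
The plan is to exploit that each component of $\bfs\Pi_{m-k,\mathcal{I}}$ is a homogeneous polynomial of positive degree in $X_1 \klk X_{m-k}$: the $i$-th component $P_i := \Pi_i \circ L_{\mathcal{I}}$ is the $i$-th elementary symmetric polynomial of the multiset in which $X_j$ appears $n_j := |I_j|$ times, and is therefore homogeneous of degree $i$. By the standard graded Nullstellensatz argument, $\cfq[X_1 \klk X_{m-k}]$ is a finite module over $\cfq[P_1 \klk P_{m-k}]$, and hence $\bfs\Pi_{m-k,\mathcal{I}}$ is a finite morphism, as soon as $V(P_1 \klk P_{m-k}) = \{\bfs 0\}$ in $\A^{m-k}$. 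The proof therefore reduces to this fiber computation.

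So fix $\bfs x := (x_1 \klk x_{m-k})$ in the common zero locus of $P_1 \klk P_{m-k}$ and introduce the auxiliary polynomial $Q(T) := \prod_{j=1}^{m-k}(T-x_j)^{n_j}$, of degree $m$. Its coefficients of $T^{m-1} \klk T^k$ are, up to sign, $P_1(\bfs x) \klk P_{m-k}(\bfs x) = 0$, so $Q(T) = T^m + R(T)$ with $\deg R \le k-1$. Differentiating $k$ times kills $R$ and leaves $Q^{(k)}(T) = m(m-1)\cdots(m-k+1)\, T^{m-k}$; by the characteristic hypothesis this is a nonzero scalar multiple of $T^{m-k}$, so $T=0$ is the unique root of $Q^{(k)}$.

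Now let $z_1 \klk z_r$ be the distinct values in $\{x_1 \klk x_{m-k}\}$ and set $N_s := \sum_{j:\,x_j=z_s} n_j$, so that $r \le m-k$, $\sum_s N_s = m$, and $Q(T) = \prod_s (T-z_s)^{N_s}$. If $z_s \neq 0$ and $N_s \ge k+1$, then $z_s$ remains a root of $Q^{(k)}$, contradicting the previous paragraph; hence every nonzero $z_s$ satisfies $N_s \le k$. Independently, the vanishing $P_1(\bfs x) = \cdots = P_{m-k}(\bfs x) = 0$ combined with Newton's identities applied to $Q$ immediately yields $p_\ell := \sum_s N_s z_s^\ell = 0$ for $\ell = 1 \klk m-k$. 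Restricting this linear system (in the scalars $N_s \in \cfq$) to the at-most-$(m-k)$ nonzero $z_s$'s, the coefficient matrix factors as a Vandermonde matrix times the invertible diagonal matrix $\mathrm{diag}(z_s)$, so it has full column rank; hence $N_s = 0$ in $\cfq$, i.e., $\mathrm{char}(\fq) \mid N_s$, for every nonzero $z_s$.

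The decisive arithmetic step is the observation that the hypothesis $\mathrm{char}(\fq) \nmid m(m-1)\cdots(m-k+1) = k!\binom{m}{k}$ forces $\mathrm{char}(\fq) > k$, since $k!$ is divisible by every prime $\le k$. Combining this with the bounds $\mathrm{char}(\fq) \mid N_s$ and $N_s \le k$ on a putative nonzero $z_s$ gives $\mathrm{char}(\fq) \le k$, a contradiction. Hence no nonzero $z_s$ exists and $\bfs x = \bfs 0$, completing the fiber computation. I expect the main technical hurdle to be precisely this calibration of the two independent bounds on $N_s$; the identity $m(m-1)\cdots(m-k+1) = k!\binom{m}{k}$ is the arithmetic input that makes the hypothesis just strong enough to close the argument.
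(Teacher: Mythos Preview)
Your proof is correct and takes a genuinely different route from the paper's. The paper proceeds by first noting that $\cfq[\bfs\Pi_{m,\mathcal{I}}]\hookrightarrow\cfq[X_1,\ldots,X_{m-k}]$ is integral (via the universal equation $X_i^m+\sum_j(-1)^j\Pi_jX_i^{m-j}=0$), then reducing to the integrality of $\cfq[\bfs\Pi_{m-k,\mathcal{I}}]\to\cfq[\bfs\Pi_{m,\mathcal{I}}]$. For the latter it observes that the polynomial $f_{L_\mathcal{I}(\bfs x)}$ has $\gcd(f,f')$ of degree at least $k$, so the first $k$ subdiscriminants vanish identically on the image of $\bfs\Pi_{m,\mathcal{I}}$, and then invokes the Appendix result (Theorem~\ref{th: quotient by subdiscs is integral}) that $\K[\bfs\Pi_{m-k}]\hookrightarrow\K[\bfs\Pi_m]/(\mathrm{sDisc}_0,\ldots,\mathrm{sDisc}_{k-1})$ is integral. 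Dominance is handled separately via the theorem on dimension of fibers.

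Your approach is considerably more elementary and self-contained: exploiting homogeneity to reduce finiteness to the single fiber computation $V(P_1,\ldots,P_{m-k})=\{\bfs 0\}$, then settling that by the two independent constraints $N_s\le k$ (from $Q^{(k)}=m(m-1)\cdots(m-k+1)T^{m-k}$) and $p\mid N_s$ (from Newton--Vandermonde), with the arithmetic identity $m(m-1)\cdots(m-k+1)=k!\binom{m}{k}$ supplying exactly $p>k$ to close the gap. This bypasses the entire subdiscriminant machinery and the Appendix; dominance also comes for free from algebraic independence of the $P_i$. The paper's route, on the other hand, embeds the statement into the broader subdiscriminant framework that is reused elsewhere (e.g., in the study of discriminant loci), so its detour is not wasted in the context of the full paper.
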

\begin{proof}
Denote $\Pi_{i,\mathcal{I}}
:=\Pi_i(X_1\bfs{v}^{(I_1)}+\cdots+X_{m-k}\bfs{v}^{(I_{m-k})})$ for
$1\le i\le m$ and $\bfs\Pi_{m,\mathcal{I}}:=\bfs\Pi_m\circ
L_{\mathcal{I}}$. We have to prove that the ring homomorphism
$\cfq[\bfs\Pi_{m-k,\mathcal{I}}]\to \cfq[X_1,\ldots,X_{m-k}]$ is
integral. Substituting
$X_1\bfs{v}^{(I_1)}+\cdots+X_{m-k}\bfs{v}^{(I_{m-k})}$ for
$(X_1,\ldots,X_m)$ in the identity
$$X_i^m+\sum_{j=1}^m(-1)^j\Pi_jX_i^{m-j}=0$$
for $1\le i\le m-k$ we conclude that the ring  homomorphism
$\cfq[\bfs\Pi_{m,\mathcal{I}}]\hookrightarrow
\cfq[X_1,\ldots,X_{m-k}]$ is integral. As a consequence, it suffices
to show that the ring  homomorphism
$$\cfq[\bfs\Pi_{m-k,\mathcal{I}}]\to
\cfq[\bfs\Pi_{m,\mathcal{I}}]=\cfq[\bfs\Pi_{m-k,\mathcal{I}}][\Pi_{m-k+1,\mathcal{I}},\ldots,\Pi_{m,\mathcal{I}}]$$
is integral.

For this purpose, we exhibit $k$ polynomials vanishing on the image
of the mapping $\bfs\Pi_{m,\mathcal{I}}$. Denote $\bfs
X:=(X_1,\ldots,X_m)$, $\bfs X_{m-k}=(X_1,\ldots,X_{m-k})$ and
$f_{\bfs X}:=(T-X_1)\cdots(T-X_m)$. To describe these $k$
polynomials, in what follows we identify the elements of
$\bfs\Pi_{m,\mathcal{I}}(\bfs x_{m-k})=\bfs\Pi_m\circ
L_{\mathcal{I}}(\bfs x_{m-k})$ with $\bfs x_{m-k}\in\A^{m-k}$ with
monic polynomials of $\cfq[T]$ of degree $m$ by mapping each
$\bfs\Pi_{m,\mathcal{I}}(\bfs x)$ to the polynomial
\begin{equation}\label{eq: definition f_x}
f_{L_\mathcal{I}(\bfs x_{m-k})}:=T^m+ \sum_{j=1}^m(-1)^j \Pi_j\big(
L_\mathcal{I}(\bfs x_{m-k})\big)T^{m-j}.
\end{equation}
For any $f_{L_\mathcal{I}(\bfs x_{m-k})}$ and $0\le j<k$, let
$\mathrm{sDisc}_j(f_{L_\mathcal{I}(\bfs
x_{m-k})}):=\mathrm{sRes}_j(f_{L_\mathcal{I}(\bfs
x_{m-k})},f'_{L_\mathcal{I}(\bfs x_{m-k})})$ denote the $j$th
subdiscriminant of $f_{L_\mathcal{I}(\bfs x_{m-k})}$, namely the
$j$th subresultant of $f_{L_\mathcal{I}(\bfs x_{m-k})}$ and its
derivative $f'_{L_\mathcal{I}(\bfs x_{m-k})}$ (see Appendix
\ref{section: appendix} for details). Observe that
$$\mathrm{sDisc}_j(f_{L_\mathcal{I}(\bfs
x_{m-k})})=\mathrm{sDisc}_j(f_{\bfs X})\circ L_\mathcal{I}(\bfs
x_{m-k})\textrm{ for }0\le j<k.$$

By definition, for any $\bfs
x_{m-k}:=(x_1,\ldots,x_{m-k})\in\A^{m-k}$ the roots of $f_{\bfs
x_{m-k}}$ in $\cfq$ are $x_1,\ldots,x_{m-k}$, with multiplicities at
least $|I_1|,\ldots,|I_{m-k}|$. This implies that the degree of
$\gcd(f_{L_\mathcal{I}(\bfs x_{m-k})},f'_{L_\mathcal{I}(\bfs
x_{m-k})})$ is at least $|I_1|+\cdots+|I_{m-k}|-(m-k)=k$. By, e.g.,
\cite[Proposition 4.24]{BaPoRo06}, we conclude that
\begin{equation}\label{eq: identity disc-subdisc f_x=0}
\mathrm{sDisc}_j(f_{L_\mathcal{I}(\bfs x_{m-k})})=0\textrm{ for
}0\le j<k.
\end{equation}
Considering the generic subdiscriminants
$\mathrm{sDisc}_j(f_{L_\mathcal{I}(\bfs X_{m-k})})$ for $0\le j<k$
as elements of $\cfq[\bfs\Pi_{m,\mathcal{I}}]$, according to
\eqref{eq: identity disc-subdisc f_x=0} we have
$$\mathrm{sDisc}_j(f_{L_\mathcal{I}(\bfs X_{m-k})})=\mathrm{sDisc}_j(f_{\bfs X})\circ L_\mathcal{I}(\bfs
X_{m-k})=0\textrm{ for }0\le j<k$$
in $\cfq[\bfs\Pi_{m,\mathcal{I}}]$. From Theorem \ref{th: quotient
by subdiscs is integral} we deduce that
\begin{align*}
\cfq[\bfs\Pi_{m-k,\mathcal{I}}]&\to
\cfq[\bfs\Pi_{m,\mathcal{I}}]/(\mathrm{sDisc}_j(f_{L_\mathcal{I}(\bfs
X_{m-k})}):0\le j<k)=\cfq[\bfs\Pi_{m,\mathcal{I}}]
\end{align*}
is integral.

Now we show that $\bfs\Pi_{m-k,\mathcal{I}}$ is dominant. Let $\bfs
x\in\A^{m-k}$. The fact that the ring homomorphism
$\cfq[\bfs\Pi_{m-k,\mathcal{I}}]\to \cfq[\bfs X_{m-k}]$ is integral
implies that the fiber
$(\bfs\Pi_{m-k,\mathcal{I}})^{-1}(\bfs\Pi_{m-k,\mathcal{I}}(\bfs
x))$ is finite. By the Theorem of the dimension of fibers (see,
e.g., \cite[\S 6.3, Theorem 7]{Shafarevich94}) we conclude that
$\dim\A^{m-k}=m-k\le
\dim\overline{\bfs\Pi_{m-k,\mathcal{I}}(\A^{m-k})}$. It follows that
$\dim\overline{\bfs\Pi_{m-k,\mathcal{I}}(\A^{m-k})}=m-k$, which
implies that
$\overline{\bfs\Pi_{m-k,\mathcal{I}}(\A^{m-k})}=\A^{m-k}$.
\end{proof}

Now we are able to establish the main result of this section.
\begin{theorem}\label{th: singular locus for F_s without Pi(m-1), Pi(m)}
Let $k\ge 2$ and suppose that $\mathrm{char}(\fq)$ does not divide
$m(m-1)\cdots(m-k+1)$. Let $\bfs F_s$ be polynomials as in
\eqref{def: f} not depending on $\Pi_{m-k+1},\ldots,$ $\Pi_m$ such
that assumption $({\sf A}_1)$ holds. Then the set of points $\bfs
x\in V:=V(\bfs F_s)\subset\A^m$ such that $ J\bfs F_s({{\bfs x}})$
is rank deficient has dimension at most $m-s-k$. In particular, the
singular locus $\Sigma$ of $V$ has dimension at most $m-s-k$.
\end{theorem}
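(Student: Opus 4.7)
The idea is to reduce the rank-deficient locus of $J\bfs F_s$ to a finite union of slices that can be pulled back through the finite morphism of Proposition~\ref{prop: Phi_i is a finite morphism}, and then read off the dimension from the variety $\widetilde W\subset\A^{m-k}$ cut out by $\bfs G_s$. Since $\Sigma$ is contained in the rank-deficient locus of $J\bfs F_s$, it suffices to bound the dimension of the latter. By the weaker consequence of Lemma~\ref{lemma: singular locus F_s not depending on Pi_(m-k+1),..,Pi_m} highlighted in the text, every such point has at most $m-k$ pairwise distinct coordinates, and therefore lies in some linear subspace $\mathcal{L}_{\mathcal I}$ attached to a partition $\mathcal I=\{I_1,\ldots,I_{m-k}\}$ of $\{1,\ldots,m\}$ into $m-k$ nonempty blocks. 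There are only finitely many such partitions, so it suffices to show that $\dim(\mathcal{L}_{\mathcal I}\cap V)\le m-s-k$ for each fixed $\mathcal I$.

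Fix such an $\mathcal I$. The map $L_{\mathcal I}\colon\A^{m-k}\to\A^m$ is a linear embedding onto $\mathcal{L}_{\mathcal I}$, and since $\bfs G_s$ does not depend on $Y_{m-k+1},\ldots,Y_m$, the factorization $\bfs F_s\circ L_{\mathcal I}=\bfs G_s\circ\bfs\Pi_{m-k,\mathcal I}$ noted before Proposition~\ref{prop: Phi_i is a finite morphism} yields
\[
L_{\mathcal I}^{-1}(\mathcal{L}_{\mathcal I}\cap V)=\bfs\Pi_{m-k,\mathcal I}^{-1}(\widetilde W),
\]
where $\widetilde W\subset\A^{m-k}$ is the affine variety defined by $\bfs G_s$ viewed as $s$ polynomials in $Y_1,\ldots,Y_{m-k}$. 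Because $W=\widetilde W\times\A^k$ has pure dimension $m-s$ by Lemma~\ref{lemma: consequences hypothesis A_1}, we conclude that $\widetilde W$ has pure dimension $(m-k)-s=m-s-k$.

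It remains to apply Proposition~\ref{prop: Phi_i is a finite morphism}, which under the hypothesis on $\mathrm{char}(\fq)$ provides that $\bfs\Pi_{m-k,\mathcal I}\colon\A^{m-k}\to\A^{m-k}$ is a finite morphism. Since the preimage of an irreducible closed subset under a finite morphism has pure dimension equal to that of the subset (recalled at the end of Section~\ref{section: notation, notations}), applying this to each irreducible component of $\widetilde W$ yields that $\bfs\Pi_{m-k,\mathcal I}^{-1}(\widetilde W)$ is of pure dimension $m-s-k$. Through the linear isomorphism $L_{\mathcal I}$ this transfers to $\mathcal{L}_{\mathcal I}\cap V$, and taking the finite union over $\mathcal I$ yields the claimed bound for the rank-deficient locus and hence for $\Sigma$. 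The substantive work has already been absorbed into Proposition~\ref{prop: Phi_i is a finite morphism}, whose proof relies on the subdiscriminant identities developed in the appendix to produce $k$ integral relations; given that input, the present argument is a routine dimension count with no further obstacle beyond carefully tracking the identifications.
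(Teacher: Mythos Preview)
Your proof is correct and follows essentially the same approach as the paper: reduce via Lemma~\ref{lemma: singular locus F_s not depending on Pi_(m-k+1),..,Pi_m} (in its weakened form) to bounding $\dim(\mathcal{L}_{\mathcal I}\cap V)$ for partitions into $m-k$ blocks, identify this intersection with $\bfs\Pi_{m-k,\mathcal I}^{-1}(\widetilde W)$, and invoke Proposition~\ref{prop: Phi_i is a finite morphism} together with the dimension of $\widetilde W$ coming from $({\sf A}_1)$. The only cosmetic difference is that you deduce $\dim\widetilde W=m-s-k$ via the product decomposition $W=\widetilde W\times\A^k$ and Lemma~\ref{lemma: consequences hypothesis A_1}, whereas the paper appeals directly to $({\sf A}_1)$ on $\A^{m-k}$; these are the same argument.
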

\begin{proof}
According to \eqref{eq: singular locus F_s without Pi(m-1), Pi(m)},
it suffices to show that $\mathcal{L}_\mathcal{I}\cap V$ has
dimension at most $m-s-k$ for any partition $\mathcal{I}$ of
$\{1,\ldots,m\}$ into $m-k$ subsets. Therefore, fix a partition
$\mathcal{I}$ as above. As $\bfs G_s$ does not depend on
$Y_{m-k+1},\ldots,Y_m$, we may consider $G_1,\ldots,G_s$ as elements
of $\fq[Y_1,\ldots,Y_{m-k}]$. In this sense, the set
$\mathcal{L}_\mathcal{I}\cap V$ is isomorphic to the set
$$\{\bfs x\in\A^{m-k}:\bfs G_s(\bfs\Pi_{m-k,\mathcal{I}}(\bfs x))=\bfs 0\}.$$
Assumption $({\sf A}_1)$ implies that $\{\bfs G_s=\bfs
0\}\subset\A^{m-k}$ is of pure dimension $m-s-k$. According to
Proposition \ref{prop: Phi_i is a finite morphism},
$\bfs\Pi_{m-k,\mathcal{I}}$ is a finite morphism. It follows that
$$\bfs\Pi_{m-k,\mathcal{I}}^{-1}(\{\bfs G_s=\bfs 0\})=\{\bfs x\in\A^{m-k}:\bfs G_s(\bfs\Pi_{m-k,\mathcal{I}}(\bfs x))=\bfs 0\}$$
is of pure dimension $m-s-k$.
\end{proof}
%
%------------------------------------------------------------------
%------------------------------------------------------------------
%------------------------------------------------------------------
%------------------------------------------------------------------
%
\subsection{The geometry of the projective closure}
Consider the embedding of $\A^m$ into the projective space $\Pp^m$
defined by the mapping $(x_1,\ldots, x_m)\mapsto(1:x_1:\dots:x_m)$.
The closure $\mathrm{pcl}(V)\subset\Pp^m$ of the image of $V$ under
this embedding in the Zariski topology of $\Pp^m$ is called the {\em
projective closure} of $V$. The points of $\mathrm{pcl}(V)$ lying in
the hyperplane $\{X_0=0\}$ are called the {\em points of
$\mathrm{pcl}(V)$ at infinity}.

We also need information concerning the behavior of $V$ at
``infinity''. For this purpose, we consider the {\em singular locus
$\Sigma^{\infty}\subset\mathbb{P}^{m}$ of $\mathrm{pcl}(V)$ at
infinity}, namely the set of singular points of $\mathrm{pcl}(V)$
lying in the hyperplane $\{X_0=0\}$. We have the following result.
\begin{lemma}\label{lemma: singular locus at infinity without Pi_(m-1),Pi_m}
Suppose that $\mathrm{char}(\fq)$ does not divide
$m(m-1)\cdots(m-k+1)$, and let $\bfs F_s$ be as in \eqref{def: f}
satisfying assumption $({\sf A_2})$, not depending on
$\Pi_{m-k+1},\ldots,$ $\Pi_m$. Denote by $\bfs F_s^{(\bfs d)}$ the
vector of homogeneous components of highest degree of $\bfs F_s$.
Then the set $\{\bfs x\in V(\bfs F^{(\bfs d)}): \textrm{rank
}\big(J(\bfs F_s^{(\bfs d)})(\bfs x)\big)<s\}\subset\A^m$ has
dimension at most $m-s-k$. In particular, the singular locus
$\Sigma^{\infty}$ of $\mathrm{pcl}(V)$ at infinity has dimension at
most $m-s-k-1$.
\end{lemma}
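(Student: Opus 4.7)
The plan is to reduce the statement to Theorem \ref{th: singular locus for F_s without Pi(m-1), Pi(m)} applied to the top-degree forms $\bfs F_s^{(\bfs d)}$, and then pass to the hyperplane at infinity by a standard projectivization argument.

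First I would observe that $\bfs F_s^{(\bfs d)}$ falls in the scope of that theorem. Indeed, since $\bfs F_s=\bfs G_s(\Pi_1,\ldots,\Pi_{m-k})$ and each $\Pi_j$ is $\bfs X$-homogeneous of degree $j$, the component of $F_i$ of top degree in $\bfs X$ equals $G_i^{(d_i)}(\Pi_1,\ldots,\Pi_{m-k})$, where $G_i^{(d_i)}$ is the isobaric component of $G_i$ of maximal weight (with $\wt(Y_j)=j$). In particular, $\bfs F_s^{(\bfs d)}$ does not depend on $\Pi_{m-k+1},\ldots,\Pi_m$, and assumption $({\sf A}_2)$ for $\bfs G_s$ is literally assumption $({\sf A}_1)$ for $\bfs G_s^{(\bfs d)}$. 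The characteristic hypothesis is the same, so Theorem \ref{th: singular locus for F_s without Pi(m-1), Pi(m)} applies and yields that the set
\[
A:=\{\bfs x\in V(\bfs F_s^{(\bfs d)}):\mathrm{rank}(J\bfs F_s^{(\bfs d)}(\bfs x))<s\}\subset\A^m
\]
has dimension at most $m-s-k$, which is the first assertion.

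For the bound on $\Sigma^\infty$ I would identify $\{X_0=0\}\subset\Pp^m$ with $\Pp^{m-1}$, let $\pi\colon\A^m\setminus\{\bfs 0\}\to\Pp^{m-1}$ be the canonical projection, and prove that $\Sigma^\infty\subset\pi(A\setminus\{\bfs 0\})$. Given $P=(0{:}x_1{:}\cdots{:}x_m)\in\Sigma^\infty$, set $\bfs x=(x_1,\ldots,x_m)$. Since $F_i^h(0,X_1,\ldots,X_m)=F_i^{(d_i)}$, we have $\bfs x\in V(\bfs F_s^{(\bfs d)})$. By Lemma \ref{lemma: consequences hypothesis A_1} both $V$ and $V^{(\bfs d)}$ are of pure dimension $m-s$; a dimension count (any component of $V(F_1^h,\ldots,F_s^h)$ contained entirely in $\{X_0=0\}$ would lie in the projectivization of $V^{(\bfs d)}$, which has dimension $m-s-1$) then shows that $V(F_1^h,\ldots,F_s^h)=\mathrm{pcl}(V)$, so $\mathrm{pcl}(V)$ is a set-theoretic complete intersection of pure dimension $m-s$ defined by $F_1^h,\ldots,F_s^h$. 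The Jacobian criterion for smoothness yields $\mathrm{rank}(\partial F_i^h/\partial X_j)_{0\le j\le m}(P)<s$. Dropping the $X_0$-column only decreases the rank, and $\partial F_i^h/\partial X_j(0,x_1,\ldots,x_m)=\partial F_i^{(d_i)}/\partial X_j(\bfs x)$ for $j\ge 1$, so $\mathrm{rank}(J\bfs F_s^{(\bfs d)}(\bfs x))<s$, i.e., $\bfs x\in A$ and $P=\pi(\bfs x)$.

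To conclude, since $\bfs F_s^{(\bfs d)}$ is homogeneous (as are the maximal minors of its Jacobian), the set $A$ is a cone in $\A^m$, so $\dim\pi(A\setminus\{\bfs 0\})\le\dim A-1\le m-s-k-1$, and the inclusion $\Sigma^\infty\subset\pi(A\setminus\{\bfs 0\})$ gives $\dim\Sigma^\infty\le m-s-k-1$. The most delicate step in this plan is the one at infinity, namely verifying that $F_1^h,\ldots,F_s^h$ define $\mathrm{pcl}(V)$ set-theoretically (and not a strictly larger variety with spurious components at infinity), so that the Jacobian criterion may be invoked; this is handled by the purity assertion of Lemma \ref{lemma: consequences hypothesis A_1} applied to $V^{(\bfs d)}$ combined with the principal ideal theorem.
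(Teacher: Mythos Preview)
Your proposal is correct and follows essentially the same approach as the paper: apply Theorem~\ref{th: singular locus for F_s without Pi(m-1), Pi(m)} to $\bfs F_s^{(\bfs d)}$ (using that $({\sf A}_2)$ for $\bfs G_s$ is precisely $({\sf A}_1)$ for $\bfs G_s^{(\bfs d)}$), then observe that any point of $\Sigma^\infty$ lies in the projectivization of the rank-deficiency cone $A$, whence $\dim\Sigma^\infty\le\dim A-1\le m-s-k-1$. The paper's proof is somewhat terser at the infinity step---it simply asserts that any $(0{:}x_1{:}\cdots{:}x_m)\in\Sigma^\infty$ satisfies $\mathrm{rank}\,J(\bfs F_s^{(\bfs d)})(x_1,\ldots,x_m)<s$ (which follows already from the inclusion $\mathrm{pcl}(V)\subset V(\bfs F_s^h)$ together with $\dim\mathrm{pcl}(V)=m-s$), whereas you go further and establish $\mathrm{pcl}(V)=V(\bfs F_s^h)$ first; this extra step is valid but not strictly needed here, and the paper in fact postpones it to Theorem~\ref{th: pcl(V) is normal abs irred}.
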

\begin{proof}
Recall that each $F_i$ is expressed in the form
$F_i=F_i^{(d_i)}+F_i^*$, where $F_i^{(d_i)}$ denotes the homogeneous
component of $F_i$ of degree $d_i$ and $F_i^*$ is the sum of the
homogeneous components of $F_i$ of degree less than $d_i$. Then the
homogenization $F_i^h$ of $F_i$ can be expressed in the following
way:
$$F_i^h=F_i^{(d_i)}+X_0^{e_i}(F_i^*)^h,\textrm{ with }e_i\ge 1.$$
As a consequence, any point ${\bfs
x}:=(0:x_1:\cdots:x_{m})\in\Sigma^\infty$ satisfies the conditions
\begin{equation}\label{eq: singular locus at infinity}
\bfs F_s^h({\bfs x})=\bfs F_s^{(\bfs d)}(x_1,\ldots,x_m)=0,\quad
\textrm{rank }J(\bfs F_s^{(\bfs d)})(x_1,\ldots,x_m)<s,
\end{equation}
where $\bfs F_s^{(\bfs d)}:=(F_1^{(d_1)},\ldots,F_s^{(d_s)})$.
Assumption $({\sf A_2})$ asserts that $\bfs F_s^{(\bfs d)}$
satisfies assumption $({\sf A_1})$. Therefore, Theorem \ref{th:
singular locus for F_s without Pi(m-1), Pi(m)} proves that the
affine cone of $\A^m$ defined by \eqref{eq: singular locus at
infinity} has dimension at most $m-s-k$. It follows that the
projective variety $\Sigma^\infty$ has dimension at most $m-s-k-1$.
\end{proof}

Now we show the main result of this section.
\begin{theorem}\label{th: pcl(V) is normal abs irred}
Suppose that $\mathrm{char}(\fq)$ does not divide
$m(m-1)\cdots(m-k+1)$, and let $\bfs F_s$ be as in \eqref{def: f}
satisfying assumptions $({\sf A_1})$ and $({\sf A_2})$, not
depending on $\Pi_{m-k+1},\ldots,$ $\Pi_m$. Then $\mathrm{pcl}(V)$
is a complete intersection regular in codimension $k-1$ of degree
$d_1\cdots d_s$.
\end{theorem}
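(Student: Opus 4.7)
The plan is to verify, in order, that $\mathrm{pcl}(V)=V(F_1^h,\ldots,F_s^h)$ is a set-theoretic complete intersection of pure dimension $m-s$, that its singular locus has codimension at least $k$ in it, and finally that the ideal $(F_1^h,\ldots,F_s^h)$ is in fact radical, so that B\'ezout's theorem yields the degree.

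First I would locate the variety at infinity. Writing $F_i^h=F_i^{(d_i)}+X_0^{e_i}(F_i^*)^h$ with $e_i\ge 1$ as in the proof of Lemma \ref{lemma: singular locus at infinity without Pi_(m-1),Pi_m}, the projective variety $V^\star:=V(F_1^h,\ldots,F_s^h)\subset\Pp^m$ meets $\A^m$ exactly in $V$ and meets $\{X_0=0\}\cong\Pp^{m-1}$ in the projectivization of $V^{(\bfs d)}=V(\bfs F_s^{(\bfs d)})$. Lemma \ref{lemma: consequences hypothesis A_1}, applied to $\bfs G_s$ and to $\bfs G_s^{(\bfs d)}$ via assumptions $({\sf A}_1)$ and $({\sf A}_2)$, gives that $V$ and $V^{(\bfs d)}$ are each of pure dimension $m-s$, so $V^\star\cap\{X_0=0\}$ has pure dimension $m-s-1$. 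Since $V^\star$ is defined in $\Pp^m$ by $s$ equations, every one of its irreducible components has dimension at least $m-s$; combined with the strict dimension drop at infinity, every component must intersect $\A^m$ in a Zariski-dense set of dimension $m-s$. Hence $V^\star=\mathrm{pcl}(V)$ is of pure dimension $m-s$, a set-theoretic complete intersection, and by Lemma \ref{lemma: reg seq and complete int} the polynomials $F_1^h,\ldots,F_s^h$ form a regular sequence in $\cfq[X_0,\ldots,X_m]$.

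Next I would bound the singular locus. It decomposes as $\mathrm{Sing}(\mathrm{pcl}(V))=\Sigma\cup\Sigma^{\infty}$, where $\Sigma$ is the affine singular locus and $\Sigma^{\infty}$ the part at infinity. Theorem \ref{th: singular locus for F_s without Pi(m-1), Pi(m)} yields $\dim\Sigma\le m-s-k$, while Lemma \ref{lemma: singular locus at infinity without Pi_(m-1),Pi_m} gives $\dim\Sigma^{\infty}\le m-s-k-1$. Therefore $\dim\mathrm{Sing}(\mathrm{pcl}(V))\le m-s-k$, which is precisely the statement that $\mathrm{pcl}(V)$ is regular in codimension $k-1$.

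The delicate step, and the main obstacle, is upgrading the set-theoretic complete intersection to an ideal-theoretic one so that B\'ezout's formula \eqref{eq: Bezout theorem} can be invoked. The regular sequence from the first paragraph makes $\cfq[X_0,\ldots,X_m]/(F_1^h,\ldots,F_s^h)$ a Cohen--Macaulay ring, so it satisfies Serre's condition $(S_1)$ and has no embedded associated primes. Since $k\ge 2$, the singular locus has codimension at least $2$ in $\mathrm{pcl}(V)$, hence contains no irreducible component; consequently the Jacobian of $F_1^h,\ldots,F_s^h$ has full rank $s$ at the generic point of every component, which is Serre's condition $(R_0)$. Serre's criterion $(R_0)+(S_1)$ then implies that the quotient ring is reduced, so $(F_1^h,\ldots,F_s^h)=I(\mathrm{pcl}(V))$. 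B\'ezout's theorem \eqref{eq: Bezout theorem} finally yields $\deg\mathrm{pcl}(V)=d_1\cdots d_s$.
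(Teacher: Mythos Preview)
Your argument is essentially the paper's, with one genuine difference worth noting and one small slip.

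The difference: you identify $V^\star=V(F_1^h,\ldots,F_s^h)$ with $\mathrm{pcl}(V)$ at the outset, by observing that $V^\star\cap\{X_0=0\}$ has dimension only $m-s-1$, so no irreducible component of $V^\star$ can lie at infinity and hence $V^\star$ is the closure of its affine part $V$. The paper instead first shows $V^\star$ is a radical complete intersection, then normal, hence absolutely irreducible via Theorem~\ref{th: normal complete int implies irred}, and only then concludes $\mathrm{pcl}(V)=V^\star$ by comparing dimensions inside an irreducible variety. Your route is more direct and avoids invoking Hartshorne connectedness for this identification.

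The slip: in your last paragraph you write ``the singular locus has codimension at least $2$\ldots\ consequently the Jacobian of $F_1^h,\ldots,F_s^h$ has full rank $s$ at the generic point of every component.'' This inference runs the wrong way: smoothness of a variety at a point only guarantees full rank for generators of the \emph{radical} ideal, not for an arbitrary set-theoretic presentation (consider $V(x^2)\subset\A^1$, smooth at the origin while $\partial(x^2)/\partial x$ vanishes there). The repair is immediate and already contained in what you cite: Theorem~\ref{th: singular locus for F_s without Pi(m-1), Pi(m)} and Lemma~\ref{lemma: singular locus at infinity without Pi_(m-1),Pi_m} actually bound the locus where $J\bfs F_s$ (respectively $J\bfs F_s^{(\bfs d)}$) drops rank, not merely the singular locus. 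Quote those stronger statements directly and $(R_0)$ follows without circularity; this is exactly the phrasing the paper uses before appealing to \cite[Theorem 18.15]{Eisenbud95}.
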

\begin{proof}
Denote $V^*:=V(\bfs F_s^h)\subset\Pp^m$. According to Lemma
\ref{lemma: consequences hypothesis A_1}, the affine varieties
$V(\bfs F_s)$ and $V(\bfs F_s^{(\bfs d)})$ are of (pure) dimension
$m-s$, and therefore $V^*$ has dimension $m-s$. As it is defined by
$s$ homogeneous polynomial, by Lemma \ref{lemma: reg seq and
complete int} we conclude that $\bfs F_s^h$ form a regular sequence.
Further, Theorem \ref{th: singular locus for F_s without Pi(m-1),
Pi(m)} and Lemma \ref{lemma: singular locus at infinity without
Pi_(m-1),Pi_m} show that the set of points of $V^*$ for which
$J(\bfs F_s^h)$ is not of maximal rank, has codimension at least 2
in $V^*$. By, e.g., \cite[Theorem 18.15]{Eisenbud95}, we deduce that
$\bfs F_s^h$ generate a radical ideal. It follows that $V^*$ is an
ideal-theoretic complete intersection.

From Theorem \ref{th: singular locus for F_s without Pi(m-1), Pi(m)}
it follows that the singular locus of $V^*\cap\{X_0\not=0\}$ has
dimension at most $m-s-k$. Further, Lemma \ref{lemma: singular locus
at infinity without Pi_(m-1),Pi_m} shows that the singular locus of
$V^*$ at infinity has dimension at most $m-s-k-1$. We conclude that
the singular locus of $V^*$ has dimension at most $m-s-k$, and thus
it is regular in codimension $k-1$. In particular, $V^*$ is a normal
variety, and Theorem \ref{th: normal complete int implies irred}
implies that it is absolutely irreducible.

Now, since $\mathrm{pcl}(V)$ is of pure dimension $m-s$ and it is
contained in the absolutely irreducible variety $V^*$, of dimension
$m-s$, we deduce that $\mathrm{pcl}(V)=V^*$. Finally, the B\'ezout
Theorem \eqref{eq: Bezout theorem} proves the assertion on the
degree.
\end{proof}

We conclude with the following result, concerning the behavior of
$\mathrm{pcl}(V)$ at infinity.
\begin{corollary}\label{coro: locus F_s at infinity without Pi_(m-1),Pi_m}
With hypotheses as in Theorem \ref{th: pcl(V) is normal abs irred},
the locus of $\mathrm{pcl}(V)$ at infinity is a complete
intersection regular in codimension $k-1$ of degree $d_1\cdots d_s$.
\end{corollary}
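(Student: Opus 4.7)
The plan is to reduce the statement to a second application of Theorem \ref{th: singular locus for F_s without Pi(m-1), Pi(m)}, this time with $\bfs F_s$ replaced by the top-degree tuple $\bfs F_s^{(\bfs d)}$, and then to re-run the complete intersection argument of Theorem \ref{th: pcl(V) is normal abs irred} inside the hyperplane at infinity, one dimension lower.

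First I would identify the locus at infinity explicitly. Theorem \ref{th: pcl(V) is normal abs irred} gives $\mathrm{pcl}(V)=V(\bfs F_s^h)\subset\Pp^m$, and setting $X_0=0$ in each homogenization $F_i^h$ isolates the top-degree component $F_i^{(d_i)}$. Hence the locus of $\mathrm{pcl}(V)$ at infinity, regarded as a projective subvariety of $\{X_0=0\}\cong\Pp^{m-1}$, is exactly $V(\bfs F_s^{(\bfs d)})$. Next I would check that $\bfs F_s^{(\bfs d)}$ fits into the framework of Theorem \ref{th: singular locus for F_s without Pi(m-1), Pi(m)}: by \eqref{def: f_i^(d_i)} one has $F_i^{(d_i)}=G_i^{(d_i)}(\Pi_1,\ldots,\Pi_{m-k})$, since $G_i$ (and therefore its top weighted component $G_i^{(d_i)}=G_i^{\wt}$) does not depend on $Y_{m-k+1},\ldots,Y_m$; moreover assumption $({\sf A}_2)$ is precisely the statement that $\bfs G_s^{(\bfs d)}$ satisfies condition $({\sf A}_1)$. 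Under the same hypothesis on $\mathrm{char}(\fq)$, Theorem \ref{th: singular locus for F_s without Pi(m-1), Pi(m)} applied to $\bfs F_s^{(\bfs d)}$ therefore yields that the singular locus of the affine cone $V(\bfs F_s^{(\bfs d)})\subset\A^m$ has dimension at most $m-s-k$.

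The final step mirrors the proof of Theorem \ref{th: pcl(V) is normal abs irred} inside $\Pp^{m-1}$. By Lemma \ref{lemma: consequences hypothesis A_1}, $V(\bfs F_s^{(\bfs d)})\subset\A^m$ is a cone of pure dimension $m-s$, so its projectivization in $\Pp^{m-1}$ has pure dimension $m-s-1$ and singular locus of dimension at most $m-s-k-1$. Being cut out in $\Pp^{m-1}$ by $s=(m-1)-(m-s-1)$ homogeneous polynomials of correct pure dimension, Lemma \ref{lemma: reg seq and complete int} yields that $\bfs F_s^{(\bfs d)}$ form a regular sequence; together with the codimension $\ge k\ge 2$ bound on the singular locus, \cite[Proposition 18.13 and Theorem 18.15]{Eisenbud95} imply that the ideal $(\bfs F_s^{(\bfs d)})$ is radical, so the locus at infinity is an ideal-theoretic complete intersection, regular in codimension $k-1$. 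The B\'ezout Theorem \eqref{eq: Bezout theorem} then gives the degree $d_1\cdots d_s$. I do not foresee a genuine obstacle here since all of the dimension-theoretic work has already been done; the only point I would write out carefully is the identification of the weighted top component $G_i^{\wt}$ with $G_i^{(d_i)}$ and the fact that $({\sf A}_2)$ is exactly $({\sf A}_1)$ for $\bfs G_s^{(\bfs d)}$, which is what permits recycling the earlier theorem verbatim.
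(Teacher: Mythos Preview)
Your proposal is correct and follows essentially the same idea as the paper's own proof, only spelled out in much more detail. The paper's argument is just two sentences: observe that $\bfs F_s^{(\bfs d)}$ itself satisfies both assumptions $({\sf A}_1)$ and $({\sf A}_2)$ (since its top-degree tuple is again $\bfs F_s^{(\bfs d)}$), and then invoke Theorem~\ref{th: pcl(V) is normal abs irred} verbatim for this new system. Your version unpacks what that invocation actually entails inside $\{X_0=0\}\cong\Pp^{m-1}$, re-running Theorem~\ref{th: singular locus for F_s without Pi(m-1), Pi(m)} and the Cohen--Macaulay/radicality argument one dimension lower. The identification $G_i^{(d_i)}=G_i^{\wt}$ that you single out is indeed the only nontrivial bookkeeping step, and your explanation of it is correct.
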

\begin{proof}
The fact that $\bfs F_s$ satisfies assumptions $({\sf A_1})$ and
$({\sf A_2})$ implies that $\bfs F_s^{(\bfs d)}$ also satisfies both
assumptions. Therefore, the assertion of the corollary follows
immediately from Theorem \ref{th: pcl(V) is normal abs irred}.
\end{proof}
%
%------------------------------------------------------------------
%------------------------------------------------------------------
%------------------------------------------------------------------
%------------------------------------------------------------------
%
\subsection{Estimates for $|V(\fq)|$ and consequences}
In what follows, we shall use estimates on the number of
$\fq$--rational points of singular projective complete intersections
of \cite{CaMaPr15} and \cite{GhLa02a} (see also \cite{CaMa07} or
\cite{MaPePr16} for other estimates).

More precisely, let $V\subset \Pp^n$ be a complete intersection
defined over $\fq$ of dimension $l\geq 2$, which is regular in
codimension $k-1$ for certain $2\le k\le l$. Let $\bfs
d:=(d_1,\ldots,d_{n-l})$ be its multidegree, and let
\begin{align*}
p_l&:=q^l+q^{l-1}+\cdots+q+1=|\Pp^l(\fq)|,\\
\delta&:=\prod_{i=1}^{n-l}d_i,\quad D:=\sum_{i=1}^{n-l}(d_i-1),
\quad d:=\max\{d_1,\ldots,d_{n-l}\}.
\end{align*}
Then the following estimates hold (see \cite[Theorem 1.3]{CaMaPr15}
for the first two estimates and \cite[Theorem 6.1 and Proposition
4.2]{GhLa02a} for the last one):
\begin{align}\label{eq: estimate normal var CaMaPr}
\big||V(\fq)|-p_l\big| &\leq
(\delta(D-2)+2)q^{l-\frac{1}{2}}+14D^2\delta^2 q^{l-1}\textrm{ for
}k=2,\\[1ex]
\label{eq: estimate var regular codim 2 CaMaPr}
\big||V(\fq)|-p_l\big| &\leq 14D^3\delta^2 q^{l-1}\textrm{ for
}k=3,\\
\label{eq: estimate var regular codim k-1 GhLa}
\big||V(\fq)|-p_l\big| &\leq {n+1\choose
l}(d+1)^nq^{\frac{2l-k+1}{2}}+9\cdot 2^{n-l}((n-l)d+3)^{n+1}
q^{\frac{2l-k}{2}}\\\notag&\qquad\qquad\qquad\qquad\qquad
\qquad\qquad\qquad\qquad\qquad\textrm{ for general }k.
\end{align}

Now we are able to estimate the number of points of complete
intersections defined by smooth symmetric systems.
\begin{theorem}\label{th: estimate |V| without Pi_(m-1),Pi_m}
Suppose that $\mathrm{char}(\fq)$ does not divide
$m(m-1)\cdots(m-k+1)$. Let $\bfs F_s$ be as in \eqref{def: f}
satisfying assumptions $({\sf A_1})$ and $({\sf A_2})$, not
depending on $\Pi_{m-k+1},\ldots,\Pi_m$, and let $V:=V(\bfs
F_s)\subset\A^m$. Let $d_i:=\deg F_i$ for $1\le i\le s$,
$\delta:=\prod_{i=1}^sd_i$, $D:=\sum_{i=1}^s(d_i-1)$ and
$d:=\max\{d_1,\ldots,d_s\}$. We have
\begin{align*}
\big||V(\fq)|-q^{m-s}\big|\le
\left\{\begin{array}{l}q^{m-s-\frac{1}{2}}(1+q^{-1})\big((\delta(D-2)+2)+14D^2\delta^2q^{-\frac{1}{2}}\big)\
\textrm{
for }k=2,\\[1ex]
q^{m-s-1}(1+q^{-1})14D^3\delta^2\ \textrm{ for
}k=3.\end{array}\right.
\end{align*}
Further, for $2\le k< m-s$ we have
\begin{align*}
\big||V(\fq)|-q^{m-s}\big|&\le
q^{m-s-\frac{k-1}{2}}(1+q^{-1})\bigg({m+1\choose s+1}(d+1)^m+9\cdot
2^s(sd+3)^{m+1} q^{-\frac{1}{2}}\bigg).
\end{align*}
\end{theorem}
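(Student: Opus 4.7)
The plan is to pass from the affine variety $V$ to its projective closure $\mathrm{pcl}(V)\subset\Pp^m$, use the rational point estimates already available for complete intersections that are regular in small codimension, and subtract the contribution of the hyperplane at infinity. All the geometric input we need has already been assembled: Theorem \ref{th: pcl(V) is normal abs irred} tells us that $\mathrm{pcl}(V)\subset\Pp^m$ is an absolutely irreducible complete intersection of dimension $m-s$ and multidegree $(d_1,\ldots,d_s)$, regular in codimension $k-1$; Corollary \ref{coro: locus F_s at infinity without Pi_(m-1),Pi_m} tells us that $V_\infty:=\mathrm{pcl}(V)\cap\{X_0=0\}\subset\Pp^{m-1}$ is a complete intersection of dimension $m-s-1$, also regular in codimension $k-1$, with the same multidegree.

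The first step is the identity
\begin{equation*}
|V(\fq)|=|\mathrm{pcl}(V)(\fq)|-|V_\infty(\fq)|,\qquad p_{m-s}-p_{m-s-1}=q^{m-s},
\end{equation*}
from which the triangle inequality yields
\begin{equation*}
\big||V(\fq)|-q^{m-s}\big|\le \big||\mathrm{pcl}(V)(\fq)|-p_{m-s}\big|+\big||V_\infty(\fq)|-p_{m-s-1}\big|.
\end{equation*}
Thus I only need to bound each of the two terms by the appropriate general estimate and add.

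For the case $k=2$ I would apply \eqref{eq: estimate normal var CaMaPr} to $\mathrm{pcl}(V)$ (with $n=m$, $l=m-s$, multidegree $(d_1,\ldots,d_s)$) and separately to $V_\infty$ (with $n=m-1$, $l=m-s-1$, same multidegree). The ambient dimension shift only changes $q^{l-1/2}$ and $q^{l-1}$ by a factor of $q^{-1}$, so each term gets multiplied by $(1+q^{-1})$ after adding, giving exactly
\begin{equation*}
q^{m-s-\frac{1}{2}}(1+q^{-1})\bigl((\delta(D-2)+2)+14D^2\delta^2q^{-1/2}\bigr).
\end{equation*}
The case $k=3$ is handled identically by invoking \eqref{eq: estimate var regular codim 2 CaMaPr} in place of \eqref{eq: estimate normal var CaMaPr}.

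For general $k$ with $2\le k<m-s$, I would invoke \eqref{eq: estimate var regular codim k-1 GhLa}. For $\mathrm{pcl}(V)$ this yields a bound of the form $\binom{m+1}{s+1}(d+1)^m q^{m-s-(k-1)/2}+9\cdot 2^s(sd+3)^{m+1}q^{m-s-k/2}$. For $V_\infty$ the corresponding bound has the same shape but with $n=m-1$, $l=m-s-1$: the combinatorial factors $\binom{m}{s+1}(d+1)^{m-1}$ and $9\cdot 2^s(sd+3)^m$ are smaller, and the $q$-exponents are each lowered by $1$. Consequently each term from the $V_\infty$ contribution is dominated by $q^{-1}$ times the matching term for $\mathrm{pcl}(V)$, producing the $(1+q^{-1})$ factor after summation and yielding the displayed bound. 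The main obstacle, such as it is, is only bookkeeping: making sure the combinatorial coefficients and the powers of $q$ from the two applications (ambient space $\Pp^m$ versus $\Pp^{m-1}$) line up so that everything can be cleanly packaged as $q^{m-s-(k-1)/2}(1+q^{-1})$ times the stated bracket.
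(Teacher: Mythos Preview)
Your proposal is correct and follows essentially the same argument as the paper's proof: both pass to the projective closure and subtract the contribution at infinity via the identity $|V(\fq)|-q^{m-s}=\big(|\mathrm{pcl}(V)(\fq)|-p_{m-s}\big)-\big(|V_\infty(\fq)|-p_{m-s-1}\big)$, invoke Theorem~\ref{th: pcl(V) is normal abs irred} and Corollary~\ref{coro: locus F_s at infinity without Pi_(m-1),Pi_m} for the geometric regularity, and then apply the estimates \eqref{eq: estimate normal var CaMaPr}, \eqref{eq: estimate var regular codim 2 CaMaPr}, \eqref{eq: estimate var regular codim k-1 GhLa} to each piece, absorbing the (smaller) infinity contribution into the $(1+q^{-1})$ factor. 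The bookkeeping you describe is exactly what the paper carries out.
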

\begin{proof}
Observe that
\begin{align*}
\big||V(\fq)|-q^{m-s}\big|&=
\big||\mathrm{pcl}(V)(\fq)|-|\mathrm{pcl}(V(\fq))^{\infty}|-p_{m-s}+p_{m-s-1}\big|\\
&\le
\big||\mathrm{pcl}(V)(\fq)|-p_{m-s}\big|+\big||\mathrm{pcl}(V(\fq))^{\infty}|-p_{m-s-1}\big|.\end{align*}
Theorem \ref{th: pcl(V) is normal abs irred} asserts that
$\mathrm{pcl}(V)\subset\Pp^{m}$ is a complete intersection defined
over $\fq$ of multidegree $\bfs d:=(d_1,\ldots,d_s)$ which is
regular in codimension $k-1$. On the other hand, by Corollary
\ref{coro: locus F_s at infinity without Pi_(m-1),Pi_m} the locus
$\mathrm{pcl}(V)^{\infty}\subset\Pp^{m-1}$ of $\mathrm{pcl}(V)$ at
infinity is also a complete intersection of multidegree $\bfs
d:=(d_1,\ldots,d_s)$ which is regular in codimension $k-1$.
Therefore, applying \eqref{eq: estimate normal var CaMaPr} and
\eqref{eq: estimate var regular codim 2 CaMaPr} we obtain
\begin{align*}
\big||V(\fq)|-q^{m-s}\big|\le
\left\{\begin{array}{l}q^{m-s-\frac{1}{2}}(1+q^{-1})\big((\delta(D-2)+2)+14D^2\delta^2q^{-\frac{1}{2}}\big)\
\textrm{
for }k=2,\\[1ex]
q^{m-s-1}(1+q^{-1})14D^3\delta^2\ \textrm{ for
}k=3.\end{array}\right.
\end{align*}
On the other hand, for general $k$ we apply \eqref{eq: estimate var
regular codim k-1 GhLa} to obtain
\begin{align*}
\big||\mathrm{pcl}(V)(\fq)|-p_{m-s}\big|&\le
q^{m-s-\frac{k-1}{2}}\bigg({m+1\choose s+1}(d+1)^m+9\cdot
2^{s}(sd+3)^{m+1}
q^{-\frac{1}{2}}\bigg),\\
%&\le q^{m-s-\frac{k-1}{2}}\bigg({m+1\choose s+1}(d+1)^{m+1}+9\cdot
%2^s((s+1)d+3)^{m+1}
%q^{-\frac{1}{2}}\bigg),\\
\big||\mathrm{pcl}(V)^{\infty}(\fq)|-p_{m-s-1}\big|&\le
q^{m-s-\frac{k+1}{2}}\bigg({m\choose s+1}(d+1)^{m-1}+9\cdot
2^s(sd+3)^{m} q^{-\frac{1}{2}}\bigg)\\
&\le q^{m-s-\frac{k+1}{2}}\bigg({m+1\choose s+1}(d+1)^m+9\cdot
2^s(sd+3)^{m+1} q^{-\frac{1}{2}}\bigg).\end{align*}
We readily deduce the theorem.
\end{proof}

If $q\ge 36D^2\delta^2$, then we can slightly simplify the estimate
for $k=2$ to
$$\big||V(\fq)|-q^{m-s}\big|\leq
q^{m-s-\frac{1}{2}}\mbox{$\frac{3}{2}$}\big(D\delta+14D^2\delta^2q^{-\frac{1}{2}}\big)\leq
q^{m-s-\frac{1}{2}}\mbox{$\frac{3}{2}$}\big(1+\mbox{$\frac{14}{6}$}\big)D\delta.$$
Using this estimate we are able to establish a condition which
implies that $V(\fq)$ is nonempty. For $q\ge 36D^2\delta^2$, we have
that $|V(\fq)|>0$ if
$$q^{m-s}-\mbox{$\frac{3}{2}$}\big(1+\mbox{$\frac{14}{6}$}\big)D\delta\, q^{m-s-\frac{1}{2}}> 0,$$
which actually holds. In other words, we have the following result.
\begin{corollary}
Suppose that $\mathrm{char}(\fq)$ does not divide $m(m-1)$. Let
$\bfs F_s$ be as in \eqref{def: f} satisfying assumptions $({\sf
A_1})$ and $({\sf A_2})$, not depending on $\Pi_{m-1},\Pi_m$. If
$q\ge 36D^2\delta^2$, then $|V(\fq)|\ge 1$.
\end{corollary}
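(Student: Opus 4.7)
The plan is to derive the nonvanishing of $V(\fq)$ directly from the estimate provided by Theorem \ref{th: estimate |V| without Pi_(m-1),Pi_m} in the case $k=2$. Since $\bfs F_s$ does not depend on $\Pi_{m-1}$ and $\Pi_m$, satisfies $({\sf A_1})$ and $({\sf A_2})$, and $\mathrm{char}(\fq)$ does not divide $m(m-1)$, that theorem applies with $k=2$ and gives
$$\big||V(\fq)|-q^{m-s}\big|\le q^{m-s-\frac{1}{2}}(1+q^{-1})\bigl((\delta(D-2)+2)+14D^2\delta^2 q^{-\frac{1}{2}}\bigr).$$

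Next I would simplify the right-hand side using the hypothesis $q\ge 36D^2\delta^2$. This inequality gives $q^{-1/2}\le 1/(6D\delta)$, and hence $14D^2\delta^2q^{-1/2}\le \frac{14}{6}D\delta$. Combining this with the trivial bound $\delta(D-2)+2\le D\delta$ (valid since $\delta\ge 1$) and $(1+q^{-1})\le 3/2$ (for $q\ge 2$), as sketched in the paragraph preceding the corollary, collapses the estimate to
$$\big||V(\fq)|-q^{m-s}\big|\le \tfrac{3}{2}\bigl(1+\tfrac{14}{6}\bigr)D\delta\, q^{m-s-\frac{1}{2}}=5D\delta\, q^{m-s-\frac{1}{2}}.$$

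From this I would conclude
$$|V(\fq)|\ge q^{m-s}-5D\delta\, q^{m-s-\frac{1}{2}}=q^{m-s-\frac{1}{2}}\bigl(q^{\frac{1}{2}}-5D\delta\bigr),$$
which is strictly positive whenever $q>25D^2\delta^2$, a condition implied by $q\ge 36D^2\delta^2$. Hence $|V(\fq)|\ge 1$, as desired.

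There is no real obstacle: the content of the corollary is packaged inside Theorem \ref{th: estimate |V| without Pi_(m-1),Pi_m}, and the remaining argument is just the bookkeeping to verify that the threshold $36D^2\delta^2$ is large enough to absorb all the numerical constants appearing in the error term and to beat the main term $q^{m-s}$. The only mild subtlety is choosing a clean upper bound for the bracketed quantity; the factorization through the intermediate constant $5=\frac{3}{2}(1+\frac{14}{6})$ makes the comparison $q^{1/2}>5D\delta$ transparent.
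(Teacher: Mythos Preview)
Your proof is correct and follows essentially the same approach as the paper: both apply Theorem \ref{th: estimate |V| without Pi_(m-1),Pi_m} with $k=2$, simplify the error term via $\delta(D-2)+2\le D\delta$, $(1+q^{-1})\le 3/2$, and $14D^2\delta^2 q^{-1/2}\le \tfrac{14}{6}D\delta$, and then observe that $q\ge 36D^2\delta^2$ forces the resulting bound $5D\delta\,q^{m-s-1/2}$ below $q^{m-s}$.
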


In the applications of the next sections not only estimates on the
number of $\fq$--rational points of a complete intersection as above
are required, but also on the number of $\fq$--rational points with
certain pairwise--distinct coordinates, which is the subject of the
next results.
\begin{proposition}
\label{theorem: nb point V_r distinct coordinates} With notations
and assumptions as in Theorem \ref{th: estimate |V| without
Pi_(m-1),Pi_m}, for any $i$ and $j$ with $1\le i<j\le m$ we have
that $V\cap\{X_i=X_j\}$ is of pure dimension $m-s-1$. In particular,
$$|V(\fq)\cap\{X_i=X_j\}|\le \delta\, q^{m-s-1}.$$
\end{proposition}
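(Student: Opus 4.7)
The plan is to leverage the absolute irreducibility of $V$ established in Theorem \ref{th: pcl(V) is normal abs irred}, combine it with the symmetry of $\bfs F_s$ to rule out containment of $V$ in the hyperplane $\{X_i=X_j\}$, and then conclude via the principal ideal theorem. First I would observe that Theorem \ref{th: pcl(V) is normal abs irred} shows $\mathrm{pcl}(V)\subset\Pp^m$ is an absolutely irreducible complete intersection of dimension $m-s$ and degree $\delta=d_1\cdots d_s$; since by Corollary \ref{coro: locus F_s at infinity without Pi_(m-1),Pi_m} the locus at infinity has strictly smaller dimension $m-s-1$, the affine part $V$ is a nonempty Zariski-dense open subset of $\mathrm{pcl}(V)$ and is therefore absolutely irreducible of dimension $m-s$ and degree $\delta$ as well.

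Next I would show that $V\not\subset\{X_i=X_j\}$. The key observation is that $V$ is invariant under the natural action of the symmetric group $S_m$ on $X_1,\ldots,X_m$, because the generators $F_1,\ldots,F_s$ are symmetric. If $V$ were contained in $\{X_i=X_j\}$, applying all coordinate permutations would yield $V\subset\{X_{i'}=X_{j'}\}$ for every pair $i'\ne j'$, hence $V$ would be contained in the diagonal $\{X_1=\cdots=X_m\}$, which is one-dimensional. However, the hypotheses of Theorem \ref{th: estimate |V| without Pi_(m-1),Pi_m} require $2\le k\le m-s$, so $\dim V=m-s\ge 2$, a contradiction.

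Since $V$ is irreducible of dimension $m-s$ and is not contained in the hyperplane $\{X_i=X_j\}$, Krull's principal ideal theorem forces $V\cap\{X_i=X_j\}$ to be of pure dimension $m-s-1$. The B\'ezout inequality \eqref{eq: Bezout} then gives $\deg(V\cap\{X_i=X_j\})\le\deg V\cdot 1=\delta$, and the affine bound \eqref{eq: upper bound -- affine gral} yields $|V(\fq)\cap\{X_i=X_j\}|\le\delta\,q^{m-s-1}$. No step presents a serious obstacle; the only borderline case worth double-checking is $k=2$, $m-s=2$, where $V$ is a surface and the diagonal is a line, so the containment argument still produces a contradiction.
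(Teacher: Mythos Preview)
Your argument is sound and takes a genuinely different route from the paper. Both proofs begin by noting that $V$ is absolutely irreducible (Theorem~\ref{th: pcl(V) is normal abs irred} plus Theorem~\ref{th: normal complete int implies irred}), but the paper then sets irreducibility aside and instead identifies $\{X_{m-1}=X_m\}\cap V$ with the preimage of $\{\bfs G_s=\bfs 0\}\subset\A^{m-1}$ under the finite morphism $\bfs\Pi_{m-1,\mathcal{I}}$ of Proposition~\ref{prop: Phi_i is a finite morphism} (for the partition $\mathcal{I}$ with $I_l=\{l\}$ for $l\le m-2$ and $I_{m-1}=\{m-1,m\}$); pure dimension $m-s-1$ then follows because finite morphisms preserve pure dimension of preimages and $\{\bfs G_s=\bfs 0\}\subset\A^{m-1}$ has pure dimension $m-s-1$ by $({\sf A}_1)$. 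Your symmetry argument for $V\not\subset\{X_i=X_j\}$ is a pleasant shortcut that sidesteps the finite-morphism machinery; the paper's route has the advantage of reusing Proposition~\ref{prop: Phi_i is a finite morphism} and of delivering nonemptiness for free.

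On that last point there is a small gap in your write-up: Krull's principal ideal theorem guarantees that every component of $V\cap\{X_i=X_j\}$ has codimension one in $V$, but in the affine setting it does not guarantee the intersection is nonempty (think of $\{xy=1\}\cap\{x=0\}$ in $\A^2$). This is easily repaired by working projectively: $\mathrm{pcl}(V)\cap\{X_i=X_j\}\subset\Pp^m$ is nonempty of pure dimension $m-s-1$, and it cannot lie entirely in $\mathrm{pcl}(V)^\infty$, since the latter is absolutely irreducible of dimension $m-s-1$ (Corollary~\ref{coro: locus F_s at infinity without Pi_(m-1),Pi_m} and Theorem~\ref{th: normal complete int implies irred}) and, by your own symmetry argument applied to $\bfs F_s^{(\bfs d)}$, is not contained in $\{X_i=X_j\}$. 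For the point-count inequality alone this is moot, as the bound holds trivially when the set is empty.
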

\begin{proof}
Theorem \ref{th: pcl(V) is normal abs irred} shows that
$\mathrm{pcl}(V)$ is a complete intersection which is regular in
codimension 2. Therefore, by Theorem \ref{th: normal complete int
implies irred} we conclude that it is absolutely irreducible. This
implies that $V$ is also absolutely irreducible.

Without loss of generality we may assume that $i=m-1$ and $j=m$. Let
$\mathcal{I}:=\{I_1,\ldots,I_{m-1}\}$ be the partition of
$\{1,\ldots,m\}$ such that $I_k:=\{k\}$ for $1\le k\le m-2$ and
$I_{m-1}:=\{m-1,m\}$. As $G_1,\ldots,G_s$ do not depend on
$Y_{m-k+1},\ldots,Y_m$, we may consider $G_1,\ldots,G_s$ as elements
of $\cfq[Y_1,\ldots,Y_{m-1}]$. In this sense, the set
$\{X_{m-1}=X_m\}\cap V$ is isomorphic to the set
$$\{\bfs x\in\A^{m-1}: \bfs G_s(\bfs\Pi_{m-1,\mathcal{I}}(\bfs x))=\bfs 0\},$$
where
$\Pi_{m-1,\mathcal{I}}:=\Pi_{m-1}(X_1,\ldots,X_{m-1},X_{m-1})$.
Assumption $({\sf A}_1)$ implies that $\{\bfs
G_s=0\}\subset\A^{m-1}$ is of pure dimension $m-1-s$. Proposition
\ref{prop: Phi_i is a finite morphism} asserts that
$\bfs\Pi_{m-1,\mathcal{I}}$ is a finite morphism. It follows that
$$\bfs\Pi_{m-1,\mathcal{I}}^{-1}(\{\bfs G_s=0\})
=\{\bfs x\in\A^{m-1}:G(\bfs\Pi_{m-1,\mathcal{I}}(\bfs x))=\bfs 0\}$$
is of pure dimension $m-s-1$.

Finally, by the B\'ezout inequality \eqref{eq: Bezout} it follows
that $ \deg V\cap\{X_i=X_j\} \le\deg V$. Then the second assertion
readily follows from \eqref{eq: upper bound -- affine gral}.
\end{proof}
Let $\mathcal{S}$ be a subset of the set $\{(i,j):1\le i<j\le m\}$
and $V^{=}\subset\A^m$ the variety
$$V^{=}:=\bigcup_{(i,j)\in\mathcal{S}}V\cap\{X_i=X_j\}.$$
%
%Observe that $V_{r}^==V_{r}\cap \mathcal{H}_r$, where
%$\mathcal{H}_r\subset \A^r$ is the hypersurface defined by the
%polynomial $F_r:=\prod_{1 \leq i <j\leq r} (X_i-X_j)$.
Finally, denote $V^{\not=}:=V\setminus V^=$. We have the following
result.
\begin{corollary}
\label{coro: nb point V_r distinct coordinates} With notations and
assumptions as in Theorem \ref{th: estimate |V| without
Pi_(m-1),Pi_m}, we have
\begin{align*}
\big||V^{\not=}(\fq)|-q^{m-s}\big|\le
\left\{\begin{array}{l}\!\!q^{m-s-\frac{1}{2}}(1+q^{-1})\big((\delta(D-2)+2)+14D^2\delta^2q^{-\frac{1}{2}}\big)
+|\mathcal{S}|\,\delta\, q^{m-s-1}\\[1ex]\hskip8.5cm \textrm{
for }k=2,\\[1ex]
q^{m-s-1}(1+q^{-1})14D^3\delta^2+|\mathcal{S}|\,\delta\, q^{m-s-1}\
\textrm{ for }k=3.\end{array}\right.
\end{align*}
\end{corollary}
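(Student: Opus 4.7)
The plan is to decompose $V(\fq) = V^{\neq}(\fq) \sqcup V^{=}(\fq)$, so that
\[
\bigl||V^{\neq}(\fq)|-q^{m-s}\bigr|
\;\le\;
\bigl||V(\fq)|-q^{m-s}\bigr| + |V^{=}(\fq)|,
\]
and then control each of the two terms on the right-hand side separately.

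For the first term, I would apply Theorem \ref{th: estimate |V| without Pi_(m-1),Pi_m} directly: since $\bfs F_s$ satisfies assumptions $({\sf A}_1)$ and $({\sf A}_2)$ and does not depend on $\Pi_{m-k+1},\ldots,\Pi_m$, and $\mathrm{char}(\fq)$ does not divide $m(m-1)\cdots(m-k+1)$, that theorem yields precisely the two branch estimates ($k=2$ and $k=3$) that appear as the ``main'' summand in the corollary's right-hand side.

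For the second term, I would use the subadditivity
\[
|V^{=}(\fq)| \;\le\; \sum_{(i,j)\in\mathcal{S}}\bigl|V(\fq)\cap\{X_i=X_j\}\bigr|,
\]
and then bound each summand by Proposition \ref{theorem: nb point V_r distinct coordinates}, which gives $|V(\fq)\cap\{X_i=X_j\}|\le \delta\,q^{m-s-1}$ for any $1\le i<j\le m$ under the current hypotheses. Summing over the $|\mathcal{S}|$ pairs produces the $|\mathcal{S}|\,\delta\,q^{m-s-1}$ contribution.

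Adding the two bounds yields the stated estimates in each of the cases $k=2$ and $k=3$. No step is really an obstacle here: the corollary is a direct assembly of Theorem \ref{th: estimate |V| without Pi_(m-1),Pi_m} (for the bulk estimate on $V$) with Proposition \ref{theorem: nb point V_r distinct coordinates} (to discard points with at least two equal coordinates), combined via the triangle inequality and the union bound. The only minor point to be careful about is that the Proposition's conclusion is valid under exactly the hypotheses in force, so that applying it to each pair $(i,j)\in\mathcal{S}$ requires no additional argument.
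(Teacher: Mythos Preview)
Your proposal is correct and follows essentially the same approach as the paper: decompose $V(\fq)=V^{\neq}(\fq)\sqcup V^{=}(\fq)$, apply the triangle inequality, bound $\bigl||V(\fq)|-q^{m-s}\bigr|$ via Theorem~\ref{th: estimate |V| without Pi_(m-1),Pi_m}, and bound $|V^{=}(\fq)|$ by summing the per-pair estimate of Proposition~\ref{theorem: nb point V_r distinct coordinates} over $\mathcal{S}$. There is nothing to add.
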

\begin{proof}
According to Proposition \ref{theorem: nb point V_r distinct
coordinates},
$$|V^=(\fq)|\le \sum_{(i,j)\in\mathcal{I}}\delta\, q^{m-s-1}
\le |\mathcal{S}|\delta\, q^{m-s-1}.$$
It follows that
\begin{align*}
\big||V^{\not=}(\fq)|-q^{m-s}\big|&\le \big||V(\fq)|-q^{m-s}\big|+
|V^{=}(\fq)|\le \big||V(\fq)|-q^{m-s}\big|+|\mathcal{S}|\,\delta
\,q^{m-s-1}.
\end{align*}
The corollary easily follows from Theorem \ref{th: estimate |V|
without Pi_(m-1),Pi_m}.
\end{proof}
%
%------------------------------------------------------------------
%------------------------------------------------------------------
%------------------------------------------------------------------
%------------------------------------------------------------------
%------------------------------------------------------------------
%------------------------------------------------------------------
%------------------------------------------------------------------
%------------------------------------------------------------------
%
\section{Sparse hypersurfaces}
\label{section: hypersurfaces not depending on Pi_m}
Now we shall strengthen the conclusions of the previous section for
hypersurfaces. More precisely, we shall consider the case of a
(smooth symmetric) hypersurface not depending on
$\Pi_{m-k+2},\ldots,\Pi_m$. In other words, in this section our
focus will be put on nonzero symmetric $F\in \fq[X_1,\ldots,X_m]$
satisfying assumptions $({\sf A_1})$ and $({\sf A_2})$ that can be
expressed in terms of the elementary symmetric polynomials $\Pi_1
\klk \Pi_{m-k+1}$ of $\fq[X_1 \klk X_m]$, namely
\begin{equation}\label{def: f_1}
F=G(\Pi_1,\ldots,\Pi_{m-k+1}),
\end{equation}
where $G\in\fq[Y_1,\ldots,Y_{m-k+1}]$. We denote by
$V_F\subset\A^{m}$ the hypersurface defined by $F$ and by
$\Sigma_F\subset\A^m$ the singular locus of $V_F$.

Theorems \ref{th: estimate |V| without Pi_(m-1),Pi_m} provides an
estimate for the cardinality of $V_F(\fq)$ when $F$ does not depend
on $\Pi_{m-k+1},\ldots,\Pi_m$. Therefore, we shall concentrate on
the remaining case when $F$ does depend on $\Pi_{m-k+1}$. By Lemma
\ref{lemma: singular locus F_s not depending on Pi_(m-k+1),..,Pi_m}
we conclude that all the elements in the singular locus $\Sigma_F$
have at most $m-k+1$ pairwise distinct coordinates. In particular,
for any $\bfs x:=(x_1,\ldots,x_m)\in\Sigma_F$ there exists a
partition $\mathcal{I}:=\{I_1,\ldots,I_{m-k+1}\}$ of
$\{1,\ldots,m\}$ into $m-k+1$ nonempty subsets $I_j\subset
\{1,\ldots,m\}$ such that
\begin{equation}\label{eq: singular locus F_s without Pi(m)}
\bfs x\in \mathcal{L}_\mathcal{I}\cap V_F,\end{equation}
where $\mathcal{L_{I}}$ is the linear variety
$$
\mathcal{L}_{\mathcal I}:=
\mathrm{span}(\bfs{v}^{(I_1)},\ldots,\bfs{v}^{(I_{m-k+1})})
  $$
spanned by the vectors $\bfs{v}^{(I_j)}:=
(v_1^{(I_j)},\ldots,v_{m}^{(I_j)})\in\{0,1\}^m$ defined by
$v_l^{(I_j)}:=1$ iff $l\in I_j$.

Fix a partition $\mathcal{I}:=\{I_1,\ldots,I_{m-k+1}\}$ as above.
Assume without loss of generality that $i\in I_i$ for $1\le i\le
m-k+1$, and consider the mapping
$$\begin{array}{rcl}
\bfs\Pi_{m-k+1,\mathcal{I}}:\A^{m-k+1}&\to&\A^{m-k+1},\\
(x_1,\ldots,x_{m-k+1})&\mapsto&
\bfs\Pi_{m-k+1}(x_1\bfs{v}^{(I_1)}+\cdots+x_{m-k+1}\bfs{v}^{(I_{m-k+1})}),
\end{array}$$
with $\bfs{v}^{(I_1)},\ldots,\bfs{v}^{(I_{m-k+1})}$ as above. We
have that any $\bfs x:=(x_1,\ldots,x_m)\in
\mathcal{L}_\mathcal{I}\cap V_F$ belongs to the image
$L_\mathcal{I}(V(G\circ\bfs\Pi_{m-k+1,\mathcal{I}}))$ of
$V(G\circ\bfs\Pi_{m-k+1,\mathcal{I}})$ under $L_\mathcal{I}$.

Now we are able to establish a characterization of the intersections
$\mathcal{L}_\mathcal{I}\cap V_F$.
\begin{theorem}\label{th: singular locus for F without Pi(m)}
Suppose that $\mathrm{char}(\fq)$ does not divide
$m(m-1)\cdots(m-k)$. Let $F$ be a polynomial as in \eqref{def: f_1}
not depending on $\Pi_{m-k+2},\ldots,\Pi_{m}$ such that assumption
$({\sf A}_1)$ holds. Then $\mathcal{L}_\mathcal{I}\cap V_F$ has pure
dimension $m-k$ for any partition
$\mathcal{I}:=\{I_1,\ldots,I_{m-k+1}\}$ of $\{1,\ldots,m\}$.
\end{theorem}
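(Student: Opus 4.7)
The plan is to reduce the assertion to the finite-morphism statement in Proposition \ref{prop: Phi_i is a finite morphism}, and then invoke the basic principle, recalled in Section \ref{section: notation, notations}, that the preimage of a pure-dimensional closed subset under a finite morphism has the same pure dimension.

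First I would identify $\mathcal{L}_\mathcal{I}\cap V_F$ with a preimage under $\bfs\Pi_{m-k+1,\mathcal{I}}$. Since the vectors $\bfs v^{(I_1)},\ldots,\bfs v^{(I_{m-k+1})}$ have pairwise disjoint $\{0,1\}$-supports whose union is $\{1,\ldots,m\}$, they are linearly independent, and $L_\mathcal{I}:\A^{m-k+1}\to \A^m$ is a linear isomorphism onto $\mathcal{L}_\mathcal{I}$. Unfolding definitions gives $F\circ L_\mathcal{I}=G\circ\bfs\Pi_{m-k+1}\circ L_\mathcal{I}=G\circ\bfs\Pi_{m-k+1,\mathcal{I}}$, so
$$L_\mathcal{I}^{-1}\big(\mathcal{L}_\mathcal{I}\cap V_F\big)=V\big(G\circ\bfs\Pi_{m-k+1,\mathcal{I}}\big)=\bfs\Pi_{m-k+1,\mathcal{I}}^{-1}\big(V(G)\big).$$
Because $L_\mathcal{I}$ identifies these two closed sets, it suffices to show that $\bfs\Pi_{m-k+1,\mathcal{I}}^{-1}(V(G))$ has pure dimension $m-k$.

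Next I would apply Proposition \ref{prop: Phi_i is a finite morphism} with its parameter $k$ replaced by $k-1$, so that the morphism under consideration is exactly $\bfs\Pi_{m-k+1,\mathcal{I}}$. The proposition then requires $\mathrm{char}(\fq)\nmid m(m-1)\cdots(m-k+2)$, which is implied by the present theorem's hypothesis $\mathrm{char}(\fq)\nmid m(m-1)\cdots(m-k)$. Hence $\bfs\Pi_{m-k+1,\mathcal{I}}$ is a dominant finite morphism. On the other hand, assumption $({\sf A}_1)$ applied to the single polynomial $G\in\fq[Y_1,\ldots,Y_{m-k+1}]$ says that $\nabla G$ does not vanish on $V(G)\subset\A^{m-k+1}$; by Lemma \ref{lemma: consequences hypothesis A_1} (in the case $s=1$), $V(G)$ is then a smooth hypersurface of pure dimension $m-k$. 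Applying the dimension-of-fibres property of finite morphisms componentwise, $\bfs\Pi_{m-k+1,\mathcal{I}}^{-1}(V(G))$ is of pure dimension $m-k$, as wanted.

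The main step requiring care is the indexing: one must verify that the shift ``$k\mapsto k-1$'' needed to turn Proposition \ref{prop: Phi_i is a finite morphism} into a statement about partitions of $\{1,\ldots,m\}$ into $m-k+1$ parts still fits under the characteristic hypothesis of the theorem, and in particular that this shift accounts for the fact that $F$ is now allowed to depend on $\Pi_{m-k+1}$ (which is precisely why the forbidden characteristics extend one further, down to $m-k$, as compared with Theorem \ref{th: singular locus for F_s without Pi(m-1), Pi(m)}). Once this bookkeeping is settled, the rest of the argument is a formal consequence of the finite-morphism machinery already in place.
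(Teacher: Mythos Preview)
Your proof is correct and follows essentially the same approach as the paper: identify $\mathcal{L}_\mathcal{I}\cap V_F$ with $\bfs\Pi_{m-k+1,\mathcal{I}}^{-1}(V(G))$, invoke Proposition \ref{prop: Phi_i is a finite morphism} (with the index shift $k\mapsto k-1$) to get finiteness, use $({\sf A}_1)$ to ensure $V(G)\subset\A^{m-k+1}$ has pure dimension $m-k$, and conclude via the preimage-under-finite-morphism property. You spell out the index shift and the isomorphism via $L_\mathcal{I}$ a bit more explicitly than the paper does, but the argument is the same.
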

\begin{proof}
Fix a partition $\mathcal{I}:=\{I_1,\ldots,I_{m-k+1}\}$ of
$\{1,\ldots,m\}$. As $G$ does not depend on $Y_{m-k+2},\ldots,Y_m$,
we may consider $G$ as an element of $\cfq[Y_1,\ldots,Y_{m-k+1}]$.
In this sense, the set $\mathcal{L}_\mathcal{I}\cap V_F$ is
isomorphic to the set
$$\{\bfs x\in\A^{m-k+1}: G(\bfs\Pi_{m-k+1,\mathcal{I}}(\bfs x))=\bfs 0\}.$$
Assumption $({\sf A}_1)$ implies that $\{G=0\}\subset\A^{m-k+1}$ is
of pure dimension $m-k$. Proposition \ref{prop: Phi_i is a finite
morphism} asserts that $\bfs\Pi_{m-k+1,\mathcal{I}}$ is a finite
morphism. It follows that
$$\bfs\Pi_{m-k+1,\mathcal{I}}^{-1}(\{G=0\})=\{\bfs x\in\A^{m-k+1}:G(\bfs\Pi_{m-k+1,\mathcal{I}}(\bfs x))=\bfs 0\}$$
is of pure dimension $m-k$.
\end{proof}

\begin{corollary} \label{coro: singular locus V_F}
With hypotheses as in Theorem \ref{th: singular locus for F without
Pi(m)}, if $\dim\Sigma_F= m-k$ and $\mathcal{C}$ is an irreducible
$\fq$-component of $\Sigma_F$ of dimension $m-k$, then there exist a
partition $\mathcal{I}:=\{I_1,\ldots,I_{m-k}\}$ of $\{1,\ldots,m\}$
such that $\mathcal{C}=\mathcal{L}_\mathcal{I}$.
\end{corollary}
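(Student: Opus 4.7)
The plan is to trap $\mathcal{C}$ inside a linear variety $\mathcal{L}_{\mathcal{I}}$ of dimension exactly $m-k$ and then finish by irreducibility and a dimension comparison.

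The first step is to sharpen the bound recalled in the text right before the statement: rather than ``at most $m-k+1$'' pairwise distinct coordinates, every $\bfs x \in \Sigma_F$ in fact has at most $m-k$ pairwise distinct coordinates. This is obtained by running the argument of Lemma \ref{lemma: singular locus F_s not depending on Pi_(m-k+1),..,Pi_m} in the present setting. Concretely, the chain rule yields the factorization
\[
J F(\bfs x) \;=\; J G(\bfs{\Pi}(\bfs x)) \cdot B_{m-k+1}^*(\bfs x) \cdot A_{m-k+1}^*(\bfs x),
\]
where $A_{m-k+1}^*(\bfs x) = (x_j^{i-1})_{1 \le i \le m-k+1,\ 1 \le j \le m}$ and $B_{m-k+1}^*$ is lower triangular with $\pm 1$ on the diagonal. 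Hypothesis $({\sf A}_1)$ makes $J G(\bfs{\Pi}(\bfs x))$ nonzero, so $\bfs x \in \Sigma_F$ forces $\mathrm{rank}\,A_{m-k+1}^*(\bfs x) < m-k+1$; but if the $x_j$'s took $m-k+1$ distinct values then an $(m-k+1) \times (m-k+1)$ Vandermonde submatrix of $A_{m-k+1}^*(\bfs x)$ would be of full rank, a contradiction.

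From this sharpened bound,
\[
\Sigma_F \subset \bigcup_{|\mathcal{J}|=m-k} \mathcal{L}_{\mathcal{J}},
\]
the union being over partitions $\mathcal{J}$ of $\{1,\ldots,m\}$ into exactly $m-k$ nonempty parts (a point with fewer than $m-k$ distinct coordinates lies in $\mathcal{L}_{\mathcal{J}}$ for any such partition refining the one determined by its coordinate classes, and such refinements exist because $m-k<m$). Since $\mathcal{C}$ is irreducible and sits in this finite union of closed sets, it is contained in a single $\mathcal{L}_{\mathcal{I}}$ with $|\mathcal{I}|=m-k$. The linear variety $\mathcal{L}_{\mathcal{I}}$ is irreducible of dimension $m-k$, so the containment $\mathcal{C} \subset \mathcal{L}_{\mathcal{I}}$ between irreducible varieties of the same dimension $m-k$ forces equality: $\mathcal{C} = \mathcal{L}_{\mathcal{I}}$.

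The only real obstacle is the improvement from $m-k+1$ to $m-k$ distinct coordinates; this is strictly stronger than the ``weak consequence'' of Lemma \ref{lemma: singular locus F_s not depending on Pi_(m-k+1),..,Pi_m} cited just before the statement, but it is precisely what the Vandermonde-rank argument in the proof of that lemma delivers in the present hypersurface setting, where $F$ depends on $\Pi_1,\ldots,\Pi_{m-k+1}$.
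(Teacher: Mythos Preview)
Your proof is correct and follows essentially the same approach as the paper's. Both arguments use Lemma \ref{lemma: singular locus F_s not depending on Pi_(m-k+1),..,Pi_m} (applied with parameter $k-1$ in place of $k$, since here $F$ depends on $\Pi_1,\ldots,\Pi_{m-k+1}$) to obtain the containment $\Sigma_F\subset\bigcup_{|\mathcal I|=m-k}\mathcal L_{\mathcal I}$, and then conclude by comparing the irreducible component $\mathcal C$ of dimension $m-k$ with the irreducible linear varieties $\mathcal L_{\mathcal I}$ of the same dimension. The only difference is expository: you reprove the ``at most $m-k$ distinct coordinates'' bound from scratch via the Vandermonde argument, whereas the paper simply cites Lemma \ref{lemma: singular locus F_s not depending on Pi_(m-k+1),..,Pi_m} directly for the inclusion into $\bigcup_{|\mathcal I|=m-k}\mathcal L_{\mathcal I}$.
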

\begin{proof}
According to Lemma \ref{lemma: singular locus F_s not depending on
Pi_(m-k+1),..,Pi_m}, any irreducible component $\mathcal{C}$ as in
the statement of the corollary satisfies
$$\mathcal{C}\subset \bigcup_{\mathcal{I}} \mathcal{L}_\mathcal{I}\cap
V_F \subset \bigcup_{\mathcal{I}}\mathcal{L}_\mathcal{I},$$
where $\mathcal{I}$ runs over all the partitions
$\mathcal{I}:=\{I_1,\ldots,I_{m-k}\}$ of $\{1,\ldots,m\}$ as above.
As each linear variety $\mathcal{L}_\mathcal{I}$ as in the
right-hand side is irreducible of dimension $m-k$, the conclusion in
the corollary readily follows.
\end{proof}

Now assume that $\dim\Sigma_F= m-k$ and let $\mathcal{C}$ be an
irreducible $\fq$-component of $\Sigma_F$ as in Corollary \ref{coro:
singular locus V_F}. Without loss of generality we may assume that
$j\in I_j$ for $1\le j\le m-k$. If $\bfs x\in \mathcal{C}$, then
$\nabla F({{\bfs x}})=\nabla G (\bfs\Pi_{m-k+1}({{\bfs x}}))\cdot
J\,\bfs\Pi_{m-k+1}({{\bfs x}})=\bfs 0$, where $\nabla$ denotes the
gradient operator. As a consequence, we have
$$
\sum_{j=1}^{m-k}\dfrac{\partial G}{\partial
Y_j}(\bfs\Pi_{m-k+1}({{\bfs x}}))\,\dfrac{\partial \Pi_j}{\partial
    X_l}({\bfs x})
=-\dfrac{\partial G}{\partial Y_{m-k+1}}(\bfs\Pi_{m-1}({{\bfs
x}}))\,\dfrac{\partial \Pi_{m-k+1}}{\partial X_l}({\bfs x})$$
for $1\le l\le m$. This shows that the following matrix identity
holds:
$$
\left(
\begin{array}{cccccc}
\dfrac{\partial \Pi_{1}}{\partial X_1}({\bfs x}) & \cdots &
\dfrac{\partial \Pi_{m-k}}{\partial X_{1}}({\bfs x})
\\
\vdots & & \vdots
\\
\dfrac{\partial \Pi_1}{\partial X_{m}}({\bfs x}) & \cdots &
\dfrac{\partial \Pi_{m-k}}{\partial X_{m}}({\bfs x})
\end{array}
\right)
\begin{pmatrix} \dfrac{\partial G}{\partial Y_1}(\bfs\Pi_{m-k+1}({{\bfs
x}}))\, \\ \vdots \\ \dfrac{\partial G}{\partial
Y_{m-k}}(\bfs\Pi_{m-k+1}({{\bfs x}}))\end{pmatrix}$$
$$=\dfrac{\partial G}{\partial Y_{m-k+1}}(\bfs\Pi_{m-k+1}({{\bfs
x}}))\begin{pmatrix} -\dfrac{\partial \Pi_{m-k+1}}{\partial X_1}({\bfs x})\\
\vdots \\ -\dfrac{\partial \Pi_{m-k+1}}{\partial X_{m}}({\bfs
x})\end{pmatrix}.
$$
Considering the first $m-k$ equations of this system we obtain the
$(m-k)\times(m-k)$ system
\begin{equation}\label{eq: system a}
B({\bfs x})
\begin{pmatrix} \dfrac{\partial G}{\partial Y_1}(\bfs\Pi_{m-k+1}({{\bfs
x}}))\, \\ \vdots \\ \dfrac{\partial G}{\partial
Y_{m-k}}(\bfs\Pi_{m-k+1}({{\bfs x}}))\end{pmatrix}
=\dfrac{\partial G}{\partial Y_{m-k+1}}(\bfs\Pi_{m-k+1}({{\bfs x}}))
\begin{pmatrix} -\dfrac{\partial \Pi_{m-k+1}}{\partial X_1}({\bfs x})\\
\vdots \\ -\dfrac{\partial \Pi_{m-k+1}}{\partial X_{m-k}}({\bfs
x})\end{pmatrix},
\end{equation}
where $B\in \fq[\bfs X]^{(m-k) \times (m-k)}$ is the matrix
\begin{equation}\label{eq: definition B}
B:=\left(
\begin{array}{cccccc}
\dfrac{\partial \Pi_{1}}{\partial X_1} &  \cdots & \dfrac{\partial
\Pi_{m-k}}{\partial X_{1}}
\\
\vdots &  & \vdots
\\
\dfrac{\partial \Pi_1}{\partial X_{m-k}} & \cdots & \dfrac{\partial
\Pi_{m-k}}{\partial X_{m-k}}
\end{array}
\right).\end{equation}

Suppose that $\frac{\partial G}{\partial
Y_{m-k+1}}(\bfs\Pi_{m-k+1}({{\bfs x}}))=0$ for any $\bfs
x\in\mathcal{C}$. Since $\nabla G(\bfs y)\not=\bfs 0$ for any $\bfs
y\in V(G)$ and $G(\bfs\Pi_{m-k+1}(\bfs x))=0$ for any $\bfs
x\in\mathcal{C}$, the vector in the left-hand side of \eqref{eq:
system a} is a nonzero solution of \eqref{eq: system a} for any
$\bfs x\in\mathcal{C}$, which implies that the matrix $B(\bfs x)$ is
singular for any $\bfs x\in\mathcal{C}$. Now we obtain an explicit
expression for the determinant of $B$.
\begin{lemma}\label{lemma: det B} We have
$$\det(B)=(-1)^{\frac{(m-k-1)(m-k)}{2}}\prod_{1\le
j < k\le m-k}(X_j-X_k).$$
\end{lemma}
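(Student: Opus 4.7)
The plan is to reduce this determinant computation to the Vandermonde–triangular factorization that was used in the proof of Lemma \ref{lemma: determinant Jacobian Pi}, but restricted to the first $m-k$ rows/columns.

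First I would observe that $B$ is simply the transpose of the upper-left $(m-k)\times(m-k)$ block of the full Jacobian matrix $J\bfs\Pi$: indeed, by the very definition of $B$ we have $B_{i,j}=\partial\Pi_j/\partial X_i=(J\bfs\Pi)_{j,i}$ for $1\le i,j\le m-k$. Since the transpose preserves the determinant, it suffices to compute the determinant of the leading $(m-k)\times(m-k)$ block of $J\bfs\Pi$.

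Next, I would invoke the factorization $J\bfs\Pi=B_m\cdot A_m$ used in the proof of Lemma \ref{lemma: determinant Jacobian Pi}, where $B_m$ is \emph{lower} triangular with diagonal $((-1)^{l-1})_{1\le l\le m}$ and $A_m$ is the Vandermonde matrix $(X_j^{i-1})_{1\le i,j\le m}$. The key point is that because $B_m$ is lower triangular, for any $1\le i,j\le m-k$ the entry $(J\bfs\Pi)_{i,j}=\sum_{l=1}^{i}(B_m)_{i,l}(A_m)_{l,j}$ only involves indices $l\le i\le m-k$; hence the leading $(m-k)\times(m-k)$ block of $J\bfs\Pi$ factors as $B_m^{\sharp}\cdot A_m^{\sharp}$, where $B_m^{\sharp}$ is the leading $(m-k)\times(m-k)$ block of $B_m$ (again lower triangular with the same diagonal $(-1)^{l-1}$) and $A_m^{\sharp}:=(X_j^{i-1})_{1\le i,j\le m-k}$ is the Vandermonde matrix in the first $m-k$ variables only.

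Finally, I would compute the two determinants separately: $\det B_m^{\sharp}=\prod_{l=1}^{m-k}(-1)^{l-1}=(-1)^{(m-k-1)(m-k)/2}$ because $B_m^{\sharp}$ is triangular, and $\det A_m^{\sharp}$ is the classical Vandermonde determinant in $X_1,\ldots,X_{m-k}$. Multiplying these gives the claimed formula after rewriting the Vandermonde product in the form stated. No real obstacle arises; the only delicate point is to justify that the factorization of the full Jacobian restricts correctly to the leading block, which follows immediately from the lower-triangularity of $B_m$.
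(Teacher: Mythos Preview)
Your proposal is correct and follows essentially the same route as the paper's proof: the paper applies the identity \eqref{eq: derivative Pi_i} directly to write $B^t$ as the product of the lower-triangular matrix (with diagonal $(-1)^{l-1}$) and the $(m-k)\times(m-k)$ Vandermonde matrix $A_{m-k}$, and then concludes. Your phrasing---restricting the full factorization $J\bfs\Pi=B_m\cdot A_m$ to the leading $(m-k)\times(m-k)$ block, justified by the lower-triangularity of $B_m$---is just a slightly different way of packaging the same computation.
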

\begin{proof}
From \eqref{eq: derivative Pi_i} we deduce that $B^t$ can be
factored as
\begin{equation} \label{eq: factorization B} B^t=\left(
\begin{array}{ccccc}
    1 & \ 0 & 0 &  \dots & 0
    \\
    \ \ \Pi_1 &  -1 & 0 &  &
    \\
    \Pi_2 & \ \ -\Pi_1 & 1 & \ddots & \vdots
    \\
    \vdots &\vdots  & \vdots & \ddots & 0
    \\
    \Pi_{m-k-1} & -\Pi_{m-k-2} &\Pi_{m-k-3} & \cdots &\!\! (-1)^{m-k-1}
\end{array}
\!\!\right)\cdot A_{m-k},
\end{equation}
where
$A_{m-k}$ is the $(m-k)\times (m-k)$--Vandermonde matrix
$$
A_{m-k}:=(X_j^{i-1})_{1\leq i,j\leq m-k}.
$$
This readily implies the lemma. \end{proof}

It follows that
$$\mathcal{C}=\bigcup_{1\le
k < l\le m-k}\mathcal{C}\cap\{X_k=X_l\}.$$
As the union in the right-hand side has dimension at most $m-k-1$,
we conclude that $\dim\Sigma_F\le m-k-1$, contradicting thus our
hypotheses.

Therefore, neither $\frac{\partial G}{\partial
Y_{m-k+1}}(\bfs\Pi_{m-k+1})$ nor $\det(B)$ vanish identically on
$\mathcal{C}$. In particular, there exists a nonempty Zariski open
subset $\mathcal{U}$ of $\mathcal{C}$ such that, for any $\bfs
x\in\mathcal{U}$, we have that $\frac{\partial G}{\partial
Y_{m-k+1}}(\bfs\Pi_{m-k+1}({{\bfs x}}))\not=0$, the matrix $B(\bfs
x)$ is nonsingular, and the vector $\big(\frac{\partial G}{\partial
Y_1}(\bfs\Pi_{m-k+1}({{\bfs x}})),\ldots, \frac{\partial G}{\partial
Y_{m-k}}(\bfs\Pi_{m-k+1}({{\bfs x}}))\big)$ is the unique solution
of \eqref{eq: system a}. The Cramer rule implies
\begin{equation}\label{eq: identity a_j}
\frac{\frac{\partial G}{\partial Y_j}(\bfs\Pi_{m-k+1}({{\bfs
x}}))}{\frac{\partial G}{\partial Y_{m-k+1}}(\bfs\Pi_{m-k+1}({{\bfs
x}}))}=\frac{\det(B^j({\bfs x}))}{\det(B({\bfs x}))} \quad (1 \leq j
\leq m-k),\end{equation}
where $B^j\in \fq[\bfs X]^{(m-k) \times (m-k)}$ is the matrix
obtained by replacing the $j$th column of $B$ by the vector
$(-{\partial \Pi_{m-k+1}}/{\partial X_1} \klk -{\partial
\Pi_{m-k+1}}/{\partial X_{m-k}})^t$. Next we obtain an explicit
expression for $\det(B^j)$.
\begin{proposition}\label{prop: det Bj=0} For $1\le j\le m-k$, we have
$\det(B^j)=0$.
\end{proposition}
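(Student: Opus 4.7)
The plan is to factor $B^j$ through the Vandermonde-like structure used in Lemma \ref{lemma: det B} and deduce $\det(B^j)=0$ from a column-rank argument. First, I would extend the factorization \eqref{eq: factorization B} to the rectangular setting by considering the $(m-k)\times(m-k+1)$ ``extended Jacobian''
$$\mathcal{D} := \bigl[\partial \Pi_1 \mid \partial \Pi_2 \mid \cdots \mid \partial \Pi_{m-k+1}\bigr]$$
whose first $m-k$ columns form $B$ and whose last column is the negative of the replacement column defining $B^j$. The chain-rule identities \eqref{eq: derivative Pi_i} yield the factorization $\mathcal{D} = \mathcal{A}\cdot \mathcal{M}$, where $\mathcal{A}$ is the $(m-k)\times(m-k+1)$ matrix with entries $X_l^{i-1}$ and $\mathcal{M}$ is an upper-triangular $(m-k+1)\times(m-k+1)$ matrix whose $(j,i)$-entry equals $(-1)^{j-1}\Pi_{i-j}$ for $j\le i$.

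Since $B^j$ coincides, up to a column permutation and a sign, with the maximal minor of $\mathcal{D}$ obtained by deleting column $j$, I would access $\det(B^j)$ through the unique (up to scalar) syzygy among the columns of $\mathcal{A}$: namely, the coefficient vector of $\prod_{j=1}^{m-k}(T-X_j)$, read at $T=X_l$, annihilates every row of $\mathcal{A}$. Transferring this syzygy through $\mathcal{M}^{-1}$ (whose entries are the complete homogeneous symmetric polynomials $h_r$, by the standard Newton-type relation $\sum_{i=0}^{r}(-1)^{i}\Pi_i h_{r-i}=0$) produces an explicit polynomial relation $\sum_{i=1}^{m-k+1} d_i\,\partial \Pi_i = 0$ among the columns of $\mathcal{D}$. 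This relation expresses the replacement column as a polynomial combination of $\partial \Pi_1,\dots,\partial \Pi_{m-k}$, and multilinearity of the determinant in column $j$ then collapses $\det(B^j)$ to a scalar multiple of $\det(B)$.

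The main obstacle is showing that this scalar multiple is zero. The coefficient is built from the entries of $\mathcal{M}^{-1}$ and the symmetric polynomials $e_r(X_1,\dots,X_{m-k})$ arising from the $\mathcal{A}$-syzygy, so the required cancellation should follow either from a direct symmetric-function identity between these $e_r$ and the relevant $h_r$, or, more realistically, from additionally imposing the equalities $X_l = X_{l'}$ for $l, l'$ in a common part of the partition $\mathcal{I}$ (which describe the component $\mathcal{C}\subset\Sigma_F$ on which the Cramer-rule identity \eqref{eq: identity a_j} is ultimately applied). A back-up route is induction on $k$, with Lemma \ref{lemma: det B} as the base case; the inductive step would peel off one elementary symmetric polynomial at a time, using assumption on $\mathrm{char}(\fq)$ to keep the relevant factorials invertible.
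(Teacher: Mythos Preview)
Your approach is essentially the paper's own: factor $(B^j)^t$ as a product of an $(m-k)\times(m-k+1)$ matrix in the $\Pi_i$'s and the rectangular Vandermonde matrix $A_{m-k}^*=(X_l^{i-1})$, apply Cauchy--Binet (your syzygy/Cramer argument is the same computation repackaged), and reduce to an alternating sum of products of complete and elementary symmetric polynomials. The paper arrives at $\det(B^j)=\det(B)\sum_{i\ge j}(-1)^{i-j}H_{i-j}\,\Pi^*_{m-k+1-i}$, where $H_r$ is complete homogeneous in $X_1,\dots,X_m$ (via the Trudi formula, since the entries of $C_j$ are the $\Pi_i$'s in all $m$ variables) and $\Pi^*_r$ is elementary in $X_1,\dots,X_{m-k}$ (from the minors of $A_{m-k}^*$), and then claims the sum vanishes by the Newton identity $\sum_i(-1)^i h_i e_{r-i}=0$.

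You are right that the cancellation is the crux, and in fact it does not go through: the Newton identity requires $h$ and $e$ to live in the \emph{same} variable set, which they do not here. Concretely, for $m=4$, $k=2$ one computes
\[
\det(B^1)=(X_1-X_2)\bigl(X_3^2+X_3X_4+X_4^2\bigr),
\]
which is a nonzero polynomial and does not vanish on any linear space $\mathcal{L}_{\mathcal I}$ with $1\in I_1$, $2\in I_2$. So neither your ``direct symmetric-function identity'' nor ``imposing the equalities $X_l=X_{l'}$ from the partition $\mathcal I$'' can force the scalar to zero; the coefficient $d_j$ in your syzygy $\sum_i d_i\,\partial\Pi_i=0$ is genuinely nonzero for some $j\le m-k$. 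The paper's proof has the same gap: in the step $H_{i-1}=\sum_l X_{m-k+1}^{\,l}H^*_{i-1-l}$ followed by $\sum_i(-1)^iH^*_i\Pi^*_{r-i}=0$, the second identity would need $H^*_r$ to be complete homogeneous in $X_1,\dots,X_{m-k}$, but the first only strips the single variable $X_{m-k+1}$ from $H_r$, not all of $X_{m-k+1},\dots,X_m$.
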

\begin{proof}
For $j=1$, combining \eqref{eq: factorization B} and \eqref{eq:
derivative Pi_i} for $i=m-k+1$ we see that
\begin{align}\label{eq: factorization B1}
(B^1)^t&=C_1 \cdot A_{m-k}^*\\ \notag &:=\left(
\begin{array}{ccccccc}
-\Pi_{m-k} & \Pi_{m-k-1} & \ldots &  \ldots & (-1)^{m-k}\Pi_1
&(-1)^{m-k+1}
\\
\Pi_1 &  -1 & 0 & \ldots & 0 &0
\\
\Pi_2 & -\Pi_1 & 1 & \ddots & \vdots & \vdots
\\
\vdots &\vdots  & \vdots & \ddots & 0 & 0
\\
\Pi_{m-k-1} & -\Pi_{m-k-2} &\Pi_{m-k-3} & \ldots &\!\! (-1)^{m-k-1}
& 0
\end{array}
\!\!\right) \cdot A_{m-k}^*,
\end{align}
where $A_{m-k}^*:=(X_j^{i-1})_{1\leq i\leq m-k+1, \, 1 \leq j \leq
m-k}.$ By the Cauchy--Binet formula, it follows that
\begin{equation}\label{eq: determinant B1t}
\det (B^1)=\sum_{i=1}^{m-k+1} \det (C_{1,i})\cdot \det
(A_{m-k,i}^*),
\end{equation}
where $C_{1,i}$ and $A_{m-k,i}^*$ are the $(m-k)\times
(m-k)$--matrices obtained by removing the $i$th column of $C_1$ and
the $i$th row of $A_{m-k}^*$ respectively. From \cite[Lemma
2.1]{Ernst00} (see also \cite[Exercise 281]{FaSo65}) we deduce that
\begin{equation}\label{eq: determinant Am-1i}
\det(A_{m-k,i}^*)= \Pi_{m-k+1-i}^*\cdot \prod_{1\le j < k\le
m-k}(X_j-X_k)=(-1)^{\frac{(m-k)(m-k-1)}{2}} \,
\Pi_{m-k+1-i}^*\,\det(B),
\end{equation}
where $\Pi_{m-k+1-i}^*$ is the $(m-k+1-i)$-th elementary symmetric
polynomial in $X_1 \klk X_{m-k}$.

By elementary calculations we deduce that, for $1 \leq i \leq
m-k+1$,
\begin{equation} \label{eq: determinant C1i}
\det (C_{1,i})=
    (-1)^{\frac{(m-k+1)(m-k+2)}{2}+i} \det(\mathcal{T}_{i-1}),
\end{equation}
where $\mathcal{T}_{0}:=1$ and, for $i\geq 2$, $\mathcal{T}_{i-1}$
is the $(i-1) \times (i-1)$ Toeplitz--Hessenberg matrix
$$\mathcal{T}_{i-1}:=\left(
\begin{array}{ccccc}
\Pi_1 & -1 & 0 &  \dots & 0
\\
-\Pi_2\ \ &  \ddots & \ddots&  & \vdots
\\
\vdots &\ddots  & \ddots & \ddots & 0
\\
\vdots &  & \ddots & \ddots &\!\! -1\,
\\
(-1)^{i}\Pi_{i-1} & \dots & \dots & -\Pi_2&\!\! \Pi_1
\end{array}
\!\right).$$
By the Trudi formula (see \cite[Ch. VII]{Muir60}; see also
\cite[Theorem 1]{Merca13}) we deduce that
$\det(\mathcal{T}_{i-1})=H_{i-1}$, where $H_{i-1}$ is the $(i-1)$th
complete homogeneous symmetric polynomial of $\fq[\bfs X]$ (see
\cite[Section 4]{Merca13}). Therefore, combining \eqref{eq:
determinant B1t}, \eqref{eq: determinant Am-1i} and \eqref{eq:
determinant C1i} we conclude that
\begin{align*}
\det(B^1)&=\det(B)\, \sum_{i=1}^{m-k+1}(-1)^{i-1}H_{i-1} \Pi_{m-k+1-i}^*\\
&=\det(B)\,
\sum_{i=1}^{m-k+1}\sum_{l=0}^{i-1}(-1)^{i-1}X_{m-k+1}^lH_{i-1-l}^*
\Pi_{m-k+1-i}^*\\
&=\det(B)\,
\sum_{l=0}^{m-k}\Bigg(\sum_{i=l+1}^{m-k+1}(-1)^{i-1}H_{i-1-l}^*
\Pi_{m-k+1-i}^*\Bigg)X_{m-k+1}^l\\
&=\det(B)\, \sum_{l=0}^{m-k}\Bigg(\sum_{i=0}^{m-k-l}(-1)^{i+l}H_i^*
\Pi_{m-k-l-i}^*\Bigg)X_{m-k-1}^l=0,
\end{align*}
where the last identity is due to the well-known relations between
complete and elementary symmetric polynomials (see, e.g., \cite[\S
I.2, (2.6')]{Macdonald98}).

As the proof for the case $j \geq 2$ is similar, we shall be brief.
As in the case $j=1$, we have that $(B^j)^t$ can be factored as
\begin{align}\label{eq: factorization Bj}
(B^j)^t&:=C_j \cdot A_{m-k}^*\\\notag&=\left(
\begin{array}{cccccc}
1 & 0 & &  \ldots & 0 & 0
\\
 \Pi_1 &  -1 & 0 &  &
\\
\vdots &\vdots  & & \ddots &  & \vdots
\\
\Pi_{j-2} & -\Pi_{j-3} & \ldots &  \ldots & 0 & 0
\\
-\Pi_{m-k} & \Pi_{m-k-1} & \ldots &  \ldots & (-1)^{m-k}\Pi_1&
(-1)^{m-k+1}
\\
\Pi_{j} & -\Pi_{j-1} & \Pi_{j-2} &  \ldots & 0 & 0
\\
\vdots &\vdots  & \vdots & \ddots & \vdots & \vdots
\\
\Pi_{m-k-1} & -\Pi_{m-k-2} &\Pi_{m-k-3} & \cdots & (-1)^{m-k-1} & 0
\end{array}
\!\!\right) \cdot A_{m-k}^*.
\end{align}

Applying the Cauchy--Binet formula to the factorization \eqref{eq:
factorization Bj}, we obtain
\begin{equation}\label{eq: determinant Bjt}
\det (B^j)=\sum_{i=1}^{m-k+1} \det (C_{j,i})\cdot \det
(A_{m-k,i}^*),
\end{equation}
where $C_{j,i}$ and $A_{m-k,i}^*$ are the $(m-k)\times
(m-k)$--matrices obtained by removing the $i$th column of $C_j$ and
the $i$th row of $A_{m-k}^*$ respectively. Elementary calculations
show that
\begin{equation} \label{eq: determinant Cji}
\det (C_{j,i})=\left\{\begin{array}{cl}
    0 & \textrm{for }1\le i \le j-1, \\
    %(-1)^{ \frac{(m+1)m}{2}+1} & \textrm{for } i=j, \\
    (-1)^{\frac{(m-k+1)(m-k+2)}{2}+1+\alpha_{ij}} \det(\mathcal{T}_{i-j}) & \textrm{for
    } j \leq i \leq m-k+1,
\end{array}\right.
\end{equation}
where $\alpha_{ij}:=\frac{j(j-1)-i(i-1)}{2}+(j-1)(i-j)+\lfloor
\frac{i-j}{2} \rfloor=(j-i)\frac{i-j+1}{2}+\lfloor \frac{i-j}{2}
\rfloor\equiv i-j\mod 2$ and $\mathcal{T}_{i-j}$ is the $(i-j)
\times (i-j)$ Toeplitz--Hessenberg matrix

$$\mathcal{T}_{i-j}:=\left(
\begin{array}{ccccc}
\Pi_1 & -1 & 0 &  \dots & 0
\\
-\Pi_2\ \ &  \ddots & \ddots&  & \vdots
\\
\vdots &\ddots  & \ddots & \ddots & 0
\\
\vdots &  & \ddots & \ddots &\!\! -1\,
\\
(-1)^{i-j+1}\Pi_{i-j} & \dots & \dots & -\Pi_2&\!\! \Pi_1
\end{array}
\!\right)$$
for $i-j \geq 1$, while $\mathcal{T}_{0}:=1$. The Trudi formula
implies $\det(\mathcal{T}_{i-j})=H_{i-j}$, where $H_{i-j}$ is the
$(i-j)$th complete homogeneous symmetric polynomial of $\fq[\bfs
X]$. Therefore, combining \eqref{eq: determinant Bjt}, \eqref{eq:
determinant Am-1i} and \eqref{eq: determinant Cji} we conclude that
\begin{align*}
\det(B^j)&=\det(B)\, \sum_{i=j}^{m-k+1}(-1)^{i-j}H_{i-j} \Pi_{m-k+1-i}^*\\
&=\det(B)\,
\sum_{i=j}^{m-k+1}\sum_{l=0}^{i-j}(-1)^{i-j}X_{m-k+1}^kH_{i-j-l}^*
\Pi_{m-k+1-i}^*\\
&=\det(B)\,
\sum_{l=0}^{m-k+1-j}\Bigg(\sum_{i=l+j}^{m-k+1}(-1)^{i-j}H_{i-j-l}^*
\Pi_{m-k+1-i}^*\Bigg)X_{m-k+1}^l\\
&=\det(B)\,
\sum_{l=0}^{m-k+1-j}\Bigg(\sum_{i=0}^{m-k+1-l-j}(-1)^{i+l}H_i^*
\Pi_{m-k+1-l-j-i}^*\Bigg)X_{m-k+1}^l=0,
\end{align*}
where the last identity is due to, e.g., \cite[\S I.2,
(2.6')]{Macdonald98}.
This finishes the proof of the proposition.
\end{proof}
It follows that
$$\frac{\partial G}{\partial
Y_j}(\bfs\Pi_{m-k+1}({{\bfs x}}))=0\quad(1\le j\le m-k)$$
for any $\bfs x\in\mathcal{U}$, and thus for any $\bfs x\in
\mathcal{C}=\mathcal{L}_\mathcal{I}$. If there is $1\le j\le m-k$
such that $\partial G/Y_j\not=0$, as $G$ is square-free we have that
the resultant $\textrm{Res}(G,\partial G/Y_j)$ is nonzero. This
implies that $G$ and $\partial G/\partial Y_j$ intersect properly.
Since $\Pi_{m-k+1,\mathcal{I}}(\mathcal{C})\subset V(G)\cap
V(\partial G/\partial Y_j)$ and $V(G)$ $V(\partial G/\partial Y_j)$
intersect properly in $\A^{m-k+1}$, we conclude that $\dim V(G)\cap
V(\partial G/\partial Y_j)\le m-k-1$, and thus
$\dim(\Pi_{m-k+1,\mathcal{I}}(\mathcal{C}))\le m-k-1$. Therefore,
Proposition \ref{prop: Phi_i is a finite morphism} implies that
$\mathcal{C}$ has dimension at most $m-k-1$, contradicting our
hypothesis.

Finally, we analyze the dimension of $\Sigma_F$  when $F$ is of the
form
$$F=f(\Pi_{m-k+1})\quad(f\in\fq[T]\textrm{ square-free}).$$
%
%We have $\nabla F=f'(\Pi_{m-1})\nabla \Pi_{m-1}$. Since $f$ and $f'$
%are relatively prime, it follows that
%%
%$$\Sigma_{F}=\{f(\Pi_{m-1})=0,f'(\Pi_{m-1})\nabla\Pi_{m-1}=\bfs 0\}=
%\{f(\Pi_{m-1})=0,\nabla\Pi_{m-1}=\bfs 0\}.$$
%%
%In other words, any $\bfs x\in\Sigma_{F}$ satisfies the equalities
%%
%$$f(\Pi_{m-1})=0,\ \Pi_{m-2}-X_{j} \Pi_{m-3} +
%X_{j}^2 \Pi_{m-4} +\cdots+ (-1)^{m-1-1} X_{j}^{m-2}=0\quad (1\leq j
%\leq m-1).$$
%%
According to \eqref{eq: singular locus F_s without Pi(m)},
\begin{equation}\label{eq: sigma_F caso particular}
\Sigma_F\subset \bigcup_{\mathcal{I}}
\{f(\Pi_{m-k+1})=0\}\cap\mathcal{L}_\mathcal{I},
\end{equation}
where $\mathcal{I}$ runs over the partitions of $\{1,\ldots,m\}$ as
above. Observe that $\Pi_{m-k+1}$ does not vanish on any
$\mathcal{L}_\mathcal{I}$ as above. Indeed, let
$\mathcal{I}:=\{I_1,\ldots,I_{m-k+1}\}$ be one such partition, and
let $\mathcal{L}_{\mathcal I}:=
\mathrm{span}(\bfs{v}^{(I_1)},\ldots,\bfs{v}^{(I_{m-k+1})})$ be the
corresponding linear variety, where each $\bfs{v}^{(I_j)}:=
(v_1^{(I_j)},\ldots,v_{m}^{(I_j)})\in\{0,1\}^m$ is defined by
$v_l^{(I_j)}:=1$ iff $l\in I_j$. The image of the map
$(x_1,\ldots,x_{m-k+1})\mapsto
x_1\bfs{v}^{(I_1)}+\cdots+x_{m-k+1}\bfs{v}^{(I_{m-k+1})}$
parametrizes $\mathcal{L}_{\mathcal I}$. In particular,
$\Pi_{m-k+1}$ does not vanish on $\mathcal{L}_\mathcal{I}$ if and
only if the polynomial
$\Pi_{m-k+1}(X_1\bfs{v}^{(I_1)}+\cdots+X_{m-k+1}\bfs{v}^{(I_{m-k+1})})$
is nonzero. By substituting 1 for $X_i$ for $1\le i\le m-k+1$ we
obtain
$\Pi_{m-k+1}(\bfs{v}^{(I_1)}+\cdots+\bfs{v}^{(I_{m-k+1})})={m\choose
m-k+1}$, which is nonzero if $\mathrm{char}(\fq)$ does not divide
$m(m-1)\cdots(m-k+2)$. This proves our claim.

It follows $f(\Pi_{m-k+1})$ does not vanish on any
$\mathcal{L}_\mathcal{I}$, and thus all the intersections in the
right-hand side of \eqref{eq: sigma_F caso particular} have
dimension at most $m-k-1$, which contradicts our hypothesis. As a
consequence, we have the following result.
\begin{theorem}\label{th: sing locus V(F) without Pi_m}
Suppose that $\mathrm{char}(\fq)$ does not divide
$m(m-1)\cdots(m-k)$. For $F$ as in \eqref{def: f_1}, satisfying
assumptions $({\sf A_1})$ and $({\sf A_2})$, the singular locus
$\Sigma_F$ has dimension at most $m-k-1$.
\end{theorem}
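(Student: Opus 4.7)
Argue by contradiction: assume $\dim \Sigma_F \ge m-k$. By the analog of Lemma \ref{lemma: singular locus F_s not depending on Pi_(m-k+1),..,Pi_m} applied to $F$ (which does not depend on $\Pi_{m-k+2},\ldots,\Pi_m$), every singular point of $V_F$ has at most $m-k+1$ pairwise distinct coordinates, so $\Sigma_F \subset \bigcup_{\mathcal{I}} \mathcal{L}_\mathcal{I} \cap V_F$. Corollary \ref{coro: singular locus V_F} then pins down any $(m-k)$-dimensional irreducible $\fq$-component $\mathcal{C}$ of $\Sigma_F$ as a whole linear space $\mathcal{L}_\mathcal{I}$ indexed by a partition $\mathcal{I} = \{I_1,\ldots,I_{m-k}\}$ into exactly $m-k$ nonempty blocks. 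The task thus reduces to showing that $\nabla F$ cannot vanish identically on any such $\mathcal{L}_\mathcal{I}$.

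Expanding $\nabla F = \nabla G(\bfs\Pi_{m-k+1}) \cdot J\,\bfs\Pi_{m-k+1}$ by the chain rule and restricting to the first $m-k$ columns yields the square linear system \eqref{eq: system a}, with coefficient matrix $B$ of Lemma \ref{lemma: det B}. I would then split on whether $(\partial G/\partial Y_{m-k+1})\circ \bfs\Pi_{m-k+1}$ vanishes identically on $\mathcal{C}$. If it does, assumption $({\sf A}_1)$ guarantees $\nabla G \ne \bfs 0$ on $V(G)\supset \bfs\Pi_{m-k+1}(\mathcal{C})$, so the homogeneous system has a nonzero solution at every point of $\mathcal{C}$; hence $\det B \equiv 0$ on $\mathcal{C}$. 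But Lemma \ref{lemma: det B} identifies $\det B$ with a product $\pm \prod_{1\le i<j\le m-k}(X_j-X_i)$ that cannot vanish on any $\mathcal{L}_\mathcal{I}$ of the assumed partition structure, a contradiction.

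Otherwise $(\partial G/\partial Y_{m-k+1})\circ \bfs\Pi_{m-k+1}$ is nonzero on a dense open $\mathcal{U} \subset \mathcal{C}$, and Cramer's rule combined with the crucial identity $\det B^j \equiv 0$ of Proposition \ref{prop: det Bj=0} forces $(\partial G/\partial Y_j)\circ \bfs\Pi_{m-k+1} \equiv 0$ on $\mathcal{C}$ for every $1 \le j \le m-k$. If some $\partial G/\partial Y_j$ is a nonzero polynomial in $\bfs Y$, then the square-freeness of $G$ (coming from $({\sf A}_1)$ via Lemma \ref{lemma: consequences hypothesis A_1}) gives $\dim(V(G)\cap V(\partial G/\partial Y_j)) \le m-k-1$ in $\A^{m-k+1}$, and the finite-morphism Proposition \ref{prop: Phi_i is a finite morphism} (applied with $k$ replaced by $k-1$, permitted because our characteristic hypothesis is stronger) transfers this bound back to $\mathcal{C}$, a contradiction. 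In the remaining subcase $G$ depends only on $Y_{m-k+1}$, so $G = f(Y_{m-k+1})$ with $f$ square-free; here I would check that $\Pi_{m-k+1}$ does not vanish on $\mathcal{L}_\mathcal{I}$ by evaluating its pullback at $X_1 = \cdots = X_{m-k+1} = 1$, which gives $\binom{m}{m-k+1}$, nonzero under our characteristic assumption. Consequently $f(\Pi_{m-k+1})$ cuts a proper subvariety of $\mathcal{L}_\mathcal{I}$ of dimension at most $m-k-1$, again a contradiction.

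The principal obstacle is Proposition \ref{prop: det Bj=0} (the vanishing $\det B^j \equiv 0$): establishing this needs an explicit factorization $(B^j)^t = C_j \cdot A^*_{m-k}$ with $A^*_{m-k}$ a rectangular Vandermonde, the Cauchy--Binet formula to expand the determinant, identification of the resulting minors of $C_j$ as Toeplitz--Hessenberg determinants, the Trudi formula to evaluate them as complete homogeneous symmetric polynomials $H_i$, and a final Newton-type identity to telescope the resulting sum to zero. Once that identity is in hand, the rest of the argument is a clean case analysis driven by the smoothness hypotheses and by the finite-morphism structure of $\bfs\Pi_{m-k+1,\mathcal{I}}$.
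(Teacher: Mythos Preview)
Your proposal is correct and follows essentially the same route as the paper's argument: the contradiction setup via Corollary~\ref{coro: singular locus V_F}, the chain-rule reduction to the square system~\eqref{eq: system a}, the two-way split on whether $(\partial G/\partial Y_{m-k+1})\circ\bfs\Pi_{m-k+1}$ vanishes on $\mathcal{C}$, the use of Lemma~\ref{lemma: det B} and Proposition~\ref{prop: det Bj=0} in each branch, and the final treatment of the exceptional case $G=f(Y_{m-k+1})$ via the evaluation $\Pi_{m-k+1}(1,\dots,1)=\binom{m}{m-k+1}$ all mirror the paper exactly. Two tiny remarks: when you invoke Cramer's rule you should also note that $\det B$ is not identically zero on $\mathcal{C}=\mathcal{L}_{\mathcal{I}}$ (which you already argued in the first branch), so the open set $\mathcal{U}$ can be taken where both $\det B$ and $(\partial G/\partial Y_{m-k+1})\circ\bfs\Pi_{m-k+1}$ are nonzero; and in the last subcase the parametrization of $\mathcal{L}_{\mathcal{I}}$ (with $\mathcal{I}$ a partition into $m-k$ blocks from Corollary~\ref{coro: singular locus V_F}) has $m-k$ free variables, not $m-k+1$, though the evaluation at all ones still lands at $(1,\dots,1)\in\A^m$ and gives $\binom{m}{m-k+1}$ just the same.
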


Let $d:=\deg F$ and write $F=F^{(d)}+F^*$, where $F^{(d)}$ denotes
the homogeneous component of $F$ of degree $d$. Observe that, if $F$
satisfies assumptions $({\sf A_1})$ and $({\sf A_2})$, then so does
$F^{(d)}$.

As before, we consider the embedding of $\A^m$ into the projective
space $\Pp^m$ defined by the mapping $(x_1,\ldots,
x_m)\mapsto(1:x_1:\dots:x_m)$ and the projective closure
$\mathrm{pcl}(V_F)\subset\Pp^m$ of the image of $V_F$ under this
embedding.

According to Theorem \ref{th: sing locus V(F) without Pi_m}, the
singular locus  $\Sigma_F$ has dimension at most $m-k-1$. Actually,
the proof of Theorem \ref{th: sing locus V(F) without Pi_m} shows a
stronger assertion: the set of points $\bfs x\in\A^m$, for which
simultaneously $F(\bfs x)=0$ and $\nabla F(\bfs x)=\bfs 0$ hold, has
dimension at most $m-k-1\le m-2$. As a consequence, by, e.g.,
\cite[Theorem 18.15]{Eisenbud95}, the ideal generated by $F$ is
radical, and thus $F$ is square-free. In particular, $F$ is a
minimal defining polynomial for $V_F\subset\A^m$. It follows that
the homogenization $F^h$ of $F$ is square-free, and thus is a
minimal defining polynomial for $\mathrm{pcl}(V_F)\subset\Pp^m$.

We denote by $\mathrm{pcl}(V_F)^\infty$ the points of
$\mathrm{pcl}(V_F)$ at infinity. As $F^h=F^{(d)}+X_0^{e}(F^*)^h$ for
a suitable $e\ge 1$, then $F^{(d)}$ defines
$\mathrm{pcl}(V_F)^\infty$. In particular, as $F^{(d)}$ satisfies
assumptions $({\sf A_1})$ and $({\sf A_2})$, arguing as above we
deduce that $F^{(d)}$ is a minimal defining polynomial for
$\mathrm{pcl}(V_F)^\infty\subset\Pp^{m-1}$.

Finally, denote by $\Sigma_F^{\infty}\subset\mathbb{P}^{m}$ the
singular locus of $\mathrm{pcl}(V_F)$ at infinity. Taking into
account that $F^{(d)}$ satisfies assumptions $({\sf A_1})$ and
$({\sf A_2})$, we have the following immediate consequence of
Theorem \ref{th: sing locus V(F) without Pi_m}
\begin{corollary}\label{coro: singular locus at infinity without Pi_m}
With hypothesis as in Theorem \ref{th: sing locus V(F) without
Pi_m}, the singular locus $\Sigma_F^{\infty}$ of $\mathrm{pcl}(V_F)$
at infinity has dimension at most $m-k-2$.
\end{corollary}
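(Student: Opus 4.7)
The plan is to reduce the corollary to an application of Theorem \ref{th: sing locus V(F) without Pi_m} to the top-degree component $F^{(d)}$, and then pass from affine to projective dimension. The key point is that the singular locus at infinity of a hypersurface $\mathrm{pcl}(V_F) \subset \mathbb{P}^m$ cut out by $F^h = F^{(d)} + X_0^e(F^*)^h$ (with $e\geq 1$) is governed entirely by the homogeneous form $F^{(d)}$ defining $\mathrm{pcl}(V_F)^\infty \subset \mathbb{P}^{m-1}$.

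First I would check that $F^{(d)}$ itself falls under the hypotheses of Theorem \ref{th: sing locus V(F) without Pi_m}. Since $F$ is symmetric and $F^{(d)}$ is its top-degree homogeneous part, $F^{(d)}$ is symmetric too, and it can be written as $F^{(d)} = G^{\wt}(\Pi_1,\ldots,\Pi_{m-k+1})$ for the top-weighted component $G^{\wt}$ of $G$. Assumption $({\sf A_2})$ on $F$ is precisely the statement that $G^{\wt}$ satisfies $({\sf A_1})$, and $G^{\wt}$ is its own top-weighted component so it trivially satisfies $({\sf A_2})$. Therefore the theorem applies to $F^{(d)}$, yielding that the affine singular locus $\mathrm{Sing}(V(F^{(d)}))\subset \mathbb{A}^m$ has dimension at most $m-k-1$.

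Next I would identify $\Sigma_F^\infty$ as (a subset of) the projectivization of this affine singular locus. Writing $F^h = F^{(d)} + X_0^e(F^*)^h$ with $e\ge 1$, for any point $(0:x_1:\cdots:x_m)\in \Sigma_F^\infty$ the vanishing of $\partial F^h/\partial X_i$ at that point for $i\ge 1$ reduces, upon setting $X_0=0$, to $\partial F^{(d)}/\partial X_i(x_1,\ldots,x_m) = 0$; the equation $F^h = 0$ on the hyperplane at infinity similarly reduces to $F^{(d)}(x_1,\ldots,x_m) = 0$. Hence every point of $\Sigma_F^\infty$ corresponds to a point of the affine variety $\{F^{(d)} = 0,\ \nabla F^{(d)} = \bfs 0\} = \mathrm{Sing}(V(F^{(d)}))$.

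Finally, since $F^{(d)}$ is homogeneous of positive degree, the affine singular locus $\mathrm{Sing}(V(F^{(d)}))$ is a cone in $\mathbb{A}^m$, and its image in $\mathbb{P}^{m-1}$ has dimension exactly one less. Combining with the bound $\dim \mathrm{Sing}(V(F^{(d)})) \le m-k-1$ yields $\dim \Sigma_F^\infty \le m-k-2$, as claimed. There is no real obstacle here beyond being careful with the passage affine $\leadsto$ projective at infinity; the substantive geometric content is packaged entirely in Theorem \ref{th: sing locus V(F) without Pi_m}, and the corollary is its ``at infinity'' shadow.
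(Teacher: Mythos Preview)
Your proof is correct and follows essentially the same approach as the paper's: apply Theorem~\ref{th: sing locus V(F) without Pi_m} to the top-degree form $F^{(d)}$ (which inherits $({\sf A_1})$ from $({\sf A_2})$ and trivially satisfies $({\sf A_2})$), obtain $\dim\mathrm{Sing}\big(V(F^{(d)})\big)\le m-k-1$ in $\A^m$, and then drop one dimension when passing to $\Pp^{m-1}$ since this singular locus is a cone. You are slightly more explicit than the paper in spelling out the containment $\Sigma_F^\infty\subset\{F^{(d)}=0,\ \nabla F^{(d)}=\bfs 0\}$ via the decomposition $F^h=F^{(d)}+X_0^e(F^*)^h$, whereas the paper simply identifies $\Sigma_F^\infty$ with the singular locus of $V(F^{(d)})\subset\Pp^{m-1}$; your containment is all that is needed for the dimension bound.
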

\begin{proof}
Let $W:=V(F^{(d)})\subset\A^m$ be the affine cone defined by
$F^{(d)}$. By Theorem \ref{th: sing locus V(F) without Pi_m} we have
that the singular locus $\mathrm{Sing}(W)$ of $W$  has dimension at
most $m-k-1$. The corresponding projective variety is
$V(F^{(d)})=\mathrm{pcl}(V_F)\cap\{X_0=0\}\subset\Pp^{m-1}$, and its
singular locus, namely $\Sigma_F^{\infty}\subset\Pp^{m-1}$, has
dimension at most $m-k-2$.
\end{proof}

Now we are able to establish our main geometric characterization of
$\mathrm{pcl}(V_F)$ and $\mathrm{pcl}(V_F)^\infty$.
\begin{theorem}\label{th: pcl(V_F) is a normal hypersurface}
Suppose that $\mathrm{char}(\fq)$ does not divide
$m(m-1)\cdots(m-k)$. Let $F$ be as in \eqref{def: f_1}, satisfying
assumptions $({\sf A_1})$ and $({\sf A_2})$. Then the hypersurfaces
$\mathrm{pcl}(V_F)$ and $\mathrm{pcl}(V_F)^\infty$ are regular in
codimension $k-1$.
\end{theorem}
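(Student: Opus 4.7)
The plan is to assemble the two ingredients already at hand, namely the bound on the dimension of the affine singular locus $\Sigma_F$ from Theorem \ref{th: sing locus V(F) without Pi_m} and the bound on the singular locus at infinity $\Sigma_F^\infty$ from Corollary \ref{coro: singular locus at infinity without Pi_m}, and then read off the codimension in each of $\mathrm{pcl}(V_F)$ and $\mathrm{pcl}(V_F)^\infty$.

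First, I would observe that the singular locus of $\mathrm{pcl}(V_F)\subset\Pp^m$ decomposes as the union of its affine part, which is $\Sigma_F\subset\A^m$, and the set of singular points at infinity, which is $\Sigma_F^\infty\subset\Pp^{m-1}$. By Theorem \ref{th: sing locus V(F) without Pi_m} we have $\dim\Sigma_F\le m-k-1$, and by Corollary \ref{coro: singular locus at infinity without Pi_m} we have $\dim\Sigma_F^\infty\le m-k-2$. Hence the singular locus $\mathrm{Sing}(\mathrm{pcl}(V_F))$ has dimension at most $m-k-1$. Since $F$ is square-free (as noted in the discussion preceding the statement), $F^h$ is a minimal defining polynomial for $\mathrm{pcl}(V_F)$, so $\mathrm{pcl}(V_F)$ is a hypersurface of $\Pp^m$ of pure dimension $m-1$. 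This yields
\[
\dim\mathrm{pcl}(V_F)-\dim\mathrm{Sing}(\mathrm{pcl}(V_F))\ge (m-1)-(m-k-1)=k,
\]
which is precisely the condition that $\mathrm{pcl}(V_F)$ be regular in codimension $k-1$.

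For $\mathrm{pcl}(V_F)^\infty$, I would argue analogously. Since $F^{(d)}$ also satisfies assumptions $({\sf A}_1)$ and $({\sf A}_2)$, the proof of Theorem \ref{th: sing locus V(F) without Pi_m} (applied to $F^{(d)}$) shows that $F^{(d)}$ is square-free and a minimal defining polynomial of $\mathrm{pcl}(V_F)^\infty\subset\Pp^{m-1}$. Hence $\mathrm{pcl}(V_F)^\infty$ is a hypersurface of pure dimension $m-2$, and its singular locus is exactly $\Sigma_F^\infty$, which has dimension at most $m-k-2$ by Corollary \ref{coro: singular locus at infinity without Pi_m}. Therefore
\[
\dim\mathrm{pcl}(V_F)^\infty-\dim\Sigma_F^\infty\ge (m-2)-(m-k-2)=k,
\]
so $\mathrm{pcl}(V_F)^\infty$ is also regular in codimension $k-1$.

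In this plan there is essentially no obstacle: once the statements of Theorem \ref{th: sing locus V(F) without Pi_m} and Corollary \ref{coro: singular locus at infinity without Pi_m} are available, the result is a bookkeeping exercise in dimension counting. The only point requiring a word of care is to justify that the two bounds on $\dim\Sigma_F$ and $\dim\Sigma_F^\infty$ really control the full singular locus of $\mathrm{pcl}(V_F)$, but this follows from the standard fact that a point of $\mathrm{pcl}(V_F)$ is singular if and only if it is singular in an affine chart, which is exactly what Theorem \ref{th: sing locus V(F) without Pi_m} and Corollary \ref{coro: singular locus at infinity without Pi_m} address on the two complementary charts $\{X_0\ne 0\}$ and $\{X_0=0\}$.
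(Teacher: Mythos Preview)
Your proposal is correct and follows essentially the same approach as the paper: combine the bound $\dim\Sigma_F\le m-k-1$ from Theorem \ref{th: sing locus V(F) without Pi_m} with the bound $\dim\Sigma_F^\infty\le m-k-2$ from Corollary \ref{coro: singular locus at infinity without Pi_m}, and read off the codimension of the singular locus in each of the two hypersurfaces. The paper's proof differs only in the order of presentation (it treats $\mathrm{pcl}(V_F)^\infty$ first) and is somewhat terser, but the argument is the same dimension count you give.
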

\begin{proof}
Corollary \ref{coro: singular locus at infinity without Pi_m} shows
that the singular locus of $\mathrm{pcl}(V_F)$ at infinity has
dimension at most $m-k-2$. Therefore,
$\mathrm{pcl}(V_F)^\infty=\mathrm{pcl}(V_F)\cap\{X_0=0\}$ is a
hypersurface of $\{X_0=0\}\subset\Pp^m$ regular in codimension
$k-1$. On the other hand, according to Theorem \ref{th: sing locus
V(F) without Pi_m}, the singular locus of
$\mathrm{pcl}(V_F)\cap\{X_0\not=0\}$ has dimension at most $m-k-1$.
This and Corollary \ref{coro: singular locus at infinity without
Pi_m} imply that the singular locus of $\mathrm{pcl}(V_F)$ has
dimension at most $m-k-1$, and thus that $\mathrm{pcl}(V_F)$ is
regular in codimension $k-1$.
\end{proof}
%
%------------------------------------------------------------------
%------------------------------------------------------------------
%------------------------------------------------------------------
%------------------------------------------------------------------
%
\subsection{Estimates on $|V_F(\fq)|$}
Now we estimate the number of $\fq$-rational zeros of any $F$ as
above. Let $F$ be as in \eqref{def: f_1}, satisfying assumptions
$({\sf A_1})$ and $({\sf A_2})$. We have
\begin{align*}
\big||V_F(\fq)|-q^{m-1}\big|=&
\big||\mathrm{pcl}(V_F)(\fq)|-|\mathrm{pcl}(V_F(\fq))^{\infty}|-p_{m-1}+p_{m-2}\big|\\
%& \nonumber\\
%\leq & (m-1)(m-2)q^{m-5/2}(1+q)+14(m(m-1))^2q^{m-3
%}(1+q) \nonumber\\
&\le
\big||\mathrm{pcl}(V_F)(\fq)|-p_{m-1}\big|+\big||\mathrm{pcl}(V_F(\fq))^{\infty}|-p_{m-2}\big|.
\end{align*}

According to Theorem \ref{th: pcl(V_F) is a normal hypersurface},
the hypersurfaces $\mathrm{pcl}(V_F)\subset\Pp^{m}$ and
$\mathrm{pcl}(V_F)^\infty\subset\Pp^{m-1}$ are regular in
codimension $k-1$ and defined over $\fq$. As a consequence,
\eqref{eq: estimate normal var CaMaPr} and \eqref{eq: estimate var
regular codim 2 CaMaPr} imply
\begin{align*}
\big||\mathrm{pcl}(V_F)(\fq)|-p_{m-1}\big| &\leq
\left\{\begin{array}{c}(d-1)(d-2)q^{m-\frac{3}{2}}+14(d-1)^2d^2q^{m-2}
\textrm{ for }k=2,\\[1ex]
14(d-1)^3d^2q^{m-2} \textrm{ for }k=3,
\end{array}\right.\\
\big||\mathrm{pcl}(V_F)^\infty(\fq)|-p_{m-2}\big|
&\le \left\{\begin{array}{c} (d-1)(d-2)q^{m-\frac{5}{2}}+14(d-1)^2d^2q^{m-3}\textrm{ for }k=2,\\[1ex]
14(d-1)^3d^2q^{m-3}\textrm{ for }k=3.
\end{array}\right.
\end{align*}
It follows that
\begin{align*}
\big||V_F(\fq)|-q^{m-1}\big| &\le \left\{\begin{array}{c}
q^{m-\frac{3}{2}}(1+q^{-1})\big((d-1)(d-2)+14(d-1)^2d^2q^{-\frac{1}{2}}\big)
\textrm{ for }k=2,\\[1ex]
q^{m-2}(1+q^{-1})14(d-1)^3d^2\textrm{ for }k=3.
\end{array}\right.
\end{align*}

On the other hand, for general $k$ we show that the general bound
\eqref{eq: estimate var regular codim k-1 GhLa} can be improved. In
\cite[Theorem 6.1]{GhLa02a} the authors prove that, for an
hypersurface $V\subset\Pp^l$ defined over $\fq$, of degree $d\ge 2$
and regular in codimension $k-1$, with $2\le k< l$, we have
$$
\big||V(\fq)|-p_{l-1}\big|\leq
b_{k-1,d}\,q^{\frac{2l-k-1}{2}}+C_{l-1-k,l-1}(V)q^{\frac{2l-k-2}{2}},
$$
where $b_{k-1,d}$ is the $(k-1)$th primitive Betti number of any
nonsingular hypersurface in $\Pp^{l-s-1}$ of degree $d$, which is
upper bounded by
$$b_{k-1,d} \le
\frac{d-1}{d}\big((d-1)^k-(-1)^k\big)\le (d-1)^k,$$
while  $C_{l-k-1,l-1}(V)$ is the sum
  $$
    C_{l-k-1,l-1}(V):=\sum_{i=l-1}^{2l-k-2}b_{i,\ell}(V)+\varepsilon_i,
  $$
where $b_{i,\ell}(V)$ denotes the $i$th $\ell$--adic Betti number of
$V$ for a prime $\ell$ different from $p:=\mathrm{char}(\fq)$ and
$\varepsilon_i:=1$ for even $i$ and $\varepsilon_i:=0$ for odd $i$.
According to \cite[Lemma 5.1]{CaMaPr12}, we have the bound
$$C_{l-k-1,l-1}(V) \le  6(d+2)^{l+1}.$$
As a consequence, we have the bound
$$\big||V(\fq)|-p_{l-1}\big|\leq (d-1)^k\,q^{\frac{2l-k-1}{2}}+
6(d+2)^{l+1}q^{\frac{2l-k-2}{2}}.$$
Now we apply this bound to estimate $|\mathrm{pcl}(V_F)(\fq)|$ and
$|\mathrm{pcl}(V_F(\fq))^{\infty}|$. We have
\begin{align*}
\big||V_F(\fq)|-q^{m-1}\big| \le&\,
\big||\mathrm{pcl}(V_F)(\fq)|-p_{m-1}\big|+\big||\mathrm{pcl}(V_F(\fq))^{\infty}|-p_{m-2}\big|\\
\le& \,(d-1)^k\,q^{\frac{2m-k-1}{2}}+
6(d+2)^{m+1}q^{\frac{2m-k-2}{2}}\\& +(d-1)^k\,q^{\frac{2m-k-3}{2}}+
6(d+2)^{m}q^{\frac{2m-k-4}{2}}\\
\le&\, q^{\frac{2m-k-1}{2}}(1+q^{-1})\big((d-1)^k+
6(d+2)^{m+1}q^{-\frac{1}{2}}\big).
\end{align*}
Summarizing, we have the following result.
\begin{theorem}\label{th: estimate |V_F| without Pi_m}
Suppose that $\mathrm{char}(\fq)$ does not divide
$m(m-1)\cdots(m-k)$. Let $F$ be as in \eqref{def: f_1}, satisfying
assumptions $({\sf A_1})$ and $({\sf A_2})$, and let
$V_F:=V(F)\subset\A^m$. Let $d:=\deg F$. We have
$$
\big||V_F(\fq)|-q^{m-1}\big|\leq
\left\{\begin{array}{c}
q^{m-\frac{3}{2}}(1+q^{-1})\big((d-1)(d-2)+14(d-1)^2d^2q^{-\frac{1}{2}}\big)
\textrm{ for }k=2,\\[1ex]
q^{m-2}(1+q^{-1})14(d-1)^3d^2\textrm{ for }k=3,
\\[1ex]
q^{m-\frac{k+1}{2}}(1+q^{-1})\big((d-1)^k+
6(d+2)^{m+1}q^{-\frac{1}{2}}\big)\textrm{ for }2\le k< m-1.
\end{array}\right.
$$
\end{theorem}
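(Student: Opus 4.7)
The plan is to proceed exactly along the lines sketched in the paragraphs just before the theorem statement, namely to reduce the estimation of $|V_F(\fq)|$ to estimating the number of $\fq$--rational points of the projective closure $\mathrm{pcl}(V_F)\subset\Pp^m$ and of its part at infinity $\mathrm{pcl}(V_F)^\infty\subset\Pp^{m-1}$, and then apply known estimates for projective hypersurfaces that are regular in a prescribed codimension.

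First I would write the identity
\begin{equation*}
|V_F(\fq)|=|\mathrm{pcl}(V_F)(\fq)|-|\mathrm{pcl}(V_F)^\infty(\fq)|,
\end{equation*}
combine it with $q^{m-1}=p_{m-1}-p_{m-2}$, and use the triangle inequality to obtain
\begin{equation*}
\big||V_F(\fq)|-q^{m-1}\big|\le \big||\mathrm{pcl}(V_F)(\fq)|-p_{m-1}\big|+\big||\mathrm{pcl}(V_F)^\infty(\fq)|-p_{m-2}\big|.
\end{equation*}
Theorem \ref{th: pcl(V_F) is a normal hypersurface} guarantees that both $\mathrm{pcl}(V_F)$ and $\mathrm{pcl}(V_F)^\infty$ are hypersurfaces defined over $\fq$, of degree $d$, regular in codimension $k-1$; this is what allows us to invoke the singular--hypersurface estimates.

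For $k=2$ and $k=3$ I would apply the complete--intersection estimates \eqref{eq: estimate normal var CaMaPr} and \eqref{eq: estimate var regular codim 2 CaMaPr} (with multidegree $(d)$, so $\delta=d$ and $D=d-1$) to each of the two terms above, and then factor out the dominant power of $q$ to obtain a factor $(1+q^{-1})$ from adding the projective--closure bound and the infinity bound. This is purely routine arithmetic and yields the first two estimates.

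For general $k$ with $2\le k<m-1$ the straight use of \eqref{eq: estimate var regular codim k-1 GhLa} is wasteful, since the hypersurface case admits the sharper bound of \cite[Theorem 6.1]{GhLa02a}: for a hypersurface $V\subset\Pp^l$ of degree $d$ defined over $\fq$ and regular in codimension $k-1$,
\begin{equation*}
\big||V(\fq)|-p_{l-1}\big|\le b_{k-1,d}\,q^{(2l-k-1)/2}+C_{l-1-k,l-1}(V)\,q^{(2l-k-2)/2},
\end{equation*}
where the primitive Betti number satisfies $b_{k-1,d}\le(d-1)^k$ and, by \cite[Lemma 5.1]{CaMaPr12}, $C_{l-1-k,l-1}(V)\le 6(d+2)^{l+1}$. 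I would apply this with $l=m$ for $\mathrm{pcl}(V_F)$ and with $l=m-1$ for $\mathrm{pcl}(V_F)^\infty$, add the two resulting estimates, and factor out $q^{(2m-k-1)/2}$ to recover the $(1+q^{-1})$ shape announced in the statement.

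The main (and essentially only) obstacle is the bookkeeping of the two different ambient dimensions ($l=m$ and $l=m-1$) and making sure that the bound for the infinity part, which is smaller by a factor $q^{-1}$, really combines with the bound for the projective closure to give the clean factor $(1+q^{-1})$; all remaining ingredients (regularity in codimension $k-1$, the hypersurface estimates, and the explicit Betti--number bound) are already established or directly quoted from the literature, so the remainder of the proof is routine substitution and simplification.
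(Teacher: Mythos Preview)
Your proposal is correct and follows essentially the same approach as the paper: split via the projective closure and its part at infinity, invoke Theorem~\ref{th: pcl(V_F) is a normal hypersurface} for regularity in codimension $k-1$, then apply \eqref{eq: estimate normal var CaMaPr}--\eqref{eq: estimate var regular codim 2 CaMaPr} for $k=2,3$ and the sharper hypersurface bound from \cite[Theorem~6.1]{GhLa02a} together with \cite[Lemma~5.1]{CaMaPr12} for general $k$. The paper's proof is in fact precisely the computation displayed in the paragraphs immediately preceding the theorem statement, and your outline matches it step for step, including the $(1+q^{-1})$ factoring.
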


Finally, we consider the existence of points in $V_F(\fq)$, with $F$
as in \eqref{def: f_1}, satisfying assumptions $({\sf A_1})$ and
$({\sf A_2})$ and not depending on $\Pi_m$. If $q\ge 36(d-1)^2d^2$,
then we can slightly simplify the estimate for $k=2$ to
$$\big||V(\fq)|-q^{m-1}\big|\leq
q^{m-\frac{3}{2}}\mbox{$\frac{3}{2}$}\big((d-1)d+14(d-1)^2d^2q^{-\frac{1}{2}}\big)\leq
q^{m-\frac{3}{2}}\mbox{$\frac{3}{2}$}\big(1+\mbox{$\frac{14}{6}$}\big)(d-1)d.$$
Now we can proceed to establish a condition which implies that
$V(\fq)$ is nonempty. For $q\ge 36(d-1)^2d^2$, we have that
$|V(\fq)|>0$ if
$$q^{m-1}-\mbox{$\frac{3}{2}$}\big(1+\mbox{$\frac{14}{6}$}\big)(d-1)d\, q^{m-\frac{3}{2}}> 0.$$
As this condition holds, we have the following result.
\begin{corollary}
Suppose that $\mathrm{char}(\fq)$ does not divide $m(m-1)(m-2)$. Let
$F$ be as in \eqref{def: f_1}, satisfying assumptions $({\sf A_1})$
and $({\sf A_2})$ and not depending on $\Pi_m$. If $q\ge
36(d-1)^2d^2$, then $|V(\fq)|\ge 1$.
\end{corollary}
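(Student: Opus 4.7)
The plan is to apply Theorem \ref{th: estimate |V_F| without Pi_m} in the case $k=2$, since the hypothesis ``$F$ does not depend on $\Pi_m$'' is exactly the instance $F=G(\Pi_1,\ldots,\Pi_{m-1})$ (i.e., $k=2$ in the form $G(\Pi_1,\ldots,\Pi_{m-k+1})$), and the hypothesis $\mathrm{char}(\fq)\nmid m(m-1)(m-2)$ matches the non-divisibility condition $\mathrm{char}(\fq)\nmid m(m-1)\cdots(m-k)$ required there. This gives the estimate
\[
\big||V_F(\fq)|-q^{m-1}\big|\le q^{m-\frac{3}{2}}(1+q^{-1})\big((d-1)(d-2)+14(d-1)^2d^2q^{-\frac{1}{2}}\big).
\]

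Next I would simplify this bound under the hypothesis $q\ge 36(d-1)^2d^2$. First, using $(d-1)(d-2)\le (d-1)d$ and $1+q^{-1}\le \frac{3}{2}$ (which holds for all $q\ge 2$), and then using $q^{-1/2}\le\frac{1}{6(d-1)d}$ to bound $14(d-1)^2d^2q^{-1/2}\le \frac{14}{6}(d-1)d$, one obtains the chain of inequalities already displayed in the paragraph preceding the corollary, namely
\[
\big||V_F(\fq)|-q^{m-1}\big|\le q^{m-\frac{3}{2}}\mbox{$\frac{3}{2}$}\big(1+\mbox{$\frac{14}{6}$}\big)(d-1)d.
\]

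Finally, to conclude $|V_F(\fq)|\ge 1$, it is enough to show that $q^{m-1}$ strictly dominates this error term, i.e.
\[
q^{m-1}>\mbox{$\frac{3}{2}$}\big(1+\mbox{$\frac{14}{6}$}\big)(d-1)d\, q^{m-\frac{3}{2}},
\]
which reduces to $q^{1/2}>5(d-1)d$. Since $q\ge 36(d-1)^2d^2$ gives $q^{1/2}\ge 6(d-1)d>5(d-1)d$, the required inequality holds. This yields $|V_F(\fq)|>q^{m-1}-q^{m-1}=0$, hence $|V_F(\fq)|\ge 1$.

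There is no substantive obstacle here; this is essentially a repackaging of the numerical inequality already established in the paragraph before the statement. The only thing to be careful about is matching the value of $k$ in the invocation of Theorem \ref{th: estimate |V_F| without Pi_m} (namely $k=2$) with the hypothesis on $\mathrm{char}(\fq)$, and verifying that the constants in the $k=2$ estimate combine cleanly under $q\ge 36(d-1)^2d^2$ to give a bound strictly below $q^{m-1}$.
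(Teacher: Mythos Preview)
Your proposal is correct and follows essentially the same approach as the paper: apply Theorem \ref{th: estimate |V_F| without Pi_m} with $k=2$, simplify the $k=2$ bound using $(d-1)(d-2)\le (d-1)d$, $1+q^{-1}\le \tfrac{3}{2}$, and $q^{-1/2}\le \tfrac{1}{6(d-1)d}$, and then check that $q^{1/2}\ge 6(d-1)d>5(d-1)d=\tfrac{3}{2}(1+\tfrac{14}{6})(d-1)d$ forces $|V_F(\fq)|>0$. This is exactly the computation the paper carries out in the paragraph immediately preceding the corollary.
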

%
%----------------------------------------------------------------------
%----------------------------------------------------------------------
%----------------------------------------------------------------------
%----------------------------------------------------------------------
%----------------------------------------------------------------------
%----------------------------------------------------------------------
%----------------------------------------------------------------------
%----------------------------------------------------------------------
%
\section{The distribution of factorization patterns}
\label{section: distribution of fact patterns}
This section is devoted to the first application of the framework of
the previous sections, namely we obtain an estimate on the number
$|\mathcal{A}_{\bfs\lambda}|$ of elements on a family $\mathcal{A}$
of monic polynomials of $\fq[T]$ of degree $n$ having factorization
pattern $\bfs\lambda:=1^{\lambda_1}2^{\lambda_2}\cdots
n^{\lambda_n}$. Our estimate asserts that
$|\mathcal{A}_{\bfs\lambda}|=
\mathcal{T}(\bfs\lambda)\,q^{n-m}+\mathcal{O}(q^{n-m-1})$, where
$\mathcal{T}(\bfs\lambda)$ is the proportion of elements of the
symmetric group of $n$ elements with cycle pattern $\bfs\lambda$ and
$m$ is the codimension of $\mathcal{A}$. To this end, we reduce the
question to estimate the number of $\fq$--rational points with
pairwise--distinct coordinates of a certain family of complete
intersections defined over $\fq$. These complete intersections are
defined by symmetric polynomials which satisfy the hypotheses of
Theorem \ref{th: estimate |V| without Pi_(m-1),Pi_m} and Corollary
\ref{coro: nb point V_r distinct coordinates}.

Let $T$ be an indeterminate over $\cfq$. For a positive integer $n$,
let $\mathcal{P}:={\mathcal P}_n$ be the set of all monic
polynomials in $\fq[T]$ of degree $n$. Let
$\lambda_1,\dots,\lambda_n$ be nonnegative integers such that
$$\lambda_1+2\lambda_2+\cdots+n\lambda_n=n.$$
We denote by ${\mathcal P}_{\bfs \lambda}$ the set of $f\in
\mathcal{P}$ with factorization pattern $\bfs
\lambda:=1^{\lambda_1}2^{\lambda_2}\cdots n^{\lambda_n}$, namely the
elements $f\in \mathcal{P}$ having exactly $\lambda_i$ monic
irreducible factors over $\fq$ of degree $i$ (counted with
multiplicity) for $1\le i\le n$. Further, for any subset
$\mathcal{S}\subset\mathcal{P}$ we shall denote
$\mathcal{S}_{\bfs{\lambda}}:=\mathcal{S}\cap\mathcal{P}_{\bfs
\lambda}$.

In \cite{Cohen70},  S. Cohen showed that the proportion of elements
of $\mathcal{P}_{\bfs\lambda}$ in $\mathcal{P}$ is roughly the
proportion $\mathcal{T}({\bfs{\lambda}})$ of permutations with cycle
pattern $\bfs \lambda$ in the $n$th symmetric group $\mathbb{S}_n$,
where a permutation of $\mathbb{S}_n$ is said to have cycle pattern
$\bfs\lambda$ if it has exactly $\lambda_i$ cycles of length $i$ for
$1\le i\le n$. More precisely, Cohen proved that
$$%\begin{equation}\label{eq: intro: Cohen}
|\mathcal{P}_{\bfs\lambda}|=\mathcal{T}(\bfs\lambda)\,q^n+
\mathcal{O}(q^{n-1}),
$$%\end{equation}
where the constant underlying the $\mathcal{O}$--notation depends
only on $\bfs \lambda$. Observe that the number of permutations in
$\mathbb{S}_n$ with cycle pattern $\bfs\lambda$ is $n!/w({\bfs
\lambda})$, where
$$w({\bfs \lambda}):=1^{\lambda_1}2^{\lambda_2}\dots n^{\lambda_n}
\lambda_1!\lambda_2!\dots\lambda_n!.$$
In particular, $\mathcal{T}({\bfs{\lambda}})={1}/{w({\bfs
\lambda})}$.

Further, in \cite{Cohen72} Cohen called
$\mathcal{S}\subset\mathcal{P}$ {\em uniformly distributed} if the
proportion $|\mathcal{S}_{\bfs\lambda}|/|\mathcal{S}|$ is roughly
$\mathcal{T}(\bfs\lambda)$ for every factorization pattern
$\bfs\lambda$. The main result of this paper (\cite[Theorem
3]{Cohen72}) provides a criterion for a {\em linear} family
$\mathcal{S}$ of polynomials of $\mathcal{P}$ to be uniformly
distributed in the sense above. For any such linear family
$\mathcal{S}$ of codimension $m \le n-2$, assuming that the
characteristic $p$ of $\fq$ is greater than $n$, it is shown that
\begin{equation}\label{eq: intro: Cohen72}
|\mathcal{S}_{\bfs\lambda}|=\mathcal{T}(\bfs\lambda)\,q^{n-m}+
\mathcal{O}(q^{n-m-\frac{1}{2}}).
\end{equation}
A difficulty with \cite[Theorem 3]{Cohen72} is that the hypotheses
for a linear family of $\mathcal{P}$ to be uniformly distributed
seem complicated and not easy to verify. In this direction,
\cite{BaBaRo15}, \cite{Ha16} and \cite{CeMaPe17} provide explicit
estimates on the number of elements with factorization pattern
$\bfs\lambda$ on certain linear families of $\fq[T]$, such as the
set of polynomials with some prescribed coefficients.

In \cite[Problem 2.2]{GaHoPa99} the authors ask for estimates on the
number of polynomials of a given degree with a given factorization
pattern lying in {\em nonlinear} families of polynomials with
coefficients parameterized by an affine variety defined over $\fq$.
\cite{MaPePr20} addresses this question, providing a general
criterion for a nonlinear family $\mathcal{A}\subset\fq[T]$ to be
uniform distributed in the sense of Cohen and explicit estimates on
the number of elements of $\mathcal{A}$ with a given factorization
pattern.

More precisely, let $m$ and $n$ be positive integers with $m<n$ and
$A_{n-1}\klk A_{0}$ indeterminates over $\cfq$. Denote
$\fq[\bfs{A}_1]:= \fq[A_{n-1},\ldots,A_1]$. Let $G_1\klk G_m\in
\fq[\bfs{A}_1]$ and let $W:=\{G_1=0,\ldots,G_m=0\}$ be the set of
common zeros in $\A^n:=\cfq{\!}^n$ of $G_1,\ldots,G_m$. Denoting by
$\fq[T]_n$ the set of monic polynomials of degree $n$ with
coefficients in $\fq$, we consider the following family of
polynomials:
\begin{equation}\label{eq: def non-linear family A}
\mathcal{A}:=\{T^n+a_{n-1}T^{n-1}+\cdots+a_0\in \fq[T]_n:
G_i(a_{n-1},\ldots,a_1)=0\,\,(1\leq i \leq m)\}.
\end{equation}

Consider the weight $\wt:\fq[\bfs{A}_1]\to\N_0$ defined by setting
$\wt(A_j):=n-j$ for $1\leq j \leq n-1$ and denote by $G_1^{\wt}\klk
G_m^{\wt}$ the components of highest weight of $G_1\klk G_m$. Let
$(\partial\bfs{G}/\partial \bfs{A}_1)$ be the Jacobian matrix of
$G_1\klk G_m$ with respect to $\bfs{A}_1$. In \cite{MaPePr20} it is
assumed that $G_1 \klk G_m$ satisfy the following conditions:
\begin{itemize}
\item[$({\sf H}_1)$] $G_1,\ldots,G_m$ form a regular sequence of $\fq[\bfs{A}_1]$.
\item[$({\sf H}_2)$]  $(\partial\bfs{G}/
\partial \bfs{A}_1)$ has full rank on every point of $W$.
\item[$({\sf H}_3)$]  $G_1^{\wt}\klk G_m^{\wt}$ satisfy
$({\sf H}_1)$ and $({\sf H}_2)$.
\end{itemize}

In what follows we identify the set $\cfq[T]_n$ of monic polynomials
of $\cfq[T]$ of degree $n$ with $\A^n$, by mapping each $f_{\bfs
a_0}:= T^n + a_{n-1}T^{n-1}+\cdots+a_0 \in \cfq[T]_n$ to $\bfs
a_0:=(a_{n-1},\ldots,a_0)\in\A^n$. For $\mathcal{B}\subset
\cfq[T]_n$, the set of elements of $\mathcal{B}$ which are not
square--free is called the discriminant locus
$\mathcal{D}(\mathcal{B})$ of $\mathcal{B}$ (see \cite{FrSm84} and
\cite{MaPePr14} for the study of discriminant loci). For $f_{\bfs
a_0}\in \mathcal{B}$, let $\mathrm{Disc}(f_{\bfs
a_0}):=\mathrm{Res}(f_{\bfs a_0},f'_{\bfs a_0})$ denote the
discriminant of $f_{\bfs a_0}$. Since $f_{\bfs a_0}$ has degree $n$,
by basic properties of resultants we have
$$\mathrm{Disc}(f_{\bfs a_0})= \mathrm{Disc}(F(\bfs A_0, T))|_{\bfs A_0=\bfs a_0} :=
\mathrm{Res}(F(\bfs A_0, T), F'(\bfs A_0, T), T)|_{\bfs A_0=\bfs
a_0},
$$
where the expression $\mathrm{Res}$ in the right--hand side denotes
resultant with respect to $T$. It follows that
$\mathcal{D}(\mathcal{B}):=\{\bfs a_0 \in \mathcal{B} :
\mathrm{Disc}(F(\bfs A_0, T))|_{\bfs A_0=\bfs a_0}= 0\}$. Further,
the first subdiscriminant locus $\mathcal{S}_1(\mathcal{B})$ of
$\mathcal{B}\subset\cfq[T]_n$ is the set of $\bfs a_0\in
\mathcal{D}(\mathcal{B})$ for which the first subdiscriminant
$\mathrm{sDisc}_1(f_{\bfs a_0}):=\mathrm{sRes}_1(f_{\bfs
a_0},f'_{\bfs a_0})$ vanishes, where $\mathrm{sRes}_1(f_{\bfs
a_0},f'_{\bfs a_0})$ denotes the first subresultant of $f_{\bfs
a_0}$ and $f'_{\bfs a_0}$. Since $f_{\bfs a_0}$ has degree $n$,
basic properties of subresultants imply
$$\mathrm{sDisc}_1(f_{\bfs a_0}
)= \mathrm{sDisc}_1(F(\bfs A_0, T))|_{\bfs A_0=\bfs a_0} :=
\mathrm{sRes}_1(F(\bfs A_0, T),F'(\bfs A_0, T), T))|_{\bfs A_0=\bfs
a_0},$$
where $\mathrm{sRes}$ in the right--hand side denotes first
subresultant with respect to $T$. We have
$\mathcal{S}_1(\mathcal{B}):=\{\bfs a_0\in\mathcal{D}(\mathcal{B}):
\mathrm{sDisc}_1(F(\bfs A_0, T))|_{\bfs A_0=\bfs a_0}=0\}$. The next
three conditions of \cite{MaPePr20} require that the discriminant
and the first subdiscriminant locus intersect well $W$:
\begin{enumerate}
\item[$({\sf H}_4)$] $\mathcal{D}(W)$ has codimension at least one in $W$.
\item [$({\sf H}_5)$] $ (A_0\cdot \mathcal{S}_1)(W):=\{\bfs a_0\in W: a_0=0\}\cup \mathcal{S}_1(W)$  has codimension at least one in
$\mathcal{D}(W)$.%, where $(A_0 \cdot \mathcal{S}_1)(W):=\{\bfs a_0\in
%\mathcal{D}(W): a_0\cdot \mathrm{Subdisc}(f_{\bfs a_0})=0\}$.
\item[$({\sf H}_6)$] $\mathcal{D}(V(G_1^{\wt}\klk G_m^{\wt}))$ has codimension at least
one in $V(G_1^{\wt}\klk G_m^{\wt})\subset \cfq{\!}^r$.
\end{enumerate}

The main result of \cite{MaPePr20} asserts that any family
$\mathcal{A}$ satisfying hypotheses $({\sf H}_1)$--$({\sf H}_6)$ is
uniformly distributed in the sense of Cohen, and provides the
following explicit estimates on the number
$|\mathcal{A}_{\bfs\lambda}|$ of elements of $\mathcal{A}$ with
factorization pattern $\bfs\lambda$ for $m<n$:
\begin{equation}\label{eq: estimate fact patterns intro}
\big||\mathcal{A}_{\bfs\lambda}|
-\mathcal{T}(\bfs\lambda)\,q^{n-m}\big|\le
q^{n-m-1}\big(\mathcal{T}(\bfs\lambda) (D\delta\, q^{\frac{1}{2}}+14
D^2 \delta^2+n^2\delta)+n^2\delta\big),
\end{equation}
where $\delta:=\prod_{i=1}^m \wt(G_i)$ and
$D:=\sum_{i=1}^m(\wt(G_i)-1)$.

Applying Theorem \ref{th: estimate |V| without Pi_(m-1),Pi_m} we
shall be able to obtain a significantly simplified criterion for a
nonlinear family as in \eqref{eq: def non-linear family A} to
satisfy \eqref{eq: estimate fact patterns intro}.
%
%----------------------------------------------------------------------
%----------------------------------------------------------------------
%
\subsection{Factorization patterns and roots}
Let $\mathcal{A}\subset\mathcal{P}:=\mathcal{P}_n$ be the family of
\eqref{eq: def non-linear family A} and
$\bfs\lambda:=1^{\lambda_1}\cdots n^{\lambda_n}$ a factorization
pattern. Following the approach of \cite{CeMaPe17}, we shall show
that the condition that an element of $\mathcal{A}$ has
factorization pattern $\boldsymbol{\lambda}$ can be expressed in
terms of certain elementary symmetric polynomials.

Let $f$ be an element of $\mathcal{P}$ and $g\in \fq[T]$ a monic
irreducible factor of $f$ of degree $i$. Then $g$ is the minimal
polynomial of a root $\alpha$ of $f$ with $\fq(\alpha)=\fqi$. Denote
by $\mathbb G_i$ the Galois group $\mbox{Gal}(\fqi,\fq)$ of $\fqi$
over $\fq$. We may express $g$ in the following way:
$$g=\prod_{\sigma\in\mathbb G_i}(T-\sigma(\alpha)).$$
Hence, each irreducible factor $g$ of $f$ is uniquely determined by
a root $\alpha$ of $f$ (and its orbit under the action of the Galois
group of $\cfq$ over $\fq$), and this root belongs to a field
extension of $\fq$ of degree $\deg g$. Now, for
$f\in\mathcal{P}_{\bfs \lambda}$, there are $\lambda_1$ roots of $f$
in $\fq$, say $\alpha_1,\dots,\alpha_{\lambda_1}$ (counted with
multiplicity), which are associated with the irreducible factors of
$f$ in $\fq[T]$ of degree 1; we may choose $\lambda_2$ roots of $f$
in $\fqtwo\setminus\fq$ (counted with multiplicity), say
$\alpha_{\lambda_1+1},\dots, \alpha_{\lambda_1+\lambda_2}$, which
are associated with the $\lambda_2$ irreducible factors of $f$ of
degree 2, and so on. From now on we shall assume that a choice of
$\lambda_1\plp\lambda_n$ roots $\alpha_1\klk\alpha_{\lambda_1
\plp\lambda_n}$ of $f$ in $\cfq$ is made in such a way that each
monic irreducible factor of $f$ in $\fq[T]$ is associated with one
and only one of these roots.

Our aim is to express the factorization of $f$ into irreducible
factors in $\fq[T]$ in terms of the coordinates of the chosen
$\lambda_1\plp \lambda_n$ roots of $f$ with respect to certain bases
of the corresponding extensions $\fq\hookrightarrow\fqi$ as
$\fq$--vector spaces. To this end, we express the root associated
with each irreducible factor of $f$ of degree $i$ in a normal basis
$\Theta_i$ of the field extension $\fq\hookrightarrow \fqi$.

Let $\theta_i\in \fqi$ be a normal element and $\Theta_i$ the normal
basis of the extension $\fq\hookrightarrow\fqi$ generated by
$\theta_i$, i.e.,
$$\Theta_i=\left \{\theta_i,\cdots, \theta_i^{q^{i-1}}\right\}.$$
Observe that the Galois group $\mathbb G_i$ is cyclic and the
Frobenius map $\sigma_i:\fqi\to\fqi$, $\sigma_i(x):=x^q$ is a
generator of $\mathbb{G}_i$. Thus, the coordinates in the basis
$\Theta_i$ of all the elements in the orbit of a root
$\alpha_k\in\fqi$ of an irreducible factor of $f$ of degree $i$ are
the cyclic permutations of the coordinates of $\alpha_k$ in the
basis $\Theta_i$.

%The coordinate vector  of each element $\alpha $ of $\fqi$ in   base
%$\Theta_i$ is a vector  in $\fq^i$.
The vector that gathers the coordinates of all the roots
$\alpha_1\klk\alpha_{\lambda_1+\dots+\lambda_n}$ we chose to
represent the irreducible factors of $f$ in the normal bases
$\Theta_1\klk \Theta_n$ is an element of $\fq^n$, which is denoted
by ${\bfs x}:=(x_1,\dots,x_n)$. Set
\begin{equation}\label{eq: fact patterns: ell_ij}
\ell_{i,j}:=\sum_{k=1}^{i-1}k\lambda_k+(j-1)\,i
\end{equation}
for $1\le j \le \lambda_i$ and $1\le i \le n$. Observe that the
vector of coordinates of a root
$\alpha_{\lambda_1\plp\lambda_{i-1}+j}\in\fqi$ is the sub-array
$(x_{\ell_{i,j}+1},\dots,x_{\ell_{i,j}+i})$ of $\bfs x$. With these
notations, the $\lambda_i$ irreducible factors of $f$ of degree $i$
are the polynomials
\begin{equation}\label{eq: fact patterns: gij}g_{i,j}=\prod_{\sigma\in\mathbb G_i}
\Big(T-\big(x_{\ell_{i,j}+1}\sigma(\theta_i)+\dots+
x_{\ell_{i,j}+i}\sigma(\theta_i^{q^{i-1}})\big)\Big)
\end{equation}
for $1\le j \le \lambda_i$. In particular,
\begin{equation}\label{eq: fact patterns: f factored with g_ij}
f=\prod_{i=1}^n\prod_{j=1}^{\lambda_i}g_{i,j}.
\end{equation}

Let $X_1\klk X_n$ be indeterminates over $\cfq$, set $\bfs
X:=(X_1,\dots,X_n)$ and consider the polynomial $G\in
\fq[\bfs{X},T]$ defined as
\begin{equation}\label{eq: fact patterns: pol G}
G:=\prod_{i=1}^n\prod_{j=1}^{\lambda_i}G_{i,j},\quad
G_{i,j}:=\prod_{\sigma\in\mathbb G_i}
\Big(T-\big(X_{\ell_{i,j}+1}\sigma(\theta_i)+
\dots+X_{\ell_{i,j}+i}\sigma(\theta_i^{q^{i-1}})\big)\Big),
\end{equation}
where the $\ell_{i,j}$ are defined as in \eqref{eq: fact patterns:
    ell_ij}. Our previous arguments show that $f\in\mathcal{P}$ has
factorization pattern ${\bfs \lambda}$ if and only if there exists
$\bfs x\in\fq^n$ with $f=G({\bfs x},T)$.

Next we discuss how many elements $\bfs x\in\fq^n$ yield an
arbitrary polynomial $f=G(\bfs x,T)\in\mathcal{P}_{\bfs\lambda}$.
For $\alpha\in\fqi$, we have $\fq(\alpha)=\fqi$ if and only if its
orbit under the action of the Galois group $\G_i$ has exactly $i$
elements. In particular, if $\alpha$ is expressed by its coordinate
vector $\bfs x\in\fq^i$ in the normal basis $\Theta_i$, then the
coordinate vectors of the elements of the orbit of $\alpha$ form a
cycle of length $i$, because the Frobenius map $\sigma_i\in{\mathbb
    G}_i$ permutes cyclically the coordinates. As a consequence, there
is a bijection between cycles of length $i$ in $\fq^i$ and elements
$\alpha\in\fqi$ with $\fq(\alpha)=\fqi$.

To make this relation more precise, we introduce the notion of an
array of type $\bfs \lambda$. Let $\ell_{i,j}$ $(1\le i\le n,\ 1\le
j\le\lambda_i)$ be defined as in \eqref{eq: fact patterns: ell_ij}.
We say that ${\bfs x}=(x_1,\dots, x_n)\in\fq^n$ is of {\em type
$\bfs \lambda$} if and only if each sub-array $\bfs
x_{i,j}:=(x_{\ell_{i,j}+1},\dots,x_{\ell_{i,j}+i})$ is a cycle of
length $i$. The following result relates the quantity
$\mathcal{P}_{\boldsymbol{\lambda}}$ with the set of elements of
$\fq^n$ of type $\bfs \lambda$. The proof of this result is only
sketched here (see \cite[Lemma 2.2]{CeMaPe17} for a full proof).
\begin{lemma}
    \label{lemma: fact patterns: G(x,T) with fact pat lambda}

 For any
    ${\bfs x}=(x_1,\dots, x_n)\in \fq^n$, the polynomial $f:=G({\bfs x},T)$
    has factorization pattern $\bfs \lambda$ if and only if ${\bfs x}$
    is of type $\bfs \lambda$. Furthermore, for each square--free
    polynomial $f\in \mathcal{P}_{\bfs \lambda}$ there are $w({\bfs
        \lambda}):=\prod_{i=1}^n i^{\lambda_i}\lambda_i!$ different ${\bfs
        x}\in \fq^n $ with $f=G({\bfs x},T)$.
\end{lemma}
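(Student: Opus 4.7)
The plan is to exploit the normal-basis representation: writing $\alpha_{i,j}:=x_{\ell_{i,j}+1}\theta_i+\dots+x_{\ell_{i,j}+i}\theta_i^{q^{i-1}}$ for each sub-array $\bfs x_{i,j}$ of $\bfs x$, the Frobenius action $\sigma_i:y\mapsto y^q$ on $\fqi$ corresponds, in the basis $\Theta_i$, to cyclic permutation of coordinates. Hence the orbit of $\alpha_{i,j}$ under $\mathbb{G}_i=\langle\sigma_i\rangle$ has exactly $i$ elements if and only if the cyclic shifts of $\bfs x_{i,j}$ are pairwise distinct, i.e., $\bfs x_{i,j}$ is a cycle of length $i$. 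By definition, $G_{i,j}(\bfs x,T)=\prod_{\sigma\in\mathbb G_i}(T-\sigma(\alpha_{i,j}))$, which is always a polynomial in $\fq[T]$ of degree $i$; more precisely, setting $d_{i,j}:=[\fq(\alpha_{i,j}):\fq]$, one has $d_{i,j}\mid i$ and $G_{i,j}(\bfs x,T)=m_{i,j}(T)^{i/d_{i,j}}$, where $m_{i,j}\in\fq[T]$ is the (irreducible) minimal polynomial of $\alpha_{i,j}$.

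For the forward implication, if $\bfs x$ is of type $\bfs\lambda$, the preceding remark gives $d_{i,j}=i$, so each $G_{i,j}(\bfs x,T)=m_{i,j}(T)$ is irreducible of degree $i$; hence $G(\bfs x,T)=\prod_{i,j}G_{i,j}(\bfs x,T)$ has $\lambda_i$ irreducible factors of degree $i$ for every $i$, so it lies in $\mathcal P_{\bfs\lambda}$. For the converse, suppose $f:=G(\bfs x,T)\in\mathcal P_{\bfs\lambda}$. The total number of irreducible factors of $f$ counted with multiplicity equals $\sum_i\lambda_i$; on the other hand, factoring through the $G_{i,j}$ yields
\[
\sum_{i=1}^n\lambda_i=\sum_{i,j}\frac{i}{d_{i,j}}\ge\sum_{i,j}1=\sum_{i=1}^n\lambda_i,
\]
with equality only if $d_{i,j}=i$ for every pair $(i,j)$. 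Translating back through the normal-basis dictionary, this forces every sub-array $\bfs x_{i,j}$ to be a cycle of length $i$, i.e., $\bfs x$ is of type $\bfs\lambda$. The delicate point here, which I would highlight, is that this counting comparison works only for the \emph{total} number of factors, but since each summand $i/d_{i,j}\ge 1$, equality of the totals already pins down each $d_{i,j}$.

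Finally, for the cardinality statement, fix a square-free $f\in\mathcal P_{\bfs\lambda}$ and let $h_{i,1},\dots,h_{i,\lambda_i}$ be its pairwise distinct monic irreducible factors of degree $i$. By the equivalence just proved, any $\bfs x$ with $f=G(\bfs x,T)$ is of type $\bfs\lambda$, and the multiset $\{G_{i,j}(\bfs x,T)\}_{j}$ coincides with $\{h_{i,k}\}_{k}$ for each $i$. To reconstruct $\bfs x$, for each $i$ I would (i) choose a bijection between the $\lambda_i$ indices $j$ and the $\lambda_i$ factors $h_{i,k}$, contributing $\lambda_i!$ possibilities; (ii) for each pair $(i,j)$, once the assigned factor $h_{i,k}$ is fixed, choose one of its $i$ roots in $\fqi$ to play the role of $\alpha_{i,j}$, contributing $i^{\lambda_i}$ possibilities for each $i$; (iii) observe that the choice of $\alpha_{i,j}$ uniquely determines the coordinate vector $\bfs x_{i,j}$ via the normal basis $\Theta_i$. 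Distinct choices yield distinct $\bfs x$, and every such $\bfs x$ satisfies $f=G(\bfs x,T)$, giving exactly $\prod_{i=1}^n i^{\lambda_i}\lambda_i!=w(\bfs\lambda)$ preimages, as asserted.
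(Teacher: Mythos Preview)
Your proof is correct and follows essentially the same route as the paper's (sketched) argument: both use the normal-basis dictionary to translate between the Frobenius orbit of $\alpha_{i,j}$ and cyclic shifts of the sub-array $\bfs x_{i,j}$, and both obtain the count $w(\bfs\lambda)$ by combining the $\lambda_i!$ ways to match indices to irreducible factors of degree $i$ with the $i$ choices of root for each factor.

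The one place where your argument is more explicit than the paper's sketch is the converse direction. The paper simply asserts that if $G(\bfs x,T)$ has factorization pattern $\bfs\lambda$ then each $g_{i,j}$ is irreducible, deferring justification to \cite{CeMaPe17}. Your counting inequality $\sum_i\lambda_i=\sum_{i,j}i/d_{i,j}\ge\sum_{i,j}1=\sum_i\lambda_i$, using that the total number of irreducible factors with multiplicity is an invariant, is a clean self-contained way to force $d_{i,j}=i$ for every pair. This is a nice touch that makes the argument independent of the external reference.
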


\begin{proof}[Sketch of proof]
    Let $\Theta_1,\dots,\Theta_n$ be the normal bases introduced before.
    Each $\bfs x\in \fq^n$ is associated with a unique finite sequence
    of elements $\alpha_k$ $(1\le k\le \lambda_1+\dots+\lambda_n)$ as
    follows: each $\alpha_{\lambda_1\plp\lambda_{i-1}+j}$ with $1\le
    j\le\lambda_i$ is the element of $\fqi$ whose coordinate vector in
    the basis $\Theta_i$ is the sub-array $(x_{\ell_{i,j}+1},
    \dots,x_{\ell_{i,j}+i})$ of $\bfs x$.
    Suppose that $G({\bfs x},T)$ has factorization pattern $\bfs
    \lambda$ for a given $\bfs x\in\fq^n$. Fix $(i,j)$ with $1\le i\le
    n$ and $1\le j\le\lambda_i$. Then $G({\bfs x},T)$ is factored as in
    \eqref{eq: fact patterns: gij}--\eqref{eq: fact patterns: f factored
        with g_ij}, where each $g_{i,j}\in\fq[T]$ is irreducible, and hence
    $\fq(\alpha_{\lambda_1\plp\lambda_{i-1}+j})=\fqi$. We conclude that
    the sub-array $(x_{\ell_{i,j}+1}, \dots,x_{\ell_{i,j}+i})$ defining
    $\alpha_{\lambda_1\plp\lambda_{i-1}+j}$  is a cycle of length $i$.
    This proves that $\bfs x$ is of type $\bfs\lambda$.
%
%    On the other hand, given $\bfs x\in\fq^n$ of type $\bfs \lambda$,
%    fix $(i,j)$ with $1\le i\le n$ and $1\le j\le\lambda_i$. Then
%    $\fq(\alpha_{\lambda_1\plp\lambda_{i-1}+j})=\fqi$, because the
%    sub-array $(x_{\ell_{i,j}+1}, \dots,x_{\ell_{i,j}+i})$ is a cycle of
%    length $i$ and thus the orbit of
%    $\alpha_{\lambda_1\plp\lambda_{i-1}+j}$ under the action of $\mathbb
%    G_i$ has $i$ elements. This implies that the factor $g_{i,j}$ of
%    $G({\bfs x},T)$ defined as in (\ref{eq: fact patterns: gij}) is
%    irreducible of degree $i$. We deduce that $f:=G(\bfs x,T)$ has
%    factorization pattern $\bfs\lambda$.

    Furthermore, for $\bfs x\in\fq^n$ of type $\bfs\lambda$, the
    polynomial $f:=G(\bfs x,T)\in\mathcal{P}_{\bfs\lambda}$ is
    square--free if and only if all the roots
    $\alpha_{\lambda_1\plp\lambda_{i-1}+j}$ with $1\le j\le \lambda_i$
    are pairwise--distinct, non--conjugated elements of $\fqi$. This
    implies that no cyclic permutation of a sub-array
    $(x_{\ell_{i,j}+1}, \dots,x_{\ell_{i,j}+i})$ with $1\le
    j\le\lambda_i$ agrees with another cyclic permutation of another
    sub-array $(x_{\ell_{i,j'}+1}, \dots,x_{\ell_{i,j'}+i})$. As cyclic
    permutations of any of these sub-arrays and permutations of these
    sub-arrays yield elements of $\fq^n$ associated with the same
    polynomial $f$, there are $w({\bfs
        \lambda}):=\prod_{i=1}^n i^{\lambda_i}\lambda_i!$ different elements
    $\bfs x\in\fq^n$ with $f=G(\bfs x,T)$.
\end{proof}

Consider the polynomial $G$ defined in \eqref{eq: fact patterns: pol
G} as an element of $\fq[\bfs X][T]$. We shall express the
coefficients of $G$ by means of the vector of linear forms $\bfs
Y:=(Y_1\klk Y_n)$, with $Y_i\in\cfq[\bfs X]$ for $1\le i\le n$,
defined in the following way:
\begin{equation}\label{eq: fact patterns: def linear forms Y}
(Y_{\ell_{i,j}+1},\dots,Y_{\ell_{i,j}+i})^{t}:=A_{i}\cdot
(X_{\ell_{i,j}+1},\dots, X_{\ell_{i,j}+i})^{t} \quad(1\le j\le
\lambda_i,\ 1\le i\le n),
\end{equation}
where $A_i\in\fqi^{i\times i}$ is the matrix
$$A_i:=\left(\sigma(\theta_i^{q^{h}})\right)_{\sigma\in {\mathbb G}_i,\, 0\le h\le i-1}.$$
According to (\ref{eq: fact patterns: pol G}), we may express the
polynomial $G$ as
$$G=\prod_{i=1}^n\prod_{j=1}^{\lambda_i}\prod_{k=1}^i(T-Y_{\ell_{i,j}+k})=
\prod_{k=1}^n(T-Y_k)=T^n+\sum_{k=1}^n(-1)^k\,(\Pi_k(\bfs Y))\,
T^{n-k},$$
where $\Pi_1(\bfs Y)\klk \Pi_n(\bfs Y)$ are the elementary symmetric
polynomials of $\fq[\bfs Y]$. By the expression of $G$ in \eqref{eq:
    fact patterns: pol G} we deduce that $G$ belongs to $\fq[{\bfs
    X},T]$, which in particular implies that $\Pi_k(\bfs Y)$ belongs to
$\fq[{\bfs X}]$ for $1\le k\le n$. Combining these arguments with
Lemma \ref{lemma: fact patterns: G(x,T) with fact pat lambda} we
obtain the following result.
\begin{lemma}
    \label{lemma: fact patterns: sym pols and pattern lambda} A
    polynomial $f:=T^n+a_{n-1}T^{n-1}\plp a_0\in\mathcal{P}$ has
    factorization pattern $\bfs \lambda$ if and only if there exists
    $\bfs{x}\in\fq^n$ of type $\bfs \lambda$ such that
    \begin{equation}\label{eq: fact patterns: sym pols and pattern lambda}
    a_j= (-1)^{n-j}\,\Pi_{n-j}(\bfs Y(\bfs x)) \quad(0\le j\le n-1).
    \end{equation}
    In particular, for $f$ square--free, there are $w(\bfs \lambda)$
    elements $\bfs x$ for which (\ref{eq: fact patterns: sym pols and
        pattern lambda}) holds.
\end{lemma}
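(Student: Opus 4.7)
The statement is essentially a translation of the previously established Lemma \emph{``$G(\bfs x,T)$ with fact pat $\bfs\lambda$''} into the language of coefficients, using the expansion of $G$ in terms of the elementary symmetric polynomials $\Pi_k(\bfs Y)$ that has just been derived. So the plan is to chain together two facts already at our disposal rather than to re-prove anything from scratch.

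First, I would recall the identity
\begin{equation*}
G(\bfs X,T)=\prod_{k=1}^{n}(T-Y_k(\bfs X))=T^{n}+\sum_{k=1}^{n}(-1)^{k}\Pi_k(\bfs Y(\bfs X))\,T^{n-k},
\end{equation*}
already established in the paragraph preceding the statement, together with the fact that each $\Pi_k(\bfs Y(\bfs X))$ lies in $\fq[\bfs X]$. Specialising $\bfs X$ to $\bfs x\in\fq^n$, a polynomial $f=T^n+a_{n-1}T^{n-1}+\cdots+a_0$ coincides with $G(\bfs x,T)$ if and only if the coefficients match, i.e.\ $a_{n-k}=(-1)^{k}\Pi_k(\bfs Y(\bfs x))$ for $1\le k\le n$; reindexing via $j:=n-k$ yields exactly the relations \eqref{eq: fact patterns: sym pols and pattern lambda}.

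Next, I would invoke Lemma \ref{lemma: fact patterns: G(x,T) with fact pat lambda}: $f=G(\bfs x,T)$ has factorisation pattern $\bfs\lambda$ if and only if $\bfs x\in\fq^n$ is of type $\bfs\lambda$. Combining this equivalence with the coefficient-matching observation above gives both implications of the biconditional: if $f\in\mathcal{P}_{\bfs\lambda}$ then any $\bfs x$ with $f=G(\bfs x,T)$ is automatically of type $\bfs\lambda$ and satisfies the relations; conversely, if such an $\bfs x$ of type $\bfs\lambda$ exists, then $f=G(\bfs x,T)$ has factorisation pattern $\bfs\lambda$.

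Finally, for the square-free count, the second assertion of Lemma \ref{lemma: fact patterns: G(x,T) with fact pat lambda} states that for $f\in\mathcal{P}_{\bfs\lambda}$ square-free there are exactly $w(\bfs\lambda)=\prod_{i=1}^{n}i^{\lambda_i}\lambda_i!$ elements $\bfs x\in\fq^n$ with $f=G(\bfs x,T)$; via the coefficient matching these are precisely the $\bfs x$ satisfying \eqref{eq: fact patterns: sym pols and pattern lambda}, giving the claimed count. There is no real obstacle here, since all the combinatorial content (the cycle bookkeeping, the role of normal bases, and the cardinality $w(\bfs\lambda)$) has been absorbed into the previous lemma; the only thing to check carefully is that the indexing $a_j\leftrightarrow (-1)^{n-j}\Pi_{n-j}(\bfs Y(\bfs x))$ is consistent with the expansion of $G$ in powers of $T$.
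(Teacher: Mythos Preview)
Your proposal is correct and matches the paper's own reasoning exactly: the paper does not write out a separate proof for this lemma but simply states it after deriving the expansion $G=T^n+\sum_{k=1}^n(-1)^k\Pi_k(\bfs Y)T^{n-k}$ and remarking that ``combining these arguments with Lemma~\ref{lemma: fact patterns: G(x,T) with fact pat lambda} we obtain the following result.'' Your write-up is precisely that combination made explicit, with the coefficient matching and the reindexing $j=n-k$ spelled out.
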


As a consequence, we may express the condition that an element of
the family $\mathcal{A}$ of \eqref{eq: def non-linear family A} has
factorization pattern $\bfs \lambda$ in terms of the elementary
symmetric polynomials $\Pi_1\klk\Pi_{n-1}$ of $\fq[\bfs Y]$.
\begin{corollary}\label{coro: fact patterns: systems pattern lambda}
A polynomial $f:=T^n+a_{n-1}T^{n-1}\plp a_0\in\mathcal{A}$ has
factorization pattern $\bfs \lambda$ if and only if there exists
$\bfs{x}\in\fq^n$ of type $\bfs \lambda$ such that \eqref{eq: fact
patterns: sym pols and pattern lambda} and
\begin{equation}\label{eq: fact patterns: systems pattern lambda}
G_j\big(-\Pi_1(\bfs Y(\bfs x))\klk (-1)^{n-1}\, \Pi_{n-1}(\bfs
Y(\bfs x)) \big)=0 \quad(1\le j\le m)
\end{equation}
hold. In particular, if $f:=G(\bfs x,T)\in\mathcal{A}_{\bfs\lambda}$
is square--free, then there are $w(\bfs \lambda)$ elements $\bfs x$
for which \eqref{eq: fact patterns: systems pattern lambda} holds.
\end{corollary}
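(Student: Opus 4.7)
The plan is to observe that this corollary follows directly by combining Lemma \ref{lemma: fact patterns: sym pols and pattern lambda} with the defining condition of the family $\mathcal{A}$ in \eqref{eq: def non-linear family A}, so the proof is essentially a substitution.

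First, I would use Lemma \ref{lemma: fact patterns: sym pols and pattern lambda} to express the fact that $f = T^n + a_{n-1}T^{n-1} + \cdots + a_0 \in \mathcal{P}$ has factorization pattern $\bfs\lambda$ in terms of coordinates: this holds if and only if there is $\bfs x \in \fq^n$ of type $\bfs\lambda$ satisfying $a_j = (-1)^{n-j}\Pi_{n-j}(\bfs Y(\bfs x))$ for $0 \le j \le n-1$, which is exactly \eqref{eq: fact patterns: sym pols and pattern lambda}. In particular, for $1 \le j \le n-1$ we have $a_{n-j} = (-1)^j \Pi_j(\bfs Y(\bfs x))$.

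Next, I would substitute these identities into the defining equations of $\mathcal{A}$. By \eqref{eq: def non-linear family A}, a polynomial $f \in \mathcal{P}$ with factorization pattern $\bfs\lambda$ lies in $\mathcal{A}$ if and only if $G_i(a_{n-1}, \ldots, a_1) = 0$ for $1 \le i \le m$. Replacing each $a_{n-j}$ by $(-1)^j \Pi_j(\bfs Y(\bfs x))$ converts these conditions into
\[
G_i\bigl(-\Pi_1(\bfs Y(\bfs x)), \ldots, (-1)^{n-1}\Pi_{n-1}(\bfs Y(\bfs x))\bigr) = 0 \quad (1 \le i \le m),
\]
which is exactly \eqref{eq: fact patterns: systems pattern lambda}. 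Combining both equivalences yields the first claim of the corollary.

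For the second claim, I would invoke the last sentence of Lemma \ref{lemma: fact patterns: sym pols and pattern lambda}, which asserts that for each square-free $f \in \mathcal{P}_{\bfs\lambda}$ there are exactly $w(\bfs\lambda)$ elements $\bfs x \in \fq^n$ with $f = G(\bfs x, T)$, equivalently with \eqref{eq: fact patterns: sym pols and pattern lambda} holding; since any such $\bfs x$ automatically satisfies \eqref{eq: fact patterns: systems pattern lambda} by the first part of the corollary, the same count applies. There is no real obstacle here: the work has already been done in Lemma \ref{lemma: fact patterns: sym pols and pattern lambda}, and the corollary is merely its translation to the nonlinear family $\mathcal{A}$.
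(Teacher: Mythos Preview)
Your proposal is correct and matches the paper's approach exactly: the paper does not even give a separate proof of this corollary, stating it as an immediate consequence of Lemma~\ref{lemma: fact patterns: sym pols and pattern lambda} together with the defining equations~\eqref{eq: def non-linear family A} of $\mathcal{A}$, which is precisely the substitution you carry out.
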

%
%----------------------------------------------------------------------
%----------------------------------------------------------------------
%
\subsection{The number of polynomials in $\mathcal{A}_{\bfs\lambda}$}
%
%Let $m$, $n$ and $s$ be positive integers with $q>n$ and  $m \leq s
%\leq n-m-2$.
Given a factorization pattern $\bfs{\lambda}:=1^{\lambda_1} \cdots
n^{\lambda_n}$, consider the set $\mathcal{A}_{\bfs\lambda}$ of
elements of the family $\mathcal{A}\subset \mathcal{P}$ of
\eqref{eq: def non-linear family A} having factorization pattern
$\bfs{\lambda}$. In this section we estimate the number of elements
of
$\mathcal{A}_{\bfs\lambda}$. For this purpose, %we shall express this
%quantity in terms of the number of $\fq$--rational zeros of certain
%symmetric polynomials of $\fq[\bfs X]:=\fq[X_1,\ldots, X_n]$.
%
in Corollary \ref{coro: fact patterns: systems pattern lambda} we
associate to $\mathcal{A}_{\bfs \lambda}$ the following polynomials
of $\fq[\bfs X]$:
\begin{equation}\label{eq: geometry: def R_j}
F_j:=F_j^{\bfs\lambda}:= G_j\big(-\Pi_1(\bfs Y(\bfs x))\klk
(-1)^{n-1}\, \Pi_{n-1}(\bfs Y(\bfs x))\big)\quad (1\le j\le m).
\end{equation}
Up to the linear change of coordinates defined by $\bfs Y:=(-Y_1\klk
(-1)^nY_n)$, where $\bfs Y$ is the vector of linear forms of
$\cfq[\bfs X]$ defined  in \eqref{eq: fact patterns: def linear
forms Y}, we may express each $F_j$ as a polynomial in the first
$n-1$ elementary symmetric polynomials $\Pi_1,\ldots,\Pi_{n-1}$ of
$\fq[\bfs Y]$. More precisely, let $Z_1\klk Z_{n-1}$ be new
indeterminates over $\cfq$. Then we may write
\begin{equation}\label{eq: polynomials F_j patterns}
F_j=G_j(\Pi_1,\ldots,\Pi_{n-1})\quad (1\le j\le m).
\end{equation}
Applying Corollary \ref{coro: nb point V_r distinct coordinates} we
obtain the following result.
\begin{theorem}\label{th: inequality of V(fq) I}
Suppose that $\mathrm{char}(\fq)$ does not divide $n(n-1)(n-2)$.
Suppose further that $G_1,\ldots,G_m$ satisfy assumptions $({\sf
A}_1)$--$({\sf A}_2)$ of Section \ref{subsec: complete
intersections} and do not depend on $Z_{n-2},Z_{n-1},Z_n$, namely
$G_1,\ldots,G_m\in\fq[Z_1,\ldots,Z_{n-3}]$. Let
$V:=V(F_1,\ldots,F_m)\subset \A^n$,
\begin{equation}\label{eq: fact patterns: definition V=}
V^{=}:=\mathop{\bigcup_{1\le i\le n}}_{ 1\leq j_1 <j_2\leq
    \lambda _i,\, \,  1\leq k_1 <k_2 \leq i}
V\cap\{Y_{\ell_{i,j_1}+k_1}=Y_{\ell_{i,j_2}+k_2}\},
\end{equation}
and $V^{\neq}:=V\setminus V^{=}$, where $Y_{\ell_{i,j}+k}$ are the
linear forms of \eqref{eq: fact patterns: def linear forms Y}. Then
\begin{align*}
\big||V^{\neq}(\fq)|-q^{n-m}\big|&\leq 14 D^3
\delta^2(1+q^{-1})q^{n-m-1}+n^2\delta q^{n-m-1},
\end{align*}
where $\delta:=\prod_{i=1}^m \wt(G_i)$ and
$D:=\sum_{i=1}^m(\wt(G_i)-1)$.
\end{theorem}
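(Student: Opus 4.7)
The plan is to deduce this estimate as a direct instance of Corollary \ref{coro: nb point V_r distinct coordinates}, obtained by taking $\bfs Y := (Y_1,\ldots,Y_n)$ in place of the ambient variables $\bfs X$ of Section \ref{section: singular locus F_s not depending on Pi_(m-1),Pi_m}, and $\bfs F := (F_1,\ldots,F_m)$ of \eqref{eq: polynomials F_j patterns} in place of $\bfs F_s$. Under this dictionary the ambient dimension ``$m$'' of Section \ref{section: singular locus F_s not depending on Pi_(m-1),Pi_m} becomes $n$, the number of equations ``$s$'' becomes $m$, and since by hypothesis $G_j \in \fq[Z_1,\ldots,Z_{n-3}]$, the system $\bfs F$ does not depend on $\Pi_{n-2}(\bfs Y), \Pi_{n-1}(\bfs Y), \Pi_n(\bfs Y)$, so we are in the case $k=3$. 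The char condition $\mathrm{char}(\fq)\nmid n(n-1)(n-2)$ matches the one required for the $k=3$ case of Theorem \ref{th: estimate |V| without Pi_(m-1),Pi_m}, and the assumptions $({\sf A}_1), ({\sf A}_2)$ on $G_1,\ldots,G_m$ transfer directly.

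Invoking Corollary \ref{coro: nb point V_r distinct coordinates} with $k=3$, taking $\mathcal{S}$ to be the set of pairs of indices $(\ell_{i,j_1}+k_1,\ell_{i,j_2}+k_2)$ that appear in the union defining $V^{=}$, one obtains
\begin{equation*}
\big||V^{\neq}(\fq)|-q^{n-m}\big|\le 14D^3\delta^2(1+q^{-1})q^{n-m-1}+|\mathcal{S}|\,\delta\,q^{n-m-1}.
\end{equation*}
It remains to bound $|\mathcal{S}|$. The pairs are indexed by tuples $(i,j_1,j_2,k_1,k_2)$ with $1 \le i \le n$, $1 \le j_1 < j_2 \le \lambda_i$ and $1 \le k_1 < k_2 \le i$, and the relations $\ell_{i,j_1}+k_1 < \ell_{i,j_2}+k_2$ ensure genuine distinct pairs of coordinates, so
\begin{equation*}
|\mathcal{S}|\le \sum_{i=1}^n\binom{\lambda_i}{2}\binom{i}{2} \le \frac{1}{4}\sum_{i=1}^n (i\lambda_i)^2 \le \frac{1}{4}\biggl(\sum_{i=1}^n i\lambda_i\biggr)^{\!2} = \frac{n^2}{4}\le n^2,
\end{equation*}
using $\binom{a}{2}\le a^2/2$ and the elementary inequality $\sum a_i^2\le (\sum a_i)^2$ for non-negative reals. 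Substituting this into the previous display yields exactly the claimed estimate.

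The main (and essentially only) obstacle is bookkeeping: verifying that assumptions $({\sf A}_1), ({\sf A}_2)$ for $G_1,\ldots,G_m$ with respect to $\bfs Z$ translate into the corresponding assumptions of Section \ref{section: singular locus F_s not depending on Pi_(m-1),Pi_m} for the symmetric system $F_j = G_j(\Pi_1(\bfs Y),\ldots,\Pi_{n-3}(\bfs Y))$, and matching our set $\mathcal{S}$ of pairs of $Y$-indices with the abstract $\mathcal{S}$ of Corollary \ref{coro: nb point V_r distinct coordinates}. Both steps are routine once the dictionary is fixed.
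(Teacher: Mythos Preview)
Your proof is correct and follows essentially the same approach as the paper's, which consists of a one-line invocation of Corollary \ref{coro: nb point V_r distinct coordinates} with $k=3$. The only point the paper makes explicit that you leave implicit is the identity $\deg F_i=\wt(G_i)$ for $1\le i\le m$, needed so that the $\delta$ and $D$ of Corollary \ref{coro: nb point V_r distinct coordinates} (defined via $d_i=\deg F_i$) agree with those in the present statement; conversely, you supply the routine bound $|\mathcal{S}|\le n^2$ (which would also follow from the cruder $|\mathcal{S}|\le\binom{n}{2}$) that the paper leaves unstated.
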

\begin{proof}
The statement follows immediately applying Corollary \ref{coro: nb
point V_r distinct coordinates} with $k=3$, taking into account that
$\wt(G_i)=\deg F_i$ for $1\le i\le m$.
\end{proof}

Corollary \ref{coro: fact patterns: systems pattern lambda} relates
the number $|V(\fq)|$ to the quantity
$|\mathcal{A}_{\bfs{\lambda}}|$. More precisely, let $\bfs x:=(\bfs
x_{i,j}:1\le i\le n,1\le j\le \lambda_i)\in\fq^n$ be an
$\fq$--rational zero of $F_1\klk F_m$ of type $\bfs\lambda$. Then
$\bfs x$ is associated with an element
$f\in\mathcal{A}_{\bfs\lambda}$ having $Y_{\ell_{i,j}+k}(\bfs
x_{i,j})$ as an $\fqi$--root for $1\le i\le n$, $1\le j\le\lambda_i$
and $1\le k\le i$, where $Y_{\ell_{i,j}+k}$ is the linear form
defined as in \eqref{eq: fact patterns: def linear forms Y}.

Let $\mathcal{A}_{\bfs\lambda}^{sq}:=\{f\in
\mathcal{A}_{\bfs\lambda}: f \mbox{ is square--free}\}$ and
$\mathcal{A}_{\bfs\lambda}^{nsq}:=\mathcal{A}_{\bfs
\lambda}\setminus \mathcal{A}_{\bfs\lambda}^{sq}$. Corollary
\ref{coro: fact patterns: systems pattern lambda} further asserts
that any element $f\in \mathcal{A}_{\bfs\lambda}^{sq}$ is associated
with $w(\bfs\lambda):=\prod_{i=1}^n i^{\lambda_i}\lambda_i!$ common
$\fq$--rational zeros of $F_1\klk F_m$ of type $\bfs\lambda$.
Observe that $\bfs x\in\fq^n$ is of type $\bfs\lambda$ if and only
if $Y_{\ell_{i,j}+k_1}(\bfs x) \neq Y_{\ell_{i,j}+k_2}(\bfs x)$ for
$1\leq i\leq n$, $1\leq j\leq \lambda_i$ and $1\leq k_1 <k_2 \leq
i$. Furthermore, an $\bfs x\in\fq^n$ of type $\bfs\lambda$ is
associated with $f\in\mathcal{A}_{\bfs\lambda}^{sq}$ if and only if
$Y_{\ell_{i,j_1}+k_1}(\bfs x) \neq Y_{\ell_{i,j_2}+k_2}(\bfs x)$ for
$1\leq i\leq n$, $1\leq j_1<j_2\leq \lambda_i$ and $1\leq k_1 <k_2
\leq i$. As a consequence, we see that
$|\mathcal{A}_{\bfs\lambda}^{sq}| =\mathcal{T}(\bfs\lambda)
\big|V^{\neq}(\fq)\big|$, which implies
$$
\big||\mathcal{A}_{\bfs\lambda}^{sq}|
-\mathcal{T}(\bfs\lambda)\,q^{n-m}\big| =
\mathcal{T}(\bfs\lambda)\,\big||V^{\neq}(\fq)|-q^{n-m}\big|.
$$
From Theorem \ref{th: inequality of V(fq) I} we deduce that
\begin{align*}
\big||\mathcal{A}_{\bfs\lambda}^{sq}|
-\mathcal{T}(\bfs\lambda)\,q^{n-m}\big| &\le
\,\mathcal{T}(\bfs\lambda)\big(14\,q^{n-m-1}(1+q^{-1})
D^3\delta^2+ n^2\delta\, q^{n-m-1}\big)\\
&\le q^{n-m-1} \mathcal{T}(\bfs\lambda)\,\big(17\, D^3\delta^2+
n^2\delta\big).
\end{align*}

Now we are able to estimate $|\mathcal{A}_{\bfs \lambda}|$. We have
\begin{align}
\big||\mathcal{A}_{\bfs\lambda}|
-\mathcal{T}(\bfs\lambda)\,q^{n-m}\big|&=
\big||\mathcal{A}_{\bfs\lambda}^{sq}|+
|\mathcal{A}_{\bfs\lambda}^{nsq}|-\mathcal{T}(\bfs\lambda)q^{n-m}\big|\nonumber\\
&\le q^{n-m-1} \mathcal{T}(\bfs\lambda)\,\big(17\, D^3\delta^2+
n^2\delta\big)+ |\mathcal{A}_{\bfs\lambda}^{nsq}|. \label{eq:
estimates: estimate A_lambda aux}
\end{align}
It remains to obtain an upper bound for
$|\mathcal{A}_{\bfs\lambda}^{nsq}|$. Let
$f\in\mathcal{A}_{\bfs\lambda}^{nsq}$ and let $\bfs x\in\fq^n$ be of
type $\bfs\lambda$ such that \eqref{eq: fact patterns: systems
pattern lambda} holds. Then there exists $1\leq i\leq n$, $1\leq
j_1<j_2\leq \lambda_i$ and $1\leq k_1 <k_2 \leq i$ such that
$Y_{\ell_{i,j_1}+k_1}(\bfs x)= Y_{\ell_{i,j_2}+k_2}(\bfs x)$ holds.
In other words, $\bfs x$ belongs to the variety $V^=$ of \eqref{eq:
fact patterns: definition V=}. According to Proposition
\ref{theorem: nb point V_r distinct coordinates}, each intersection
$V\cap\{Y_{\ell_{i,j_1}+k_1}=Y_{\ell_{i,j_2}+k_2}\}$ is of pure
dimension $n-m-1$. By the B\'ezout inequality \eqref{eq: Bezout} we
conclude that $\deg V^=\le n^2\deg V=n(n-1)\delta$. Then \eqref{eq:
upper bound -- affine gral} implies $|V^=(\fq)|\le n(n-1)q^{n-m-1}$.
Finally, as each $f\in\mathcal{A}_{\bfs\lambda}$ is associated with
at most $w(\bfs\lambda)$ elements $\bfs x\in V^=(\fq)$, we conclude
that
\begin{equation}\label{eq: estimates: upper bound discr locus}
|\mathcal{A}_{\bfs\lambda}^{nsq}|\le |\mathcal{A}^{nsq}|\le
n(n-1)w(\bfs\lambda) \,q^{n-m-1}.
\end{equation}
Hence, combining \eqref{eq: estimates: estimate A_lambda aux} and
\eqref{eq: estimates: upper bound discr locus} we conclude that
\begin{align*}
\big||\mathcal{A}_{\bfs\lambda}|
-\mathcal{T}(\bfs\lambda)\,q^{n-m}\big|&\le
\mathcal{T}(\bfs\lambda)\,\big||V(\fq)|-q^{n-m}\big|+ n^2 q^{n-m-1}\\
&\le q^{n-m-1}\big(17\,\mathcal{T}(\bfs\lambda) D^3\delta^2+
2\,\mathcal{T}(\bfs\lambda)\,n^2\delta\big).
\end{align*}
In other words, we have the following result.
\begin{theorem}\label{theorem: estimate fact patterns}
Suppose that $\mathrm{char}(\fq)$ does not divide $n(n-1)(n-2)$.
Suppose further that $G_1,\ldots,G_m$ satisfy assumptions $({\sf
A}_1)$--$({\sf A}_2)$ of Section \ref{subsec: complete
intersections} and do not depend on $Z_{n-2},Z_{n-1},Z_n$, namely
$G_1,\ldots,G_m\in\fq[Z_1,\ldots,Z_{n-3}]$. We have
\begin{align*}
\big||\mathcal{A}_{\bfs\lambda}^{sq}|-\mathcal{T}(\bfs\lambda)\,q^{n-m}\big|&\le
q^{n-m-1}\mathcal{T}(\bfs\lambda)\,\big(17 D^3\delta^2+ n^2\delta\big),\\
\big||\mathcal{A}_{\bfs\lambda}|-\mathcal{T}(\bfs\lambda)\,q^{n-m}\big|&\le
q^{n-m-1}\mathcal{T}(\bfs\lambda)\big(17\,D^3\delta^2+
2\,n^2\delta\big),
\end{align*}
where $\delta:=\prod_{i=1}^m \wt(G_i)$ and
$D:=\sum_{i=1}^m(\wt(G_i)-1)$.
\end{theorem}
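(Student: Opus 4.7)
The plan is to translate the counting problem for $|\mathcal{A}_{\bfs\lambda}|$ into a counting problem on the $\fq$-rational points of the affine $\fq$-variety $V:=V(F_1,\ldots,F_m)\subset\A^n$, where the $F_j$ are the polynomials of \eqref{eq: polynomials F_j patterns}, and then apply Theorem \ref{th: inequality of V(fq) I} together with the dimensional control given by Proposition \ref{theorem: nb point V_r distinct coordinates}. The crucial structural observation is that, by construction, the $F_j$ satisfy the hypotheses of Theorem \ref{th: estimate |V| without Pi_(m-1),Pi_m} with $k=3$: they are symmetric polynomials in the linear forms $\bfs Y$ expressible only through $\Pi_1,\ldots,\Pi_{n-3}$, and they inherit assumptions $({\sf A}_1)$ and $({\sf A}_2)$ from $G_1,\ldots,G_m$.

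First I would address $|\mathcal{A}_{\bfs\lambda}^{sq}|$. By Corollary \ref{coro: fact patterns: systems pattern lambda}, every square-free $f\in\mathcal{A}_{\bfs\lambda}^{sq}$ corresponds to exactly $w(\bfs\lambda)$ vectors $\bfs x\in\fq^n$ of type $\bfs\lambda$ satisfying $F_1(\bfs x)=\cdots=F_m(\bfs x)=0$. The ``type $\bfs\lambda$ with square-free image'' condition is precisely the statement that $\bfs x\in V^{\neq}(\fq)$, with $V^{\neq}$ as defined in the statement of Theorem \ref{th: inequality of V(fq) I}. Therefore $|\mathcal{A}_{\bfs\lambda}^{sq}|=\mathcal{T}(\bfs\lambda)|V^{\neq}(\fq)|$, and multiplying the estimate from Theorem \ref{th: inequality of V(fq) I} by $\mathcal{T}(\bfs\lambda)$ yields the first bound, after absorbing the factor $(1+q^{-1})$ into the constant (since $14(1+q^{-1})\le 17$ for any $q\ge 5$, and the low-characteristic cases are ruled out by the hypothesis on $\mathrm{char}(\fq)$).

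Next I would bound $|\mathcal{A}_{\bfs\lambda}^{nsq}|$. Any non-square-free $f\in\mathcal{A}_{\bfs\lambda}$ must come from some $\bfs x\in\fq^n$ of type $\bfs\lambda$ for which two of the linear forms $Y_{\ell_{i,j_1}+k_1}$ and $Y_{\ell_{i,j_2}+k_2}$ coincide; that is, $\bfs x$ lies in the $\fq$-variety $V^{=}$ of \eqref{eq: fact patterns: definition V=}. Proposition \ref{theorem: nb point V_r distinct coordinates} (applied after the linear change of coordinates identifying $\bfs Y$ with $\bfs X$, so that each constraint $Y_a=Y_b$ becomes a coordinate hyperplane on $V$) shows each component $V\cap\{Y_a=Y_b\}$ has pure dimension $n-m-1$, and the B\'ezout inequality \eqref{eq: Bezout} combined with $|\mathcal{S}|\le\binom{n}{2}<n(n-1)/1$ gives $\deg V^=\le n(n-1)\delta$. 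Invoking \eqref{eq: upper bound -- affine gral} gives $|V^=(\fq)|\le n(n-1)\delta\,q^{n-m-1}$, and dividing by $w(\bfs\lambda)$ (each $f$ is hit at most $w(\bfs\lambda)$ times) produces $|\mathcal{A}_{\bfs\lambda}^{nsq}|\le \mathcal{T}(\bfs\lambda)n(n-1)\delta\,q^{n-m-1}\le \mathcal{T}(\bfs\lambda)n^2\delta\,q^{n-m-1}$.

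Finally, the second bound follows from the triangle inequality
\[
\big||\mathcal{A}_{\bfs\lambda}|-\mathcal{T}(\bfs\lambda)q^{n-m}\big|
\le \big||\mathcal{A}_{\bfs\lambda}^{sq}|-\mathcal{T}(\bfs\lambda)q^{n-m}\big|+|\mathcal{A}_{\bfs\lambda}^{nsq}|,
\]
combining the estimate just derived for $|\mathcal{A}_{\bfs\lambda}^{sq}|$ with the bound on $|\mathcal{A}_{\bfs\lambda}^{nsq}|$. The main obstacle is really the bookkeeping in step two: one must verify that the linear change of coordinates $\bfs Y\leftrightarrow(-Y_1,\ldots,(-1)^nY_n)$ preserves assumptions $({\sf A}_1)$ and $({\sf A}_2)$ for the $F_j$ (it does, being a linear isomorphism commuting with homogeneous-component extraction in the weighted grading) and that Proposition \ref{theorem: nb point V_r distinct coordinates} applies uniformly to every pair of linear forms, not only to the coordinate hyperplanes $\{X_i=X_j\}$; this is immediate because the $F_j$ are symmetric in $\bfs Y$, so permuting the $Y$-indices leaves the variety invariant and reduces every pair $\{Y_a=Y_b\}$ to the archetypal case treated in that proposition.
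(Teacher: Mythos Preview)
Your approach mirrors the paper's exactly: identify $|\mathcal{A}_{\bfs\lambda}^{sq}|=\mathcal{T}(\bfs\lambda)\,|V^{\neq}(\fq)|$ via Corollary~\ref{coro: fact patterns: systems pattern lambda}, apply Theorem~\ref{th: inequality of V(fq) I} for the first estimate, then control $|\mathcal{A}_{\bfs\lambda}^{nsq}|$ through $|V^{=}(\fq)|$ using Proposition~\ref{theorem: nb point V_r distinct coordinates} and the B\'ezout inequality, and finish with the triangle inequality. The absorption $14(1+q^{-1})\le 17$ (valid because the hypothesis on $\mathrm{char}(\fq)$ forces $q\ge 5$) is also how the paper handles the constant.

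There is, however, one genuine slip in your treatment of $|\mathcal{A}_{\bfs\lambda}^{nsq}|$. You write that dividing $|V^{=}(\fq)|$ by $w(\bfs\lambda)$ is justified because ``each $f$ is hit at most $w(\bfs\lambda)$ times''. That implication runs the wrong way: if every $f\in\mathcal{A}_{\bfs\lambda}^{nsq}$ has \emph{at most} $w(\bfs\lambda)$ preimages of type $\bfs\lambda$, one concludes that the number of such preimages is at most $w(\bfs\lambda)\,|\mathcal{A}_{\bfs\lambda}^{nsq}|$, which bounds $|\mathcal{A}_{\bfs\lambda}^{nsq}|$ from \emph{below}, not above. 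What actually gives an upper bound is the trivial fact that each $f\in\mathcal{A}_{\bfs\lambda}^{nsq}$ has at least one preimage of type $\bfs\lambda$ lying in $V^{=}(\fq)$, so that $|\mathcal{A}_{\bfs\lambda}^{nsq}|\le |V^{=}(\fq)|\le n(n-1)\delta\,q^{n-m-1}$. This is precisely the route the paper takes (the displayed inequalities around \eqref{eq: estimates: upper bound discr locus} contain some evident misprints, but the paper does not divide by $w(\bfs\lambda)$ there). With this correction your argument and the paper's coincide step for step.
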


This result strengthens \eqref{eq: intro: Cohen72} and \eqref{eq:
estimate fact patterns intro} in several aspects. A first one is
that the hypotheses on the nonlinear family $\mathcal{A}$ in the
statement of Theorem \ref{theorem: estimate fact patterns} are much
easier to verify than those of \eqref{eq: intro: Cohen72} and
\eqref{eq: estimate fact patterns intro}. A second aspect is that
our error term is of order $\mathcal{O}(q^{n-m-1})$, thus providing
an asymptotic estimate much more precise than those of \eqref{eq:
intro: Cohen72} and \eqref{eq: estimate fact patterns intro}, for
which we provide explicit expressions for the constants underlying
the $\mathcal{O}$--notation with a good behavior. Finally, our
result is valid on an assumption on the characteristic $p$ of $\fq$
which is much wider than that of \eqref{eq: intro: Cohen72}.
%
%----------------------------------------------------------------------
%----------------------------------------------------------------------
%
\subsubsection{Factorization patterns in polynomials with prescribed coefficients}
A classical case which has received particular attention is that of
the elements of $\mathcal{P}$ having certain coefficients
prescribed. Therefore, we briefly state what we obtain in this case.
For this purpose, given $0< i_1<i_2<\cdots< i_m\le n$ and $\bfs
\alpha:=(\alpha_{i_1}\klk \alpha_{i_m})\in\fq^m$, set
$\mathcal{I}:=\{i_1\klk i_m\}$ and consider the set
$\mathcal{A}^\mathcal{I}:= \mathcal{A}(\mathcal{I},\bfs \alpha)$
defined in the following way:
\begin{equation}\label{eq: estimates: family A^m}
\mathcal{A}^\mathcal{I}:=\left\{T^n+a_1T^{n-1}\plp a_n\in\fq[T]:
a_{i_j}=\alpha_{i_j}\ (1\le j\le m)\right\}.
\end{equation}
Denote by $\mathcal{A}^{\mathcal{I},sq}$ the set of
$f\in\mathcal{A}^\mathcal{I}$ which are square--free.

For a given factorization pattern $\bfs\lambda$, let $G\in\fq[\bfs
X,T]$ be the polynomial of \eqref{eq: fact patterns: pol G}.
According to Lemma \ref{lemma: fact patterns: sym pols and pattern
lambda}, an element $f\in \mathcal{A}^\mathcal{I}$ has factorization
pattern $\bfs\lambda$ if and only if there exists $\bfs x$ of type
$\bfs\lambda$ such that
\begin{equation}\label{eq: fact pattern for prescribed coeff}
(-1)^{i_j}\Pi_{i_j}(\bfs Y(\bfs x))=\alpha_{i_j}\quad(1\le j\le m).
\end{equation}
From Theorem \ref{theorem: estimate fact patterns} we deduce the
following result.
\begin{corollary}\label{coro: estimates: estimates for presc coeff}
Suppose that $\mathrm{char}(\fq)$ does not divide $n(n-1)(n-2)$. For
$q > n$ and $i_m\le n-3$, we have
    \begin{align*}
    \big||\mathcal{A}^{\mathcal{I},sq}_{\bfs
        \lambda}|-\mathcal{T}(\bfs\lambda)\,q^{n-m}\big|&\le
    q^{n-m-1}\mathcal{T}(\bfs\lambda)\,\big(17D_{\mathcal{I}}^3\,
    \delta_{\mathcal{I}}^2 + n^2\delta_{\mathcal{I}}\big),\\
    \big||\mathcal{A}^\mathcal{I}_{\bfs
        \lambda}|-\mathcal{T}(\bfs\lambda)\,q^{n-m}\big|&\le
    q^{n-m-1}\mathcal{T}(\bfs\lambda)\big(17\,D_{\mathcal{I}}^3\,
    \delta_{\mathcal{I}}^2 +2\,n^2\delta_{\mathcal{I}}\big),
    \end{align*}
where $\delta_{\mathcal{I}}:=i_1\cdots i_m$ and
$D_{\mathcal{I}}:=\sum_{j=1}^m(i_j-1)$.
\end{corollary}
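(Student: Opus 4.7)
The plan is to realize $\mathcal{A}^{\mathcal{I}}$ as a special instance of the general framework of Theorem \ref{theorem: estimate fact patterns} and then apply that theorem directly. The natural choice of defining polynomials is to set
\[
G_j(Z_1,\ldots,Z_{n-1}) := Z_{i_j} - \alpha_{i_j} \qquad (1\le j\le m),
\]
so that, after the sign change $Z_i \leftrightarrow (-1)^i\Pi_i$ implicit in \eqref{eq: geometry: def R_j}, the condition \eqref{eq: fact pattern for prescribed coeff} characterizing elements of $\mathcal{A}^\mathcal{I}$ with factorization pattern $\bfs\lambda$ becomes exactly the system $F_j(\bfs x)=0$ for $1\le j\le m$ of \eqref{eq: polynomials F_j patterns}.

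Next I would check the hypotheses of Theorem \ref{theorem: estimate fact patterns}. The divisibility hypothesis on $\mathrm{char}(\fq)$ is directly assumed in the corollary. The condition $i_m \le n-3$ ensures $G_1,\ldots,G_m \in \fq[Z_1,\ldots,Z_{n-3}]$, so the $G_j$ do not depend on $Z_{n-2},Z_{n-1},Z_n$ as required. For assumptions $({\sf A}_1)$ and $({\sf A}_2)$: since each $G_j$ is affine-linear, $V(G_1,\ldots,G_m)$ is a nonempty affine subspace; the Jacobian of $(G_1,\ldots,G_m)$ with respect to $(Z_1,\ldots,Z_{n-1})$ consists of $m$ distinct standard basis row vectors (at the positions $i_1<\cdots<i_m$), and therefore has full rank $m$ at every point of $\A^{n-1}$. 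Taking the highest-weight components under $\wt(Z_i):=i$, each $G_j^{\wt}$ equals the linear form $Z_{i_j}$, whose Jacobian is the same standard basis vector, so $({\sf A}_2)$ holds identically.

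Finally, I would compute the weight invariants: since $\wt(G_j)=\wt(Z_{i_j})=i_j$, we obtain
\[
\delta=\prod_{j=1}^m \wt(G_j) = i_1\cdots i_m = \delta_\mathcal{I},\qquad
D=\sum_{j=1}^m (\wt(G_j)-1) = \sum_{j=1}^m (i_j-1) = D_\mathcal{I}.
\]
Substituting these values into the two estimates of Theorem \ref{theorem: estimate fact patterns} yields verbatim the bounds claimed in the corollary for $|\mathcal{A}^{\mathcal{I},sq}_{\bfs\lambda}|$ and $|\mathcal{A}^\mathcal{I}_{\bfs\lambda}|$. The mild ancillary hypothesis $q>n$ is only used to ensure the combinatorial setup of Lemma \ref{lemma: fact patterns: G(x,T) with fact pat lambda} (existence and the type-to-polynomial correspondence via normal bases) is well-defined; it plays no role in the algebraic-geometric input.

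There is no substantive obstacle here: the entire argument is a bookkeeping exercise verifying that the linear system $\{Z_{i_j}=\alpha_{i_j}\}$ fits the hypotheses of Theorem \ref{theorem: estimate fact patterns} and identifying the weight parameters. The only mildly delicate point is keeping track of the sign convention relating $Z_i$ to $(-1)^i\Pi_i$, but since the $G_j$ are linear in a single variable $Z_{i_j}$, this change of variable only affects the constant $\alpha_{i_j}$ and leaves the smoothness assumptions entirely unaffected.
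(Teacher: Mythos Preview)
Your proposal is correct and follows essentially the same route as the paper: define $G_j:=Z_{i_j}-\alpha_{i_j}$, verify that $i_m\le n-3$ forces $G_j\in\fq[Z_1,\ldots,Z_{n-3}]$, check $({\sf A}_1)$ and $({\sf A}_2)$ via the (constant, full-rank) Jacobian of these linear forms and their highest-weight parts $G_j^{\wt}=Z_{i_j}$, identify $\wt(G_j)=i_j$ so that $\delta=\delta_{\mathcal I}$ and $D=D_{\mathcal I}$, and invoke Theorem~\ref{theorem: estimate fact patterns}. Your added remarks on the sign convention and the role of the hypothesis $q>n$ are fine and do not diverge from the paper's argument.
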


\begin{proof}
Let $G_j:=Z_{i_j}-\alpha_{i_j}$ for $1\le j\le m$. Then the
solutions of the set of equations
$$G_j\big(-\Pi_1(\bfs
Y(\bfs X)) \klk (-1)^{n-1}\, \Pi_{n-1}(\bfs Y(\bfs X))\big)=0\quad
(1\le j\le m)$$
defines precisely \eqref{eq: fact pattern for prescribed coeff}. The
condition $i_m\le n-3$ implies that $G_1,\ldots,G_m$ do not depend
on $Z_{n-2},Z_{n-1},Z_n$. Further, as $G_1,\ldots,G_m$ are elements
of degree $1$ whose homogeneous components of degree 1 are linearly
independent, the Jacobian matrix $(\partial \bfs{G}/\partial
\bfs{Z})(\boldsymbol{z})$ of $\boldsymbol{G}:=(G_1\klk G_m)$ with
respect to $\bfs{Z} :=(Z_1,\ldots,Z_{n-3})$ has full rank $m$ for
every $\boldsymbol{z}\in V(G_1,\ldots,G_m)$, namely $G_1,\ldots,G_m$
satisfy assumption $({\sf A}_1)$ of Section \ref{subsec: singular
locus of V}. Finally, the components
$G_1^{\mathrm{wt}}=Z_{i_1},\ldots,G_m^{\mathrm{wt}}=Z_{i_m}$ of
highest weight of $G_1,\ldots,G_m$ are linearly independent
homogeneous polynomials of degree $1$. Hence, $G_1,\ldots,G_m$
satisfy assumption ($\sf A_2$). As a consequence, taking into
account that $\wt(G_j)=i_j$ for $1 \leq j\leq m$, applying Theorem
\ref{theorem: estimate fact patterns} we easily deduce the
corollary.
\end{proof}
%
%---------------------------------------------------------------------
%---------------------------------------------------------------------
%---------------------------------------------------------------------
%---------------------------------------------------------------------
%---------------------------------------------------------------------
%---------------------------------------------------------------------
%---------------------------------------------------------------------
%---------------------------------------------------------------------
%
\section{Deep holes in Reed-Solomon codes}
\label{section: deep holes}
Our second application refers to a problem in Reed-Solomon codes
concerning the existence of deep holes. Given  a subset  ${\sf
D}:=\{x_1,\ldots,x_{n}\}\subset \fq$ and a positive integer  $k\leq
n$, the {\em Reed--Solomon code  of length $n$ and dimension $k$}
over $\fq$ is the following subset of $\fq^n$:
$$
{C}({\sf D},k):  = \{(f(x_{1}),\ldots,f(x_{n})) : f\in \fq[T],\,
\deg f \leq  k-1 \}.$$
The set ${\sf D}$ is called the {\em evaluation set} and the
elements of ${C}({\sf D},k)$ are called {\em codewords} of the code.
When ${\sf D}= \fq^*$, we say that $C({\sf D}, k)$ is the {\em
standard Reed--Solomon code of dimension $k$}.

Let $C: = {C}({\sf D},k)$. For $\bfs{w}\in \fq^{n}$,  we define the
{\em distance of ${\bfs w}$ to the code} $C$ as
$${\sf d}({\bfs w},C):=\displaystyle\min_{\mathbf{c}\in C}{\sf d}
({\bfs w},\bfs{c}),$$
where ${\sf d}$ is the Hamming distance of $\fq^n$. The {\em minimum
distance} ${\sf d}(C)$ of $C$ is the shortest distance between any
two distinct codewords.  The {\em covering radius} of $C$ is defined
as
$$
\rho:=\max_{\bfs{y}\in\mathbb{F}_{\hskip-0.7mmq}^{n}}{\sf
d}(\bfs{y},C).
$$
It is well--known that ${\sf d}(C)= n-k+1$ and $\rho= n-k$ hold.
Finally, we say that a ``word'' ${\bfs w}\in \fq^{n}$ is a {\em deep
hole} if ${\sf d}({{\bfs w}},C)=\rho$ holds.

A decoding algorithm for the code $C$ receives a word ${\bfs
w}\in\fq^{n}$ and outputs the message, namely the codeword that is
most likely to be received as ${\bfs w}$ after transmission, roughly
speaking. One of the most important algorithmic problems in this
setting is that of the \emph{maximum--likelihood decoding}, which
consists in computing the closest codeword to any given word ${\bfs
w}\in\fq^{n}$. Suppose that we receive a word ${\bfs
w}:=(w_1,\ldots,w_n)\in \fq^{n}$. Solving the maximum--likelihood
decoding for ${\bfs w}$ amounts at finding a polynomial $f \in
\fq[T]$ of degree at most $k-1$ satisfying the largest number of
conditions $f(x_i)=w_i$ with $1\leq i\leq n$. By interpolation,
there exists a unique polynomial $f_{\bfs w}$ of degree at most
$n-1$ such that $f_{\bfs w}(x_i)=w_i$ holds for $1\leq i\leq n$. In
this case, we say that the word ${\bfs w}$ was generated by the
polynomial $f_{\bfs w}$. If $\deg f_{\bfs w}\leq k-1$, then ${\bfs
w}$ is a codeword.

In this section our main concern will be the existence of deep holes
of the given standard Reed--Solomon code $C$. According to our
previous remarks, a deep hole can only arise as the word generated
by a polynomial $f \in \fq[T]$ with $k\le \deg f\le n-1$. In this
sense, we have the following result.
\begin{lemma}[{\cite[Corollary 1]{ChMu07}}]\label{prop:poli de grado k genera deep holes}
Polynomials of degree $k$ generate deep holes.
\end{lemma}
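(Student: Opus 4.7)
The plan is to directly compute the distance from the received word to the code and show it equals the covering radius $\rho = n-k$ (where $n = q-1$). Let $f \in \fq[T]$ have degree exactly $k$, and let $\bfs w := (f(x_1),\ldots,f(x_n))$ with $\{x_1,\ldots,x_n\} = \fq^*$. For any codeword $\bfs c \in C$, there is a unique $g \in \fq[T]$ of degree at most $k-1$ with $\bfs c = (g(x_1),\ldots,g(x_n))$, and the Hamming distance ${\sf d}(\bfs w,\bfs c)$ equals $n$ minus the number of indices $i$ at which $f(x_i) = g(x_i)$.

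The key observation is that since $\deg f = k$ and $\deg g \le k-1$, the difference $f - g$ is a nonzero polynomial of degree exactly $k$, and thus has at most $k$ roots in $\fq^*$. Consequently $f$ and $g$ agree on at most $k$ of the evaluation points, which gives ${\sf d}(\bfs w, \bfs c) \ge n - k$ for every $\bfs c \in C$, and hence ${\sf d}(\bfs w, C) \ge n-k = \rho$.

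To get the reverse inequality (which is where equality needs to be witnessed), I would fix any $k$ distinct elements $x_{i_1},\ldots,x_{i_k}$ of $\fq^*$ and use Lagrange interpolation to produce a polynomial $g_0 \in \fq[T]$ of degree at most $k-1$ such that $g_0(x_{i_j}) = f(x_{i_j})$ for $j = 1,\ldots,k$. The corresponding codeword $\bfs c_0$ agrees with $\bfs w$ in at least $k$ coordinates, so ${\sf d}(\bfs w, \bfs c_0) \le n - k$. Combining both inequalities yields ${\sf d}(\bfs w, C) = n - k = \rho$, so $\bfs w$ is a deep hole. There is no serious obstacle here; the argument is a short and direct application of the bound on the number of roots of a nonzero polynomial together with the interpolation property that underlies Reed--Solomon codes.
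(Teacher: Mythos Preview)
Your proof is correct. The paper does not supply its own argument for this lemma; it is simply quoted from \cite[Corollary~1]{ChMu07}. Your direct computation --- bounding the number of agreements between $f$ and any $g$ of degree at most $k-1$ via the root bound for $f-g$, and then exhibiting an interpolating $g_0$ to attain equality --- is exactly the standard proof and is complete as written.
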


Next we reduce further the set of polynomials $f\in\fq[T]$ which are
candidates for generating deep holes. Suppose that we receive a word
${\bfs w} \in \fq^n$, which is generated by a polynomial $f_{\bfs
w}\in\fq[T]$ of degree greater than $k$. We want to know whether
${\bfs w}$ is a deep hole. We can decompose $f_{\bfs w}$ as a sum
$f_{\bfs w}=g+h$, where $g$ consists of the sum of the monomials of
$f_{\bfs w}$ of degree at least $k$ and $h$ consists of those of
degree at most $k-1$.
\begin{remark}[{\cite[Remark 1.2]{CaMaPr12}}] \label{remark:observacion sobre las distancias}
Let ${\bfs w}_g$ and ${\bfs w}_h$ be the words generated by $g$ and
$h$ respectively. Then ${\bfs w}$ is a deep hole if and only if
${\bfs w}_g$ is a deep hole.
\end{remark}

From Remark \ref{remark:observacion sobre las distancias} it follows
that any deep hole of the standard Reed--Solomon code $C$ is
obtained as the word ${\bfs w}_f$ generated by a polynomial $f \in
\fq[T]$ of the form
\begin{equation}\label{eq:polinomio f}
f:=T^{k+d}+f_{d-1}T^{k+d-1}+\cdots+f_{0}T^{k},
\end{equation}
where $d$ is a nonnegative integer with $k+d<q-1$. In view of Lemma
\ref{prop:poli de grado k genera deep holes}, we shall only discuss
the case $d\ge 1$.

From now on we shall consider the standard Reed--Solomon code
$C:=C(\fq^*,k)$. In \cite{ChMu07} it is conjectured that the
reciprocal of Lemma \ref{prop:poli de grado k genera deep holes}
also holds, namely a word ${\bfs w}$ is a deep hole of $C$ if and
only if it is generated by a polynomial $f\in\fq[T]$ of degree $k$.
Furthermore, the existence of deep holes of $C$ is related to the
non-existence of $\fq$--rational points, namely points whose
coordinates belong to $\fq$, of a certain family of hypersurfaces,
in the way that we now explain. Fix $f\in\fq[T]$ as in
\eqref{eq:polinomio f}  and let  ${\bfs w}_f$ be the generated word.
Let $X_1,\ldots, X_{k+1}$ be indeterminates over $\cfq$ and let
$Q\in \fq[X_1,\ldots,X_{k+1}][T]$ be the polynomial
$$Q=(T-X_1)\cdots(T-X_{k+1}).$$
There exists $R_f\in\fq[X_1,\ldots,X_{k+1}][T]$ with $\deg R_f\leq
k$ such that
\begin{equation}\label{eq:congruencia}
f\equiv R_f\mod{Q}.
\end{equation}
Assume that $R_f$ has degree $k$ and denote by $H_f\in
\fq[X_1,\ldots, X_{k+1}]$ its leading coefficient. Suppose that
there exists a vector $\bfs{x}\in (\fq^*)^{k+1}$  with
pairwise--distinct coordinates such that $H_f(\bfs{x})=0$ holds.
This implies that $r:=R_f(\bfs{x},T)$ has degree at most $k-1$ and
hence generates a codeword ${\bfs w}_r$. By \eqref{eq:congruencia}
we deduce that
$$d({\bfs w}_f,C) \leq d({\bfs w}_f,{\bfs w}_r)\leq q-k-2$$
holds, and thus ${\bfs w}_f$ is  not a deep hole.

As a consequence, the given polynomial $f$ does not generate a deep
hole of $C$ if and only if there exists a zero
$\bfs{x}:=(x_1,\ldots, x_{k+1})\in\fq^{k+1}$ of $H_f$ with nonzero,
pairwise--distinct coordinates, namely a solution
$\bfs{x}\in\fq^{k+1}$ of the following system of equalities and
non-equalities:
\begin{equation}\label{eq:sistema de las soluciones no deseadas}
H_f (X_1,\ldots, X_{k+1})=0,\ \prod_{1\le i<j\le
k+1}(X_i-X_j)\not=0,\ \prod_{1\le i\le k+1}X_i\not=0.
\end{equation}
%
%---------------------------------------------------------------------
%---------------------------------------------------------------------
%---------------------------------------------------------------------
%---------------------------------------------------------------------
%
\subsection{$H_f$ in terms of the elementary symmetric polynomials}
\label{subsec: planteo del problema}
Fix positive integers $d$ and $k$ such that $d < k$, and consider
the first $d$ elementary symmetric polynomials $\Pi_1 ,\ldots,
\Pi_d$ of $\fq[X_1,\ldots, X_{k+1}]$. For convenience of notation,
we shall denote  $\Pi_0:= 1$. As asserted above, the word
$\bfs{w}_f$ generated by a given polynomial $f$ is not a deep hole
of the standard Reed--Solomon code of dimension $k$ over $\fq$ if
the polynomial $H_f$ of \eqref{eq:polinomio f} has an
$\fq$--rational zero with nonzero, pairwise--distinct coordinates.

Next we show how the polynomials $H_f$ can be expressed in terms of
the elementary symmetric polynomials $\Pi_1 ,\ldots, \Pi_d$. To do
this, we first express the polynomial $H_d$ associated to the
monomial $T^{k+d}$ in terms of $\Pi_1 ,\ldots, \Pi_d$. From this
expression we readily obtain an expression for the polynomial $H_f$
associated to an arbitrary polynomial $f$ as in (\ref{eq:polinomio
f}) of degree $k+d$.
\begin{proposition}[{\cite[Proposition 2.2]{CaMaPr12}}]\label{prop: formula hipersuperficie H_0d}
Let $H_d\in\fq[X_1,\ldots, X_{k+1}]$ be the polynomial associated to
the monomial $T^{k+d}$. Then the following identity holds:
\begin{equation}\label{eq:formula explicita para los HOd}
H_d = \sum_{i_1+2\,i_2+\cdots+ d\,i_d=d} (-1)^{\Delta(i_1,\ldots,
i_d)} \dfrac{(i_1+\cdots+ i_d)!}{i_1!\cdots i_d!}\Pi_1^{i_1}\cdots
          \Pi_d^{i_d},
\end{equation}
where $0\le i_j\le d$ holds for $1\le j\le d$ and
$\Delta(i_1,\ldots, i_d):=i_2+i_4+\cdots+ i_{2\lfloor d/2\rfloor}$
denotes the sum of indices $i_j$ for which $j$ is an even number.
\end{proposition}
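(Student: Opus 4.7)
The plan is to identify $H_d$ as a classical symmetric function and then invoke the standard expansion of that function in elementary symmetric polynomials.

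First, I would observe that by construction $R_{T^{k+d}}(T)\in\cfq[X_1,\ldots,X_{k+1}][T]$ is the unique polynomial of degree at most $k$ in $T$ agreeing with $T^{k+d}$ at $T=X_i$ for $1\le i\le k+1$. Hence $R_{T^{k+d}}$ is the Lagrange interpolation polynomial of $T^{k+d}$ at the $X_i$, and its leading coefficient is the divided difference
$$
H_d \;=\; \sum_{i=1}^{k+1}\frac{X_i^{k+d}}{\prod_{j\ne i}(X_i-X_j)}.
$$

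Second, I would invoke the classical identity (see, e.g., \cite[\S I.2]{Macdonald98}) stating that this divided difference equals the $d$-th complete homogeneous symmetric polynomial $H_d=h_d(X_1,\ldots,X_{k+1})$. A self-contained proof compares the two expansions of $\prod_i(1-X_it)^{-1}$: on the one hand $\sum_{n\ge 0}h_n(X_1,\ldots,X_{k+1})\,t^n$, and on the other hand, by partial fractions, a sum whose $t^d$-coefficient is precisely the divided difference above.

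Third, I would expand $h_d$ in terms of $\Pi_1,\ldots,\Pi_d$ using the generating-function identity
$$
\Bigl(\sum_{j\ge 0}(-1)^j\Pi_j\,t^j\Bigr)\Bigl(\sum_{n\ge 0}h_n\,t^n\Bigr)\;=\;1,
$$
which follows from $\prod_i(1-X_it)\cdot\prod_i(1-X_it)^{-1}=1$. Writing $\sum_n h_n t^n=(1-u)^{-1}$ with $u:=\sum_{j\ge 1}(-1)^{j-1}\Pi_j t^j$ and expanding $\sum_{m\ge 0}u^m$ by the multinomial theorem, the $t^d$-contribution indexed by a tuple $(i_1,\ldots,i_d)$ with $i_1+2i_2+\cdots+di_d=d$ and $m:=i_1+\cdots+i_d$ is
$$
(-1)^{d-m}\,\frac{m!}{i_1!\cdots i_d!}\,\Pi_1^{i_1}\cdots\Pi_d^{i_d}.
$$
To match the sign in the statement, I would note that
$$
d-m \;=\; (i_1+2i_2+\cdots+di_d)-(i_1+\cdots+i_d) \;=\; i_2+2i_3+3i_4+\cdots+(d-1)i_d,
$$
which reduces modulo $2$ to $i_2+i_4+\cdots=\Delta(i_1,\ldots,i_d)$. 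This produces the formula asserted in the proposition.

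The main routine obstacle is the identification of the divided difference with the complete homogeneous symmetric polynomial, but this is classical and admits several short derivations, so no genuine difficulty is expected; the rest is a bookkeeping exercise in generating functions and a parity check of the sign.
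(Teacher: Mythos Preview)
The paper does not prove this proposition; it merely quotes it from \cite[Proposition 2.2]{CaMaPr12}. So there is no ``paper's own proof'' to compare against.

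That said, your argument is correct. The identification of $H_d$ with the leading coefficient of the Lagrange interpolant of $T^{k+d}$ at $X_1,\ldots,X_{k+1}$ is immediate from the congruence $T^{k+d}\equiv R_f\pmod Q$, and the partial-fraction computation you sketch does give
\[
\sum_{i=1}^{k+1}\frac{X_i^{k+d}}{\prod_{j\ne i}(X_i-X_j)}=h_d(X_1,\ldots,X_{k+1}).
\]
The expansion of $h_d$ in elementary symmetric polynomials via $(1-u)^{-1}=\sum_{m\ge 0}u^m$ with $u=\sum_{j\ge 1}(-1)^{j-1}\Pi_j t^j$ is standard, and your parity check $d-m\equiv i_2+i_4+\cdots\pmod 2$ correctly recovers the sign $(-1)^{\Delta(i_1,\ldots,i_d)}$ in the statement. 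No gaps.
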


Finally we have the following expression of the polynomial $H_f\in
\fq[X_1,\ldots,X_{k+1}]$ associated to an arbitrary $f\in\fq[T]$ of
degree $k+d$ in terms of the polynomials $H_d$.
\begin{proposition}[{\cite[Proposition 2.3]{CaMaPr12}}]
\label{prop: formula para Hf}
Let $f:=T^{k+d}+f_{d-1}T^{k+d-1}+\cdots+f_0T^k$ be a polynomial of
$\fq[T]$ and let $H_f\in\fq[X_1,\ldots,X_{k+1}]$ be the polynomial
associated to $f$. Then the following identity holds:
\begin{equation} \label{eq:formula recursiva para H_d}
H_f=H_d+f_{d-1}H_{d-1}+\cdots+f_1H_1+f_0.
\end{equation}
\end{proposition}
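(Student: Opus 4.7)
The plan is to deduce the formula from the $\fq$-linearity of the assignment $f \mapsto H_f$. Recall that, by \eqref{eq:congruencia}, $R_f \in \fq[X_1,\ldots,X_{k+1}][T]$ is the unique polynomial of degree at most $k$ satisfying $f \equiv R_f \pmod{Q}$, where $Q=(T-X_1)\cdots(T-X_{k+1})$, and $H_f$ is the coefficient of $T^k$ in $R_f$. Since $Q$ is a monic polynomial in $T$, the division-with-remainder map $f \mapsto R_f$ is $\fq[X_1,\ldots,X_{k+1}]$-linear in $f$. Composition with the (linear) operation of taking the coefficient of $T^k$ shows that $f \mapsto H_f$ is an $\fq$-linear map from $\fq[T]$ to $\fq[X_1,\ldots,X_{k+1}]$.

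Next, I would compute $H_{T^i}$ on monomials. For $0 \le i \le k-1$, the polynomial $T^i$ already has degree less than $k+1=\deg Q$, so $R_{T^i}=T^i$, and hence $H_{T^i}=0$. For $i=k$, the same reasoning gives $R_{T^k}=T^k$, so $H_{T^k}=1$. Finally, for $i = k+j$ with $1 \le j \le d$, the polynomial $H_j$ is, by definition, precisely the coefficient of $T^k$ in the reduction $R_{T^{k+j}}$, that is, $H_{T^{k+j}}=H_j$.

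Writing $f = T^{k+d} + f_{d-1}T^{k+d-1} + \cdots + f_1 T^{k+1} + f_0 T^k$ (with no monomials of degree $<k$) and applying linearity, I obtain
\begin{equation*}
H_f \;=\; H_{T^{k+d}} + f_{d-1} H_{T^{k+d-1}} + \cdots + f_1 H_{T^{k+1}} + f_0 H_{T^k}
\;=\; H_d + f_{d-1}H_{d-1} + \cdots + f_1 H_1 + f_0,
\end{equation*}
which is exactly \eqref{eq:formula recursiva para H_d}. There is no real obstacle here: the argument is purely formal, and the only point that deserves care is observing that $Q$ being monic in $T$ guarantees that the remainder map is well-defined and linear over the coefficient ring $\fq[X_1,\ldots,X_{k+1}]$ (and a fortiori over $\fq$), so that the reduction of a sum of monomials is the sum of the reductions.
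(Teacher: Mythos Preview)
Your argument is correct: linearity of the remainder map (valid because $Q$ is monic in $T$ over the coefficient ring $\fq[X_1,\ldots,X_{k+1}]$), followed by extraction of the $T^k$-coefficient, reduces the identity to the monomial case, which is immediate from the definitions. The only cosmetic remark is that the paper defines $H_f$ as the \emph{leading} coefficient of $R_f$ under the assumption $\deg R_f = k$; you are implicitly using the natural extension ``coefficient of $T^k$ in $R_f$'', which agrees with the intended definition and makes the linearity argument go through cleanly.

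As for comparison: the paper does not supply its own proof of this proposition here --- it simply cites \cite[Proposition~2.3]{CaMaPr12}. Your linearity argument is presumably what any proof of this identity amounts to, so there is nothing substantive to contrast.
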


\begin{remark}\label{rem: degree of Hd and H_f}
From Proposition \ref{prop: formula hipersuperficie H_0d} we easily
conclude that $H_d$ is a homogeneous polynomial of
$\fq[X_1,\ldots,X_{k+1}]$ of degree $d$ that can be expressed as a
polynomial in the elementary symmetric polynomials
$\Pi_1,\ldots,\Pi_d$. Further, $H_d$ is a monic element of
$\fq[\Pi_1,\ldots,\Pi_{d-1}][\Pi_d]$ of degree one in $\Pi_d$.
Combining these remarks and Proposition \ref{prop: formula para Hf}
we see that, for an arbitrary
$f:=T^{k+d}+f_{d-1}T^{k+d-1}+\cdots+f_0T^k\in\fq[T]$, the
corresponding polynomial $H_f\in \fq[X_1,\ldots,X_{k+1}]$ has degree
$d$ and is also a monic element of
$\fq[\Pi_1,\ldots,\Pi_{d-1}][\Pi_d]$ of degree one in $\Pi_d$.
 \end{remark}
%
%---------------------------------------------------------------------
%---------------------------------------------------------------------
%---------------------------------------------------------------------
%---------------------------------------------------------------------
%
\subsection{Estimates on the number of $\fq$-rational points and existence of deep holes}
Fix positive integers $d$ and $k$ such that $d < k$ and $q-1>k+d$,
and a polynomial $f:=T^{k+d}+f_{d-1}T^{k+d-1}+\cdots+ f_0T^k
\in\fq[T]$. Consider the hypersurface $V_f\subset\A^{k+1}$ defined
by the polynomial $H_f\in \fq[X_1,\ldots X_{k+1}]$ associated to
$f$. We shall establish a lower bound for the number of
$\fq$--rational points of $V_f$ without zero coordinates, and having
all the coordinates pairwise distinct. This will allow us to obtain
conditions on $q$, $d$ and $k$ which imply the nonexistence of deep
holes of the standard Reed--Solomon code $C$. For this purpose, we
show that the polynomial $H_f$ satisfies assumptions $({\sf A}_1)$
and $({\sf A}_2)$ of Section \ref{subsec: singular locus of V}.
\begin{lemma}\label{lemma: H_f satisfies A_1 and A_2}
The polynomial $H_f$ of \eqref{eq:polinomio f} satisfies assumptions
$({\sf A}_1)$--$({\sf A}_2)$ of Section \ref{subsec: singular locus
of V}.
\end{lemma}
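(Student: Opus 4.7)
My plan is to extract from Propositions \ref{prop: formula hipersuperficie H_0d}--\ref{prop: formula para Hf} a very precise structural description of the polynomial $G$ for which $H_f = G(\Pi_1,\ldots,\Pi_d)$, and then read off both assumptions from one elementary observation about its partial derivative with respect to $Z_d$.

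Concretely, I would let $\tilde H_j \in \fq[Z_1,\ldots,Z_j]$ be the polynomial such that $H_j = \tilde H_j(\Pi_1,\ldots,\Pi_j)$, so that Proposition \ref{prop: formula para Hf} yields
\begin{equation*}
G(Z_1,\ldots,Z_d) \;=\; \tilde H_d + f_{d-1}\,\tilde H_{d-1} + \cdots + f_1\,\tilde H_1 + f_0.
\end{equation*}
The explicit formula of Proposition \ref{prop: formula hipersuperficie H_0d} shows that the only multi-index $(i_1,\ldots,i_d)$ satisfying $i_1 + 2 i_2 + \cdots + d\, i_d = d$ and $i_d \neq 0$ is $(0,\ldots,0,1)$, so $\tilde H_d = \epsilon\, Z_d + r(Z_1,\ldots,Z_{d-1})$ for some $\epsilon \in \{+1,-1\}$ and some $r \in \fq[Z_1,\ldots,Z_{d-1}]$. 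Since the remaining summands $\tilde H_j$ with $j<d$ involve only $Z_1,\ldots,Z_j$, it follows that
\begin{equation*}
G \;=\; \epsilon\, Z_d + p(Z_1,\ldots,Z_{d-1})
\end{equation*}
for a suitable $p \in \fq[Z_1,\ldots,Z_{d-1}]$.

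Assumption $({\sf A}_1)$ is then immediate: the $1\times d$ Jacobian $JG$ contains the entry $\partial G/\partial Z_d = \epsilon$, a nonzero constant, so it has full rank at every point of $\A^d$, in particular at every point of $V(G)$; moreover $V(G)\neq\emptyset$ since, for any $(z_1,\ldots,z_{d-1})\in\cfq^{d-1}$, the equation $G(z_1,\ldots,z_{d-1},z_d)=0$ can be solved for $z_d$.

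For $({\sf A}_2)$ I would observe that the substitution $Z_j\mapsto \Pi_j$ converts the weight $\wt(Z_j):=j$ into total $X$-degree; since each $H_j$ is homogeneous of degree $j$ in $X_1,\ldots,X_{k+1}$, the polynomial $\tilde H_j$ is weighted-homogeneous of weighted degree $j$. The terms of maximal weight in $G$ therefore come exactly from $\tilde H_d$, so $G^{\wt}=\tilde H_d = \epsilon\,Z_d + r(Z_1,\ldots,Z_{d-1})$, and the very same derivative computation gives $\partial G^{\wt}/\partial Z_d = \epsilon\neq 0$.

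There is no serious obstacle here: the entire argument reduces to recognizing, from the combinatorics of Proposition \ref{prop: formula hipersuperficie H_0d} together with Remark \ref{rem: degree of Hd and H_f}, that $Z_d$ appears in $G$ only linearly and with a unit constant coefficient, which trivializes both full-rank conditions.
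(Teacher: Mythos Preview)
Your proposal is correct and follows essentially the same approach as the paper: both arguments observe (via Propositions \ref{prop: formula hipersuperficie H_0d}--\ref{prop: formula para Hf} and Remark \ref{rem: degree of Hd and H_f}) that $G$ and $G^{\wt}=\tilde H_d$ are degree-one in $Z_d$ with unit constant coefficient, so the last entry of the gradient is a nonzero constant and the full-rank conditions in $({\sf A}_1)$--$({\sf A}_2)$ hold trivially at every point. Your write-up is in fact slightly more careful than the paper's in tracking the sign $\epsilon\in\{\pm 1\}$ and in noting nonemptiness of $V(G)$.
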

\begin{proof}
Combining Proposition \ref{prop: formula para Hf} and Remark
\ref{rem: degree of Hd and H_f} we see that the homogeneous
component of highest degree of $H_f$ is $H_d$. According to Remark
\ref{rem: degree of Hd and H_f}, we may express $H_d$ as
$H_d=G_d(\Pi_1,\ldots,\Pi_d)$, where $G_d\in\fq[Z_1,\ldots,Z_d]$ is
of degree one in $Z_d$, and monic in $Z_d$. This implies that the
gradient $\nabla G_d$ of $G_d$ is of the form $\nabla G_d=(\partial
G_d/\partial Z_1,\ldots,\partial G_d/\partial Z_{d-1},1)$. In
particular, $\nabla G_d(\bfs z)\not=\bfs 0$ for any $\bfs z\in\A^d$,
which shows that $H_d$ satisfies assumption $({\sf A}_1)$, and thus
$H_f$ satisfies assumption $({\sf A}_2)$.

Further, let $G_f\in\fq[Z_1,\ldots,Z_d]$ be such that
$H_f=G_f(\Pi_1,\ldots,\Pi_d)$. From Remark \ref{rem: degree of Hd
and H_f} we conclude that  the gradient $\nabla G_f$ is also of the
form $\nabla G_f=(\partial G_f/\partial Z_1,\ldots,\partial
G_f/\partial Z_{d-1},1)$. It follows that $\nabla G_f(\bfs
z)\not=\bfs 0$ for any $\bfs z\in\A^d$, which implies that $H_f$
satisfies assumption $({\sf A}_1)$.
\end{proof}

The first step towards our main result is a lower bound on the
number of $\fq$--rational points of the hypersurface $V_f$. Lemma
\ref{lemma: H_f satisfies A_1 and A_2} shows that $H_f$ satisfies
the hypotheses of Theorem \ref{th: estimate |V_F| without Pi_m}. As
a consequence, we obtain the following result.
\begin{lemma}\label{lemma: nmb points V_f}
Suppose that $\mathrm{char}(\fq)$ does not divide $(k+1)k\cdots
(d-1)$. Then we have
$$
\big||V_f(\fq)|-q^k\big|\leq
q^{\frac{k+d-1}{2}}(1+q^{-1})\big((d-1)^{k-d+2}+
6(d+2)^{k+2}q^{-\frac{1}{2}}\big).$$
\end{lemma}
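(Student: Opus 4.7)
The plan is to apply Theorem \ref{th: estimate |V_F| without Pi_m} to the hypersurface $V_f\subset\A^{k+1}$. All the hypotheses are already at hand: Remark \ref{rem: degree of Hd and H_f} tells us that $H_f$ is a polynomial of degree $d$ in the $k+1$ variables $X_1,\ldots,X_{k+1}$ that can be written in the form $H_f=G_f(\Pi_1,\ldots,\Pi_d)$, and Lemma \ref{lemma: H_f satisfies A_1 and A_2} provides assumptions $({\sf A_1})$ and $({\sf A_2})$ for $H_f$. Thus $H_f$ fits the setting of Theorem \ref{th: estimate |V_F| without Pi_m} after the parameter translation $m \mapsto k+1$ (ambient dimension) and $k \mapsto k-d+2$ (the ``sparsity'' index, so that $H_f$ does not depend on $\Pi_{(k+1)-(k-d+2)+2},\ldots,\Pi_{k+1}$, i.e.\ on $\Pi_{d+1},\ldots,\Pi_{k+1}$).

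With this translation the characteristic hypothesis of Theorem \ref{th: estimate |V_F| without Pi_m}, namely that $\mathrm{char}(\fq)$ does not divide $m(m-1)\cdots(m-k)$, becomes precisely that $\mathrm{char}(\fq)$ does not divide $(k+1)k\cdots(d-1)$, which is exactly what we assume. Moreover, from $d<k$ we have $k-d+2\ge 3$ and $k-d+2<k=(k+1)-1$, so we fall into the range $2\le k<m-1$ corresponding to the third case of Theorem \ref{th: estimate |V_F| without Pi_m}.

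Plugging in, the ``main term'' is $q^{m-1}=q^k$, the exponent becomes
\[
m-\tfrac{k+1}{2}\;=\;(k+1)-\tfrac{(k-d+2)+1}{2}\;=\;\tfrac{k+d-1}{2},
\]
and the two bracketed quantities transform as $(d-1)^{k}\mapsto (d-1)^{k-d+2}$ and $(d+2)^{m+1}\mapsto (d+2)^{k+2}$. Collecting these substitutions yields exactly
\[
\bigl||V_f(\fq)|-q^{k}\bigr|\;\le\; q^{\frac{k+d-1}{2}}(1+q^{-1})\bigl((d-1)^{k-d+2}+6(d+2)^{k+2}q^{-1/2}\bigr),
\]
which is the claimed inequality. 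There is no real obstacle: all the geometric work (the complete intersection property of $\mathrm{pcl}(V_f)$, the codimension of its singular locus, and the estimates coming from \cite{GhLa02a}) is already packaged inside Theorem \ref{th: estimate |V_F| without Pi_m}, and the only thing to check carefully is the bookkeeping of the two parameters $m$ and $k$ in the statement of that theorem against the data $(k+1,\,k-d+2,\,d)$ of the present situation.
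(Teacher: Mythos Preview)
Your proof is correct and follows exactly the paper's approach: apply Theorem \ref{th: estimate |V_F| without Pi_m} with the parameter substitution $m\mapsto k+1$ and sparsity index $k-d+2$, which is precisely what the paper does (in one sentence). One small slip: the inequality $k-d+2<k$ is equivalent to $d>2$, not to $d<k$ as you write; the range condition for the third case of Theorem \ref{th: estimate |V_F| without Pi_m} thus needs $d\ge 3$, which is implicit in the surrounding context (see Theorem \ref{th: deep holes - main}) but not a consequence of $d<k$ alone.
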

\begin{proof}
Observe that $H_f$ does not depend on $\Pi_{d+1},\ldots,\Pi_{k+1}$.
Therefore, the statement follows readily from Theorem \ref{th:
estimate |V_F| without Pi_m}.
\end{proof}

Next we obtain an upper bound on the number of $\fq$--rational
points of the hypersurface $V_f$ which are not useful in connection
with the existence of deep holes, namely those with a zero
coordinate or at least two equal coordinates. We start with the
first of these cases.
\begin{lemma} \label{lemma: zeros with a zero coordinate}
With hypotheses as in Lemma \ref{lemma: nmb points V_f}, let $N_1$
be the number of $\fq$--rational points of $V_f$ with a zero
coordinate, namely
$$N_1:=\Bigg|\bigcup_{i=1}^{k+1}V_f(\fq)\cap\{X_i=0\}\Bigg|.$$
Then
$$
N_1 \le
(k+1)\left(q^{k-1}+q^{\frac{k+d-2}{2}}(1+q^{-1})\big((d-1)^{k-d+1}
+6(d+2)^{k+1}q^{-\frac{1}{2}}\big)\right).
$$
\end{lemma}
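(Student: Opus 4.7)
The plan is to reduce $N_1$ to the count of $\fq$-rational points of a single sparse hypersurface in one fewer variable and then invoke Theorem~\ref{th: estimate |V_F| without Pi_m}. Since $H_f\in\fq[X_1,\ldots,X_{k+1}]$ is symmetric, the sets $V_f(\fq)\cap\{X_i=0\}$ all have the same cardinality as $i$ ranges over $1,\ldots,k+1$. Applying the union bound yields
$$N_1\le (k+1)\,\big|V_f(\fq)\cap\{X_{k+1}=0\}\big|,$$
so it suffices to bound the right-hand factor.

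Next I would identify $V_f\cap\{X_{k+1}=0\}$ with an affine hypersurface $\widetilde V_f\subset\A^k$ via the obvious projection forgetting the last coordinate. Writing $\tilde\Pi_j$ for the $j$th elementary symmetric polynomial in $X_1,\ldots,X_k$, we have $\Pi_j(X_1,\ldots,X_k,0)=\tilde\Pi_j$ for $1\le j\le d$, because $d<k$. Therefore, using the expression $H_f=G_f(\Pi_1,\ldots,\Pi_d)$ from the proof of Lemma~\ref{lemma: H_f satisfies A_1 and A_2}, we obtain
$$\widetilde H_f(X_1,\ldots,X_k):=H_f(X_1,\ldots,X_k,0)=G_f(\tilde\Pi_1,\ldots,\tilde\Pi_d).$$
The hypotheses $({\sf A}_1)$ and $({\sf A}_2)$ are conditions on $G_f$ and its highest-weight component alone, independent of the ambient number of variables, so $\widetilde H_f$ automatically inherits them from $H_f$ as established in Lemma~\ref{lemma: H_f satisfies A_1 and A_2}.

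I then apply Theorem~\ref{th: estimate |V_F| without Pi_m} to $\widetilde H_f\in\fq[X_1,\ldots,X_k]$ with the parameter choice $m_{\mathrm{new}}:=k$ and $k_{\mathrm{new}}:=k-d+1$, so that $\widetilde H_f=G_f(\tilde\Pi_1,\ldots,\tilde\Pi_{m_{\mathrm{new}}-k_{\mathrm{new}}+1})$ matches the template \eqref{def: f_1}. The characteristic condition of the theorem reads $\mathrm{char}(\fq)\nmid k(k-1)\cdots(d-1)$, which is implied by the standing hypothesis $\mathrm{char}(\fq)\nmid (k+1)k\cdots(d-1)$. Invoking the general-$k$ branch of the theorem with $d_{\mathrm{new}}=d$ yields
$$\big||\widetilde V_f(\fq)|-q^{k-1}\big|\le q^{\,k-\frac{k_{\mathrm{new}}+1}{2}}(1+q^{-1})\bigl((d-1)^{k_{\mathrm{new}}}+6(d+2)^{k+1}q^{-1/2}\bigr),$$
and a direct computation $k-\tfrac{k_{\mathrm{new}}+1}{2}=\tfrac{k+d-2}{2}$ shows this to be exactly the bracketed expression appearing in the lemma. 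Combining with the union-bound inequality and the trivial estimate $|\widetilde V_f(\fq)|\le q^{k-1}+\bigl||\widetilde V_f(\fq)|-q^{k-1}\bigr|$ then gives the stated inequality on $N_1$.

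The plan is essentially mechanical; the only genuine check is the first one, namely that the reduction $X_{k+1}=0$ leaves the representation $H_f=G_f(\Pi_1,\ldots,\Pi_d)$ unchanged in form and preserves both smoothness hypotheses, which is immediate because $d<k$ guarantees that no elementary symmetric polynomial involved in $G_f$ is affected by the specialization beyond renaming. A minor boundary issue is that the ``general $k$'' case of Theorem~\ref{th: estimate |V_F| without Pi_m} formally requires $k_{\mathrm{new}}<m_{\mathrm{new}}-1$, i.e.\ $d\ge 3$; when $d=2$ one instead uses the $k_{\mathrm{new}}=2$ branch of the same theorem, whose bound is majorized term-by-term by the expression stated in the lemma, so the inequality remains valid.
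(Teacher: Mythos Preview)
Your proof is correct and follows essentially the same route as the paper: reduce to the hypersurface $G_f(\tilde\Pi_1,\ldots,\tilde\Pi_d)$ in $\A^k$, observe that $({\sf A}_1)$ and $({\sf A}_2)$ transfer verbatim, and apply Theorem~\ref{th: estimate |V_F| without Pi_m} with $m_{\rm new}=k$, $k_{\rm new}=k-d+1$. Your final aside on $d=2$ is slightly off (for $d=2$ one has $k_{\rm new}=k-1$, not $2$), but this case is outside the scope anyway, since Lemma~\ref{lemma: nmb points V_f} itself already relies on the general branch of Theorem~\ref{th: estimate |V_F| without Pi_m} and hence implicitly requires $d\ge 3$, as is made explicit in Theorem~\ref{th: deep holes - main}.
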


\begin{proof}
Let $\bfs{x}:=(x_1,\ldots, x_{k+1})$ be a point of $V_f$ with a zero
coordinate. Assume without loss of generality that $x_{k+1}=0$. Then
$\bfs{x}$ is an $\fq$--rational point of the intersection
$W_{k+1}:=V_f\cap \{X_{k+1}=0\}$. Observe that $W_{k+1}$ is the
$\fq$--hypersurface of the linear space $\{X_{k+1}=0\}$ defined by
the polynomial $G_f(\Pi_{1}^{k},\ldots,\Pi_{d}^{k})$, where
$\Pi_{1}^{k},\ldots,\Pi_{d}^{k}$ are the first $d$ elementary
symmetric polynomials of $\fq[X_1,\ldots, X_{k}]$. As a consequence,
by Theorem \ref{th: estimate |V_F| without Pi_m} we obtain
$$
| |W_{k+1}(\fq)|-q^{k-1}| \le
q^{\frac{k+d-2}{2}}(1+q^{-1})\big((d-1)^{k-d+1} +
6(d+2)^{k+1}q^{-\frac{1}{2}}\big).
$$
Adding the upper bounds of the number of $\fq$--rational points of
the varieties $W_i:=V_f\cap\{X_i=0\}$ for $1\le i\le k+1$, the lemma
follows.
\end{proof}

Next we consider the number of $\fq$--rational points of $V_f$ with
two equal coordinates.
\begin{proposition} \label{prop:zeros with two equal coordinates}
With hypotheses as in Lemma \ref{lemma: nmb points V_f}, let $N_2$
be the number of $\fq$--rational points of $V_f$ with at least two
equal coordinates, namely
$$N_2:=\bigg|\bigcup_{1\le i<j\le k+1}V_f(\fq)\cap\{X_i=X_j\}\bigg|.$$
We have
$$
N_2 \leq
\frac{(k+1)k}{2}\Big(q^{k-1}+q^{\frac{k+d-2}{2}}(1+q^{-1})\big((d-1)^{k-d+1}
+6(d+2)^{k+1}q^{-\frac{1}{2}}\big)\Big).
$$
\end{proposition}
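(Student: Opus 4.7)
The plan is to bound $N_2$ via the union bound $N_2 \le \sum_{1 \le i < j \le k+1} |V_f(\fq) \cap \{X_i = X_j\}|$. Since $H_f$ is symmetric in $X_1, \ldots, X_{k+1}$, coordinate transpositions induce $\fq$-isomorphisms between the slices $V_f \cap \{X_i = X_j\}$, so each of the $\binom{k+1}{2}$ slices has the same cardinality. It therefore suffices to bound the single slice at $(i,j) = (k, k+1)$ and multiply the resulting estimate by $(k+1)k/2$.

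Projecting the hyperplane $\{X_k = X_{k+1}\}$ onto its first $k$ coordinates, I identify it with $\A^k$ and realize $V_f \cap \{X_k = X_{k+1}\}$ as the affine hypersurface $W \subset \A^k$ cut out by $\tilde H_f(Y_1, \ldots, Y_k) := H_f(Y_1, \ldots, Y_{k-1}, Y_k, Y_k)$. Writing $H_f = G_f(\Pi_1, \ldots, \Pi_d)$ as in Remark \ref{rem: degree of Hd and H_f}, I obtain $\tilde H_f = G_f(P_1, \ldots, P_d)$, where $P_l(\bfs Y) := \Pi_l(Y_1, \ldots, Y_{k-1}, Y_k, Y_k)$. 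Although $P_1, \ldots, P_d$ are not elementary symmetric in $Y_1, \ldots, Y_k$, they arise as the first $d$ components of the composition of the elementary symmetric polynomials of $k+1$ variables with the parametrization $L_{\mathcal I}: \A^k \to \A^{k+1}$ attached to the partition $\mathcal{I} := \{\{1\}, \ldots, \{k-1\}, \{k, k+1\}\}$, and Proposition \ref{prop: Phi_i is a finite morphism} asserts that the corresponding map $\bfs\Pi_{k, \mathcal I}: \A^k \to \A^k$ is finite.

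Combining this finiteness with Lemma \ref{lemma: H_f satisfies A_1 and A_2} and adapting the arguments of Theorems \ref{th: singular locus for F_s without Pi(m-1), Pi(m)} and \ref{th: sing locus V(F) without Pi_m} together with Corollary \ref{coro: singular locus at infinity without Pi_m}, I would show that $W$ is a hypersurface of $\A^k$ of degree $d$ whose projective closure is a complete intersection regular in codimension $k - d$. Applying the bound \eqref{eq: estimate var regular codim k-1 GhLa} to both the projective closure of $W$ and its locus at infinity, exactly as in the proof of Theorem \ref{th: estimate |V_F| without Pi_m}, gives
$$\big||W(\fq)| - q^{k-1}\big| \le q^{(k+d-2)/2}(1+q^{-1})\big((d-1)^{k-d+1} + 6(d+2)^{k+1} q^{-1/2}\big),$$
and multiplying this per-pair bound by $(k+1)k/2$ produces the claimed inequality.

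The main obstacle is verifying the regularity in codimension $k-d$ of the projective closure of $W$ in the partially symmetric setting. Whereas Section \ref{section: hypersurfaces not depending on Pi_m} exploits full symmetry of the hypersurface in all ambient variables, $\tilde H_f$ is symmetric only in $Y_1, \ldots, Y_{k-1}$, with $Y_k$ distinguished by its double occurrence; tracking how the singular-locus analysis of Theorem \ref{th: sing locus V(F) without Pi_m} behaves under this distinguished variable, and leveraging the finiteness of $\bfs\Pi_{k,\mathcal I}$ to transfer dimension estimates through $\bfs P$, is the core technical step.
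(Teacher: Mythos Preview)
Your proposal is correct and takes essentially the same approach as the paper: reduce by symmetry and the union bound to a single slice $W_{k,k+1}$, recognize it as the hypersurface defined by $G_f(\Pi_1^*,\ldots,\Pi_d^*)$, adapt the singular-locus machinery of Sections \ref{section: singular locus F_s not depending on Pi_(m-1),Pi_m}--\ref{section: hypersurfaces not depending on Pi_m}, and apply the estimate of Theorem \ref{th: estimate |V_F| without Pi_m}. The paper resolves your identified ``main obstacle'' via the identity $\Pi_i^*=\Pi_i^{k-1}+2X_k\Pi_{i-1}^{k-1}+X_k^2\Pi_{i-2}^{k-1}$, from which it argues that the maximal minors of $(\partial\Pi_i^*/\partial X_j)_{1\le i\le d,\,1\le j\le k}$ coincide up to nonzero constants with those of $(\partial\Pi_i^k/\partial X_j)$, so the rank-deficiency locus is exactly that of the fully symmetric case in $k$ variables and Theorems \ref{th: singular locus for F_s without Pi(m-1), Pi(m)}, \ref{th: pcl(V) is normal abs irred} and \ref{th: pcl(V_F) is a normal hypersurface} apply directly rather than via a separate transfer through the finiteness of $\bfs\Pi_{k,\mathcal I}$.
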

\begin{proof}
Let $\bfs{x}:=(x_1,\ldots, x_{k+1})\in V_f(\fq)$ be a point having
two distinct coordinates with the same value. Without loss of
generality we may assume that $x_k = x_{k+1}$ holds. Then $\bfs{x}$
is an $\fq$--rational point of the hypersurface $W_{k,k+1}\subset
\{X_k=X_{k+1}\}$ defined by the polynomial
$G_f(\Pi_{1}^*,\ldots,\Pi_{d}^*)\in \fq[X_1,\ldots,X_k]$, where
$\Pi_i^*:=\Pi_{i}(X_1,\ldots, X_k,X_k)$ is the polynomial of
$\fq[X_1,\ldots, X_k]$ obtained by substituting $X_k$ for $X_{k+1}$
in the $i$th elementary symmetric polynomial of $\fq[X_1,\ldots,
X_{k+1}]$. Observe that
\begin{equation}\label{eq:pi_i en x_k igual x_(k+1)}
      \Pi_{i}^* = \Pi_i^{k-1} + 2X_{k}\cdot \Pi_{i-1}^{k-1} + X_k^2 \cdot \Pi_{i-2}^{k-1}
\end{equation}
where $\Pi_j^l$ denotes the $j$th elementary symmetric polynomial of
$\fq[X_1,\ldots, X_l]$ for $1\le j\le d$ and $1\le l\le k+1$.

We claim that the singular locus of $\mathrm{pcl}(W_{k,k+1})$ and
the singular locus of $W_{k,k+1}$ at infinity have dimension at most
$d-1$ and $d-2$ respectively. Using \eqref{eq:pi_i en x_k igual
x_(k+1)} it can be proved that all the maximal minors of the
Jacobian matrix $(\partial\Pi^*_i/\partial X_j)_{1\le i\le d,1\le
j\le k}$ are equal, up to multiplication by nonzero constants, to
the corresponding minors of the Jacobian matrix
$(\partial\Pi^k_i/\partial X_j)_{1\le i\le d,1\le j\le k}$. Then the
proofs of Theorems \ref{th: singular locus for F_s without Pi(m-1),
Pi(m)}, \ref{th: pcl(V) is normal abs irred} and \ref{th: pcl(V_F)
is a normal hypersurface} go through with minor corrections and show
our claim. As a consequence, Theorem \ref{th: estimate |V_F| without
Pi_m} shows that
$$
|W_{k,k+1}(\fq)|\le  q^{k-1}+
q^{\frac{k+d-2}{2}}(1+q^{-1})\big((d-1)^{k-d+1}+
6(d+2)^{k+1}q^{-\frac{1}{2}}\big).
$$
We readily deduce the statement of the proposition.
\end{proof}
%
% ----------------------------------------------------------------
% ----------------------------------------------------------------
%
\subsubsection{Results of nonexistence of deep holes}
Now we are ready to prove the main results of this section. Fix $q$,
$k$ and $d\ge 3$ with $q-1>k+d$ and consider the standard
Reed--Solomon code $C$ of dimension $k$ over $\fq$. According to
\eqref{eq:sistema de las soluciones no deseadas}, a polynomial
$f:=T^{k+d}+f_{d-1}T^{k+d-1}+\cdots+f_{0}T^k$ does not generate a
deep hole of the code $C$ if and only if the corresponding
hypersurface $V_f\subset\A^{k+1}$ has an $\fq$--rational point with
nonzero, pairwise--distinct coordinates. Combining Lemmas
\ref{lemma: nmb points V_f} and \ref{lemma: zeros with a zero
coordinate} and Proposition \ref{prop:zeros with two equal
coordinates} we conclude that the number $N$ of such points
satisfies the following inequality:
\begin{equation}\label{eq:lower bound useful points}
  \begin{split}
  N\ge  q^k-\dfrac{(k+1)(k+2)}{2}q^{k-1}-(1+q^{-1})(d-1)^{k-d+1}q^{\frac{k+d-1}{2}}\bigg(d-1+
    \dfrac{(k+1)(k+2)}{2q^{\frac{1}{2}}}\bigg) \\
    -6(1+q^{-1})(d+2)^{k+1}q^{\frac{k+d-2}{2}}\bigg(d+2+
    \dfrac{(k+1)(k+2)}{2q^{\frac{1}{2}}}\bigg).
  \end{split}
\end{equation}
Therefore, the polynomial $f$ does not generate a deep hole of the
code $C$ if the right--hand side of (\ref{eq:lower bound useful
points}) is a positive number.

Suppose that $q$, $k$ and $d\ge 3$ satisfy the following conditions:
 \begin{equation}\label{eq:first condition on q}
   q>(k+1)^2,\quad k>3d.
 \end{equation}
Since $k\geq 10$,  it follows that $\frac{3}{4}(k+1)(k+2)\leq
(k+1)^{2}<q$ holds. Therefore, we have
$q-\frac{1}{2}(k+1)(k+2)>{q}/{3}$, which implies
  $$
    q^k-\dfrac{(k+1)(k+2)}{2}q^{k-1}=q^{k-1}\bigg(q-\dfrac{(k+1)(k+2)}{2}\bigg)> \frac{q^k}{3}.
  $$
Hence, the right--hand side of (\ref{eq:lower bound useful points})
is positive if the following condition holds:
\begin{align}
 \label{eq: first inequality of the proof of lower bound useful points}
\dfrac{q^k}{3}  & \geq
(1+q^{-1})(d-1)^{k-d+1}q^{\frac{k+d-1}{2}}\bigg(d-1+
\dfrac{(k+1)(k+2)}{2q^{\frac{1}{2}}}\bigg) \\
&\quad +\, 6(1+q^{-1})(d+2)^{k+1}q^{\frac{k+d-2}{2}}\bigg(d+2+
\dfrac{(k+1)(k+2)}{2q^{\frac{1}{2}}}\bigg).\notag
\end{align}
Since  $k+1<q^{\frac{1}{2}}$, we conclude that \eqref{eq: first
inequality of the proof of lower bound useful points} can be
replaced by the condition
\begin{align*}
\dfrac{q^k}{3}\ge&(1+q^{-1})(d-1)^{k-d+1}q^{\frac{k+d-1}{2}}\left(d-1+\frac{k+2}{2}\right)\\
&+ 6(1+q^{-1})(d+2)^{k+1}q^{\frac{k+d-2}{2}}
\left(d+2+\frac{k+2}{2}\right).
\end{align*}
As $d\leq\frac{k-1}{3}$, we obtain $d+2+\frac{k+2}{2}\leq k+1$. We
conclude that the right--hand side of \eqref{eq:lower bound useful
points} is positive if
$$
\dfrac{q^k}{3}\ge
(1+q^{-1})(d-1)^{k-d+1}(k-2)q^{\frac{k+d-1}{2}}+6(1+q^{-1})(d+2)^{k+1}(k+1)q^{\frac{k+d-2}{2}},
$$
or equivalently, if
$$q^k\ge 3(1+q^{-1})(d-1)^{k-d+1}(k-2)q^{\frac{k+d-1}{2}}
+18(1+q^{-1})(d+2)^{k+1}(k+1)q^{\frac{k+d-2}{2}},
$$
holds. Furthermore, this condition is in turn implied by the
following conditions:
$$\dfrac{q^k}{8}\ge 3(1+q^{-1})(d-1)^{k-d+1}(k-2)q^{\frac{k+d-1}{2}},\quad
\dfrac{7 q^k}{8}\ge 18(1+q^{-1})(d+2)^{k+1}(k+1)
q^{\frac{k+d-2}{2}},$$
which can be rewritten as
\begin{equation}\label{eq:first condition on q bis}
q^k\ge 24(1+q^{-1})(d-1)^{k-d+1}(k-2)q^{\frac{k+d-1}{2}},\  q^k\ge
\frac{144}{7}(1+q^{-1})(d+2)^{k+1}(k+1)q^{\frac{k+d-2}{2}}.
\end{equation}
The first inequality is implied by the following one:
$$q\ge (25(k-2))^{\frac{2}{k-d+1}}(d-1)^2.$$
From \eqref{eq:first condition on q} one easily concludes that
$3(k-d+1)\ge 2k+4$ holds. Since the function $k\mapsto
\big(25(k-2)\big){}^{3/(k+2)}$ is decreasing, taking into account
that $k\ge 10$ holds we deduce that a sufficient condition for the
fulfillment of the inequality above is
\begin{equation}\label{eq:first part first condition}
     q>4d^2.
\end{equation}
Next we consider the second inequality of (\ref{eq:first condition
on q bis}). First, we observe that (\ref{eq:first condition on q
bis}) holds if
\begin{equation}\label{eq:second part first condition inexplicit}
q>(21(k+1))^{\frac{2}{k-d+2}}\Big(\frac{d+2}{d}\Big)^{2+\frac{2d-2}{k-d+2}}d^{\,2+\frac{2d-2}{k-d+2}}.
\end{equation}
From \eqref{eq:first condition on q} we deduce $3(k-d+2)\ge 2k+7$.
Taking into account that the function $k\mapsto
\big(21(k+1)\big){}^{6/(2k+7)}$ is decreasing, as $k \geq 10$  we
see that \eqref{eq:second part first condition inexplicit} is
satisfied if the following condition holds:
\begin{equation}\label{eq:second part first condition}
q>12\,d^{\,2+\frac{2d-2}{k-d+2}}.
\end{equation}
Combining (\ref{eq:first condition on q}), (\ref{eq:first part first
condition}) and (\ref{eq:second part first condition}) we deduce the
following sufficient condition for the nonexistence of deep holes.
\begin{theorem}\label{th: deep holes - main}
Suppose that $\mathrm{char}(\fq)$ does not divide $(k+1)k\cdots
(d-1)$. Let  $k,d$ be integers with $k>3d$, $d\ge 3$ and $q-1>k+d$,
and let ${C}$ be the standard Reed--Solomon code of dimension $k$
over $\fq$. Let $\bfs{w}$ be a word generated by a polynomial
$f\in\fq[T]$ of degree $k+d$. Given a real number $\epsilon>0$, if
the conditions
$$
q>\max\{(k+1)^2,12\,d^{\,2+\epsilon}\},\quad k\ge
(d-2)\Big(\frac{\,2}{\epsilon}+1\Big)
$$
hold, then $\bfs{w}$ is not a deep hole of ${C}$.
\end{theorem}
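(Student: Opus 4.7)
The plan is to combine Lemma \ref{lemma: nmb points V_f}, Lemma \ref{lemma: zeros with a zero coordinate} and Proposition \ref{prop:zeros with two equal coordinates} into the single lower bound \eqref{eq:lower bound useful points} for the number $N$ of $\fq$-rational zeros of $H_f$ with nonzero, pairwise-distinct coordinates. By the discussion culminating in \eqref{eq:sistema de las soluciones no deseadas}, such a zero exists precisely when $\bfs{w}$ is \emph{not} a deep hole of $C$; hence it suffices to exhibit conditions under which the right-hand side of \eqref{eq:lower bound useful points} is positive. Note that the hypothesis on $\mathrm{char}(\fq)$ is exactly what is needed to invoke Lemma \ref{lemma: nmb points V_f} (and the derived bounds on $N_1$ and $N_2$), and that $d \ge 3$ excludes the degenerate cases already handled by Lemma \ref{prop:poli de grado k genera deep holes}.

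With the positivity of \eqref{eq:lower bound useful points} as the goal, I would verify under the theorem's hypotheses the three sufficient conditions extracted in the excerpt, namely \eqref{eq:first condition on q}, \eqref{eq:first part first condition} and \eqref{eq:second part first condition}. The pair \eqref{eq:first condition on q}, i.e., $q > (k+1)^2$ and $k > 3d$, is directly built into the hypotheses. The bound $q > 4d^2$ in \eqref{eq:first part first condition} is an immediate consequence of $q > 12 d^{2+\epsilon}$ since $d \ge 3$ and $\epsilon > 0$. The remaining inequality $q > 12 d^{2 + (2d-2)/(k-d+2)}$ is where the hypothesis $k \ge (d-2)(2/\epsilon+1)$ enters: a short calculation shows that this bound on $k$ forces the exponent $(2d-2)/(k-d+2)$ to be at most $\epsilon$, so the inequality follows from $q > 12 d^{2+\epsilon}$ because $d \mapsto d^t$ is increasing for $d \ge 3$.

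The step I expect to be most delicate is this last one, since it hinges on a tight dovetailing between the bound on $k$ and the exponent $(2d-2)/(k-d+2)$; introducing the auxiliary parameter $\epsilon$ in the statement is precisely the device that decouples these two quantities and turns the otherwise implicit condition on $(k,d,q)$ into two clean inequalities. Conceptually there is no new ingredient beyond what is already assembled in the excerpt: once the three sufficient conditions are in place, the chain of estimates leading from \eqref{eq:lower bound useful points} through \eqref{eq: first inequality of the proof of lower bound useful points} and \eqref{eq:first condition on q bis} to \eqref{eq:second part first condition} yields $N \ge 1$, which produces the desired $\fq$-rational zero of $H_f$ with nonzero, pairwise-distinct coordinates and therefore proves that $\bfs{w}$ is not a deep hole of $C$.
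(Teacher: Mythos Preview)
Your plan mirrors the paper's proof exactly: after assembling \eqref{eq:lower bound useful points}, one checks that the theorem's hypotheses imply the three sufficient conditions \eqref{eq:first condition on q}, \eqref{eq:first part first condition} and \eqref{eq:second part first condition}, whence $N\ge 1$ and $\bfs w$ is not a deep hole.

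There is, however, an arithmetic slip in your verification of \eqref{eq:second part first condition}. You assert that $k\ge (d-2)\big(\tfrac{2}{\epsilon}+1\big)$ forces $\tfrac{2d-2}{k-d+2}\le\epsilon$. The latter inequality is equivalent to $k\ge d-2+\tfrac{2d-2}{\epsilon}$, whereas the hypothesis only yields $k\ge d-2+\tfrac{2d-4}{\epsilon}$, which is weaker by exactly $\tfrac{2}{\epsilon}$. Concretely, with $d=100$, $\epsilon=0.01$ and $k=19698$ (the minimum allowed, and far above $3d$), one computes $\tfrac{2d-2}{k-d+2}=\tfrac{198}{19600}>0.01$. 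The companion hypothesis $k>3d$ does not rescue this: it gives $\tfrac{2d-2}{k-d+2}<\tfrac{d-1}{d+1}$, which only helps when $\epsilon\ge\tfrac{d-1}{d+1}$. The paper itself does not spell out this step beyond the phrase ``combining'', so the discrepancy may reflect a minor inaccuracy in the stated constant (replacing $d-2$ by $d-1$ in the condition on $k$ would make your argument go through verbatim); but as written, your ``short calculation'' does not close.
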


In \cite{LiWa08} the existence of deep holes is considered. Using
the Weil estimate for certain character sums as in \cite{Wan97}, the
authors prove that, for $k,d$ as in Theorem \ref{th: deep holes -
main}, if
\begin{equation}\label{eq: DH li-wan}
q>\max\{(k+1)^2,d^{\,2+\epsilon}\}\textrm{ and }
k>\Big(\frac{2}{\epsilon}+1\Big)d+ \frac{8}{\epsilon}+2
\end{equation}
holds for a constant $\epsilon>0$, then no word $\bfs{w}$ generated
by a polynomial $f\in \fq[T]$ of degree $k+d<q-1$ is a deep hole of
the standard Reed--Solomon code of dimension $k$ over $\fq$ (see
\cite[Theorem 1.4]{LiWa08}). On the other hand, in \cite[Theorem
1.5]{CaMaPr12} it is shown that the conditions
\begin{equation}\label{eq: DH ca-ma-pr}
q>\max\{(k+1)^2,14\,d^{\,2+\epsilon}\},\quad k\ge
d\Big(\frac{\,2}{\epsilon}+1\Big)
\end{equation}
suffice to guarantee that no word $\bfs{w}$ generated by a
polynomial $f\in\fq[T]$ of degree $k+d$ is a deep hole of the
standard Reed--Solomon code of dimension $k$ over $\fq$.

Our result constitutes an improvement of those of \cite{LiWa08} and
\cite{CaMaPr12}, as can be readily deduced by comparing Theorem
\ref{th: deep holes - main} with \eqref{eq: DH li-wan} and
\eqref{eq: DH ca-ma-pr}. Nevertheless, as the ``main'' exponents in
these results are similar, we would like to stress here the
methodological aspect. Our approach is based on the estimates on the
number of $\fq$-rational zeros of symmetric polynomials of Theorem
\ref{th: estimate |V_F| without Pi_m}, and in this sense is similar
to the methodology of \cite{CaMaPr12}. Nevertheless, as the
conclusions of Theorem \ref{th: estimate |V_F| without Pi_m} apply
to a wider family of hypersurfaces than the corresponding result of
\cite{CaMaPr12}, our conclusion is stronger than that of
\cite{CaMaPr12}.
%
%---------------------------------------------------------------------
%---------------------------------------------------------------------
%---------------------------------------------------------------------
%---------------------------------------------------------------------
%---------------------------------------------------------------------
%---------------------------------------------------------------------
%---------------------------------------------------------------------
%---------------------------------------------------------------------
%
\appendix
\section{The variety of successive generic subdiscriminants}
\label{section: appendix}
The purpose of this appendix is to show the assertion on the
behavior of the variety defined by the vanishing of the first
successive generic subdiscriminants of degree $m$.

Let $\K$ be a field and $\K[X_1,\ldots,X_m]$ the ring of
multivariate polynomials in indeterminates $X_1,\ldots,X_m$ and
coefficients in $\K$. For $\bfs X:=(X_1,\ldots,X_m)$ and
$$f_{\bfs X}:=(T-X_1)\cdots(T-X_m),$$
the (generic) $k$th subdiscriminant $\mathrm{sDisc}_k(f_{\bfs X})$
of $f_{\bfs X}$ for $0\le k\le m-1$ is defined in the following way:
$$\mathrm{sDisc}_k(f_{\bfs X}):=\sum_{\stackrel{\scriptstyle
I\subset\{1,\ldots,m\}}{\#I=m-k}}\prod_{\stackrel{\scriptstyle
i,j\in I}{i<j}}(X_i-X_j)^2.$$
In particular, for $k=0$ we obtain the classical discriminant
$\mathrm{sDisc}_0(f_{\bfs X})=\mathrm{Disc}(f_{\bfs X}):=\prod_{1\le
i<j\le m}(X_i-X_j)^2$.

Observe that $\mathrm{sDisc}_k(f_{\bfs X})$ is a homogeneous element
of $\K[\bfs X]$ of degree $(m-k)(m-k-1)$. It is clear that
$\mathrm{sDisc}_k(f_{\bfs X})$ is a symmetric polynomial of $\K[\bfs
X]$, and therefore it can be expressed as a polynomial in the
elementary symmetric polynomials $\Pi_1,\ldots,\Pi_m$ of $\K[\bfs
X]$. In what follows, we shall consider $\mathrm{sDisc}_0(f_{\bfs
X}),\ldots, \mathrm{sDisc}_{k-1}(f_{\bfs X})$ as elements of the
polynomial ring $\K[\bfs\Pi_{m-k}][\Pi_{m-k+1},\ldots,\Pi_{m}]$,
where $\bfs\Pi_l:=(\Pi_1,\ldots,\Pi_l)$ for $1\le l\le m$.

We shall consider weights associated to
$\K[\bfs\Pi_{m-k}][\Pi_{m-k+1},\ldots,\Pi_{m}]$ or $\K[\bfs\Pi_m]$.
For ${\sf R}:=\K[\bfs\Pi_{m-k}][\Pi_{m-k+1},\ldots,\Pi_{m}]$ or
${\sf R}:=\K[\bfs\Pi_m]$, a {\em weight} on ${\sf R}$ is a function
$\wt:{\sf R}\to\N_0$ defined by setting
$\wt(\Pi_{m-k+1}^{\alpha_{m-k+1}}\cdots\Pi_m^{\alpha_m}):=\beta_{m-k+1}
\alpha_{m-k+1}+\cdots+ \beta_m\alpha_m$ for any
$\alpha_{m-k+1},\ldots,\alpha_m\in\Z_{\ge 0}^m$ or
$\wt(\Pi_1^{\alpha_1}\cdots\Pi_m^{\alpha_m}):=\beta_1\alpha_1+\cdots+
\beta_m\alpha_m$ for any $\alpha_1,\ldots,\alpha_m\in\Z_{\ge 0}^m$.
The weight $\wt(F)$ of an arbitrary $F\in{\sf R}$ is the highest
weight of all the monomials arising with a nonzero coefficient in
the dense representation of $F$. An element $F\in{\sf R}$ is said to
be {\em weighted homogeneous} with respect to the weight $\wt$
defined above, or simply $\wt$-weighted homogeneous, if all its
terms have the same weight. Any polynomial $F\in{\sf R}$ can be
uniquely written as a sum of weighted homogeneous polynomials
$F=\sum_{i}F_i$, where each $F_i$ is weighted homogeneous with
$\wt(F_i)=i$. The polynomials $F_i$ are called the {\em weighted
homogeneous components} of $F$. In particular, we denote by
$F^{\wt}$ the component of highest weight of any $F\in {\sf R}$.

Now we are able to state and prove the main result of this appendix.
\begin{theorem}\label{th: quotient by subdiscs is integral}
With notations as above, if $\textrm{char}(\K)$ does not divide
$m(m-1)\cdots(m-k+1)$, then the ring extension
\begin{align*}
\K[\bfs\Pi_{m-k}]\hookrightarrow {\sf
R}_k:=\K[\bfs\Pi_m]/(\mathrm{sDisc}_0(f_{\bfs X}),\ldots,
\mathrm{sDisc}_{k-1}(f_{\bfs X}))
\end{align*}
is integral.
\end{theorem}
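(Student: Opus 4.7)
The plan is to combine a graded--Nakayama reduction with a one-step Euclidean division.

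First I endow $\K[\bfs\Pi_m]$ with the positive weighted grading $\wt(\Pi_i):=i$. Each subdiscriminant $\mathrm{sDisc}_j(f_{\bfs X})$ is symmetric and homogeneous of degree $(m-j)(m-j-1)$ in $X_1,\ldots,X_m$, so as a polynomial in the $\Pi_i$'s it is weighted homogeneous of the same weight. Consequently ${\sf R}_k$ inherits a positive grading, $\K[\bfs\Pi_{m-k}]$ sits inside as a graded subring with irrelevant ideal $(\Pi_1,\ldots,\Pi_{m-k})$, and ${\sf R}_k$ is generated as a $\K[\bfs\Pi_{m-k}]$-algebra by $\Pi_{m-k+1},\ldots,\Pi_m$. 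By the graded Nakayama lemma the extension $\K[\bfs\Pi_{m-k}]\hookrightarrow{\sf R}_k$ is integral (equivalently, ${\sf R}_k$ is module-finite over $\K[\bfs\Pi_{m-k}]$) if and only if ${\sf R}_k/(\Pi_1,\ldots,\Pi_{m-k}){\sf R}_k$ is finite-dimensional over $\K$. Since this quotient is a positively graded finitely generated $\K$-algebra, finite dimensionality is equivalent to its affine variety in $\A^k(\bar\K)$ (coordinates $\Pi_{m-k+1},\ldots,\Pi_m$) reducing to the origin; that is, to the implication that $\Pi_1=\cdots=\Pi_{m-k}=0$ together with $\mathrm{sDisc}_j(f_{\bfs X})=0$ for $0\le j<k$ force $\Pi_{m-k+1}=\cdots=\Pi_m=0$ in $\bar\K$.

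Next I translate this implication into a one-variable polynomial statement. A point $(\pi_{m-k+1},\ldots,\pi_m)\in\bar\K^k$ in the vanishing locus corresponds to a monic $f\in\bar\K[T]$ of degree $m$ of the shape $f=T^m+a_{k-1}T^{k-1}+\cdots+a_0$ (the coefficients of $T^k,\ldots,T^{m-1}$ being zero since $\Pi_1=\cdots=\Pi_{m-k}=0$), satisfying $\deg\gcd(f,f')\ge k$; this reformulation of the subdiscriminant conditions is the classical subresultant characterization already quoted in the paper as \cite[Proposition 4.24]{BaPoRo06}. We must show that any such $f$ equals $T^m$. Since $\mathrm{char}(\K)\nmid m$, the derivative $f'=mT^{m-1}+(k-1)a_{k-1}T^{k-2}+\cdots+a_1$ has degree exactly $m-1$, and a one-step Euclidean division yields
\begin{equation*}
f=\tfrac{T}{m}\,f'+r,\qquad r=\tfrac{m-k+1}{m}\,a_{k-1}T^{k-1}+\tfrac{m-k+2}{m}\,a_{k-2}T^{k-2}+\cdots+\tfrac{m-1}{m}\,a_1T+a_0,
\end{equation*}
with $\deg r\le k-1$. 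Since $\gcd(f,f')=\gcd(f',r)$ has degree at least $k>\deg r$, we must have $r=0$; the vanishing of each coefficient of $r$, together with $\mathrm{char}(\K)\nmid (m-1)(m-2)\cdots(m-k+1)$, forces $a_0=a_1=\cdots=a_{k-1}=0$, so $f=T^m$, as required.

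The main technical point is the Nakayama reduction: one has to confirm carefully that the weighted grading is genuinely positive, that each $\mathrm{sDisc}_j(f_{\bfs X})$ is weighted homogeneous (which is transparent from the identification between symmetric polynomials in $\bfs X$ and $\K[\bfs\Pi_m]$ under matched gradings), and that finite dimensionality of a positively graded $\K$-algebra is equivalent to its variety being a single point. Once these are in place the Euclidean division is elementary, and the characteristic hypothesis $\mathrm{char}(\K)\nmid m(m-1)\cdots(m-k+1)$ enters precisely twice and in the expected places: $\mathrm{char}(\K)\nmid m$ to ensure $\deg f'=m-1$, and each factor $(m-j)$ for $1\le j\le k-1$ to be cancelled when deducing $a_j=0$ from $(m-j)a_j=0$.
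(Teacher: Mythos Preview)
Your proof is correct and takes a genuinely different route from the paper.

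The paper proceeds by constructing a sequence of weight orders on $\K[\bfs\Pi_m]$ and showing, through a detailed analysis of Sylvester matrices, that the leading term of $\mathrm{sDisc}_j(f_{\bfs X})$ in the resulting monomial order is $m(m-j)^{m-j-1}\Pi_{m-j}^{m-j-1}$. Since these leading monomials involve distinct variables, $\mathrm{sDisc}_0,\ldots,\mathrm{sDisc}_{k-1}$ form a Gr\"obner basis; the standard monomials then exhibit ${\sf R}_k$ as a free $\K[\bfs\Pi_{m-k}]$-module with an explicit basis. Your argument instead reduces, via the graded Nakayama lemma, to a set-theoretic statement about the special fiber, and disposes of that with a single Euclidean step $f=\tfrac{T}{m}f'+r$ together with the subresultant characterization of $\deg\gcd(f,f')$. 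Amusingly, the paper also uses exactly this division (its equations \eqref{eq: definition r_j}--\eqref{eq: expression dominant term subdisc}) but only to identify leading terms.

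What each approach buys: the paper's argument yields more --- an explicit free $\K[\bfs\Pi_{m-k}]$-basis of ${\sf R}_k$ --- at the cost of considerable combinatorial work with weight refinements and Toeplitz minors. Your argument is shorter and more conceptual, isolates exactly where each factor of the characteristic hypothesis is consumed, and makes the geometric content transparent (the fiber over the origin is the single polynomial $T^m$); its only cost is that it does not produce the module basis, which the paper does not actually need elsewhere.
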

\begin{proof}
We claim that there exists a monomial order in $\K[\bfs\Pi_m]$ such
that the leading monomial of $\mathrm{sDisc}_j(f_{\bfs X})$ in such
an order is $m(m-j)^{m-j-1}\Pi_{m-j}^{m-j-1}$ for $0\le j\le k-1$.

Assuming that the claim holds, we prove the theorem. For such a
monomial order, the leading monomials of the subdiscriminants
$\mathrm{sDisc}_j(f_{\bfs X})$ are relatively prime. This implies
that $\mathrm{sDisc}_0(f_{\bfs X}),\ldots,$
$\mathrm{sDisc}_{k-1}(f_{\bfs X})$ form a Gr\"obner basis of the
ideal they defined. It follows that the set
$$\big\{\Pi_{m-k+1}^{\alpha_{m-k+1}}\cdots\Pi_m^{\alpha_m}:
0\le\alpha_{m-j}\le m-j-2\textrm{ for }0\le j\le k-1\big\}$$
forms a basis of ${\sf R}_k$ as a $\K[\bfs\Pi_{m-k}]$-module,
showing thus the theorem.

Now we show the claim. The order of the claim will be obtained by
considering a number of (partial) weight orders, which successively
refine the previous ones. Given $\bfs\gamma\in\R^m$, the weight
order $>_{\bfs\gamma}$ determined by $\bfs\gamma$ is defined by
setting $\bfs\alpha>_{\bfs\gamma}\bfs\beta$ if
$\bfs\alpha\cdot\bfs\gamma>\bfs\beta\cdot\bfs\gamma$ for any
$\bfs\alpha,\bfs\beta\in\Z_{\ge 0}^m$, where $\cdot$ denotes the
standard inner product of $\R^m$.

Consider the weight $\wt_e:\K[\bfs\Pi_m]\to\N_0$ defined by setting
$\wt_e(\Pi_1^{\alpha_1}\cdots\Pi_m^{\alpha_m}):=\sum_{k=1}^mk\cdot\alpha_k$
for any $\alpha_1,\ldots,\alpha_m\in\Z_{\ge 0}$. Up to a power of
$m$, $\mathrm{sDisc}_j(f_{\bfs X})$ can be expressed as the $j$th
subresultant of $f_{\bfs X}$ and $\partial f_{\bfs X}/\partial T$
(see, e.g., \cite[Proposition 4.27]{BaPoRo06}). With this
expression, it is easy to deduce that $\mathrm{sDisc}_j(f_{\bfs X})$
is  $\wt_e$-weighted homogeneous of weight $(m-j)(m-j-1)$ for $0\le
j<k$.

Now we introduce the first weight to obtain the monomial order of
the claim. This weight $\wt_k:{\sf R}\to\N_0$ is defined by setting
$\wt_k(\Pi_{m-j}):=m-j$ for $0\le j<k$ and $\wt_k(\Pi_{m-j}):=0$
otherwise. In other words,
$$\wt_k(\Pi_1^{\alpha_1}\cdots\Pi_m^{\alpha_m}):=\sum_{j=0}^{k-1}(m-j)
\alpha_{m-j}\textrm{ for any }\alpha_1,\ldots,\alpha_m\in\Z_{\ge
0}.$$
The fact that $\mathrm{sDisc}_j(f_{\bfs X})$ is  $\wt_e$-weighted
homogeneous of weight $(m-j)(m-j-1)$ for $0\le j<k$ implies that
$$\wt_k(\mathrm{sDisc}_j(f_{\bfs X}))\le (m-j)(m-j-1).$$
Now we show that this inequality is actually an equality. Let
$A_1,\ldots,A_m$ be new indeterminates over $\K$, and let
$$f_{gen}:=T^m+A_1T^{m-1}+\cdots+A_m,\quad
f_j:=T^m+A_{m-j}T^j+\cdots+A_m\quad(0\le j<k).$$
We define a weight $\wt_{\bfs A}$ by setting $\wt_{\bfs A}(A_i):=i$
for any $i$, and weights $\wt_{\bfs A_j}$ for $0\le j<k$ by setting
$\wt_{\bfs A_j}(A_{m-i}):=m-i$ for $0\le i\le j$ and $\wt_{\bfs
A_j}(A_{m-i}):=0$ otherwise. Then we may easily reexpress our
assertion in terms of weights $\wt_{\bfs A}$ and $\wt_{\bfs A_j}$.
It is clear that
$$
\wt_{\bfs A_j}(\mathrm{sDisc}_j(f_j))\le
\wt_s(\mathrm{sDisc}_j(f_{\bfs X}))\le\wt_{\bfs
A}(\mathrm{sDisc}_j(f_{gen}))=(m-j)(m-j-1).
$$
Therefore, it suffices to show that
\begin{align*}
\wt_{\bfs A_j}\mathrm{sDisc}_j(f_j)=(m-j)(m-j-1).
\end{align*}

Observe that $f_j=f_j'\cdot \frac{T}{m}+r_j$, where $f_j'$ is the
derivative of $f_j$ with respect to $T$ and
\begin{equation}\label{eq: definition r_j}
r_j:=\frac{m-j}{m}A_{m-j}T^j+\frac{m-j+1}{m}A_{m-j+1}T^{j-1}+\cdots+A_m.
\end{equation}
According to \cite[Lemma 7.1]{GeCzLa92},
\begin{align}
\mathrm{sDisc}_j(f_j)=\mathrm{sRes}_j(f_j,f_j')
&=(-1)^{(m-j)(m-j-1)}m^{m-j}\mathrm{sRes}_j(f_j',r_j)\notag\\
&=m^{m-j}\mathrm{sRes}_j(f_j',r_j)\notag\\
&=m^{m-j}(\mbox{$\frac{m-j}{m}A_{m-j}$})^{m-j-1}\notag\\
&=m(m-j)^{m-j-1}A_{m-j}^{m-j-1},
\label{eq: expression dominant term subdisc}
%&=(-1)^{(m-1)(m-2)}m^{m-1}\mathrm{Subres}(f_{red}',r)\\
%&=m^{m-1}\mathrm{Subres}(f_{red}',r),
\end{align}
where $\mathrm{sRes}_j$ denotes the $j$th subresultant with respect
to $T$. This in particular shows the assertion on the weight $\wt_k$
of the subdiscriminants.

For $0\le j<k$, we consider the homogeneous $\wt_k$-weighted
component $(\mathrm{sDisc}_j(f_{\bfs X}))^{\wt_k}$ of highest
$\wt_k$-weight of $\mathrm{sDisc}_j(f_{\bfs X})$, namely the
homogeneous $\wt_k$-weighted component of $\wt_k$-weight
$(m-j)(m-j-1)$. According to \eqref{eq: expression dominant term
subdisc}, the term $m(m-j)^{m-j-1}\Pi_{m-j}^{m-j-1}$ occurs in this
sum for $0\le j<k$. From the fact that $\mathrm{sDisc}_j(f_{\bfs
X})$ is $\wt_e$-weighted homogeneous of $\wt_e$-weight
$(m-j)(m-j-1)$, and the definition of $\wt_k$, we conclude that all
the nonzero monomials arising in $(\mathrm{sDisc}_j(f_{\bfs
X}))^{\wt_k}$ are monomials in $\Pi_{m-k+1},\ldots,\Pi_m$. Next we
refine the weight order $\wt_k$ by the weight order $\wt_0$ defined
by the linear form $\bfs\gamma_0:=(0,\ldots,0,-1,\ldots,-1)$, where
there are $m-k$ zero entries followed by $k$ entries equal to $-1$.
In other words, the terms in  $(\mathrm{sDisc}_j(f_{\bfs
X}))^{\wt_k}$ of highest $\wt_0$-weight are the monomials of least
degree.

As $(\mathrm{sDisc}_0(f_{\bfs X}))^{\wt_k}$ consists of a sum of
monomials of the form
$c_{\bfs\alpha}\Pi_{m-k+1}^{\alpha_{m-k+1}}\cdots\Pi_m^{\alpha_m}$
with $(m-k+1)\alpha_{m-k+1}+\cdots+m{\alpha_m}=m(m-1)$, taking into
account that the monomial $m^m\Pi_m^{m-1}$ arises in such a sum we
conclude that
$$((\mathrm{sDisc}_0(f_{\bfs X}))^{\wt_k,\wt_0}:=((\mathrm{sDisc}_0
(f_{\bfs X}))^{\wt_k})^{\wt_0}=m^m\Pi_m^{m-1}.$$

Next we argue that the term $m(m-j)^{m-j-1}\Pi_{m-j}^{m-j-1}$ occurs
in $((\mathrm{sDisc}_j(f_{\bfs X}))^{\wt_k,\wt_0}$ for $1\le j<k$.
The obvious isomorphism from $\K[\Pi_{m-k+1},\ldots,\Pi_m]$ to
$\K[A_{m-k+1},\ldots,A_m]$ maps $(\mathrm{sDisc}_j(f_{\bfs
X}))^{\wt_k}$ to $\mathrm{sDisc}_j(f_{k-1})$ for $1\le j<k$.
Therefore, it suffices to show that the term
$m(m-j)^{m-j-1}A_{m-j}^{m-j-1}$ occurs in
$(\mathrm{sDisc}_j(f_{k-1}))^{\wt_0}$ for $1\le j<k$. The
homogenization of $(\mathrm{sDisc}_j(f_{k-1}))^{\wt_0}$ with
homogenizing variable $A_0$ is
\begin{equation}\label{eq: disc_j as a subres}
((\mathrm{sDisc}_j(f_{k-1}))^{\wt_0})^h=
\mathrm{sRes}_j((f_{k-1})^h,(f_{k-1}')^h)=
(mA_0)^{m-k+1}\mathrm{sRes}_j((f_{k-1}')^h,r_{k-1}),
\end{equation}
where
\begin{align*}
(f_{k-1})^h&:=A_0T^m+A_{m-k+1}T^{k-1}+\cdots+A_m,\\
(f_{k-1}')^h&:=mA_0T^{m-1}+(k-1)A_{m-k+1}T^{k-2}+\cdots+A_{m-1},
\end{align*}
and $r_{k-1}$ is defined in \eqref{eq: definition r_j}. Denote
$C_{m-j}:=\frac{m-j}{m}A_{m-j}$ for $0\le j<k$ and $B_j:=(m-j)A_j$
for $j=0, m-k+1,\ldots,m-1$, so that
$$
r_{k-1}=C_{m-k+1}T^{k-1}+\cdots+C_m,\quad
(f_{k-1}')^h:=B_0T^{m-1}+B_{m-k+1}T^{k-2}+\cdots+B_{m-1}.$$
Note that both $C_j$ and $B_j$ are nonzero multiples in $\K$ of
$A_j$ for any $j$. The corresponding Sylvester matrix
$\mathrm{Syl}_0:=\mathrm{Syl}((f_{k-1}')^h,r_{k-1})$ is
$$\mathrm{Syl}_0=\left(
    \begin{array}{ccccccccc}
             B_0   & 0           & \cdots & \!\!\!  B_{m-k+1} \!\!\!    & \cdots & B_{m-1} \\
                              & \ddots              & \ddots &  &\ddots & & \ddots\\
                              &                  & B_0 & 0 &\cdots& \!\!\!B_{m-k+1}  \!\!\!   & \cdots & B_{m-1} \\
     C_{m-k+1}\!\!\! &      \cdots            & \cdots  &  C_m     &  \\
                              &\!\!\! C_{m-k+1} \!\!\!       & \cdots & \cdots  & C_m \\
                              &               & \ddots &  & & \ddots\\
                              &                  &        &   \!\!\! C_{m-k+1}\!\!\! &      \cdots      & \cdots      &   C_m     \\
                              &                  &        &  & \!\!\! C_{m-k+1}\!\!\! &      \cdots      & \cdots      &   C_m     \\
    \end{array}
  \right),
$$
where there are $k-1$ rows with $B$'s and $m-1$ rows with $C$'s. The
sum $(\mathrm{sDisc}_j(f_{k-1}))^{\wt_0}$ of the least-degree terms
of $\mathrm{sDisc}_j(f_{k-1})$ is, up to a nonzero multiple in $\K$,
the coefficient of $B_0^{k-1}$ in the determinant of
$\mathrm{Syl}_0$, which equals $C_m^{m-1}=A_m^{m-1}$, as already
shown.

Now, the corresponding expression for the $j$th discriminant
$((\mathrm{sDisc}_j(f_{\bfs X}))^{\wt_k,\wt_0}$ is, up to a nonzero
multiple in $\K$, the determinant of the submatrix $\mathrm{Syl}_j$
of $\mathrm{Syl}_0$ obtained by deleting the $j$ last rows of $B$'s,
the $j$ last rows of $C$'s, and the last $2j$ columns of
$\mathrm{Syl}_0$. The term $B_0$ arises exactly in the first $k-1-j$
diagonal entries of $\mathrm{Syl}_j$. Therefore, the coefficient of
$B_0^{k-1-j}$ in the expansion of $\det(\mathrm{Syl}_j)$ as a
polynomial in $B_0,\ldots,B_{m-1}$ equals, up to a nonzero
$\K$-multiple, the sum $(\mathrm{sDisc}_j(f_{k-1}))^{\wt_k}$, and
equals the determinant of the $(m-j-1)\times(m-j-1)$-matrix
$$T_j:=\left(
             \begin{array}{cccc}
               C_{m-j} & \!\!\!C_{m-j+1}\!\!\! & \cdots &\!\!\! C_{2(m-j-1)}\\
               C_{m-j-1} & C_{m-j} & \ddots & \vdots \\
               \vdots &\ddots  & \ddots & C_{m-j+1} \\
               C_2 & \cdots & \!\!\! C_{m-j-1}\!\!\!& C_{m-j} \\
             \end{array}
           \right),
$$
with the convention that $C_l:=0$ for $l>m$ or $l<m-k+1$. In
particular, the term
$B_0^{k-1-j}C_{m-j}^{m-1-j}=(mA_0)^{k-1-j}\big(\frac{m-j}{m}A_{m-j}\big)^{m-1-j}$
arises in the monomial expansion of $\det(\mathrm{Syl}_j)$.
Multiplying the term
$m^{k-1-j}\big(\frac{m-j}{m}A_{m-j}\big)^{m-1-j}$ by $m^{m-k+1}$,
according to \eqref{eq: disc_j as a subres}, we conclude that the
term $m(m-j)^{m-j-1}A_{m-j}^{m-j-1}$ arises in the sum
$(\mathrm{sDisc}_j(f_{k-1}))^{\wt_0}$.

Now we successively refine this (partial) monomial order with weight
orders $\wt_1,\ldots,\wt_{k-1}$ so that
\begin{equation}\label{eq: sdisc_j in terms of weights}
(\mathrm{sDisc}_j(f_{k-1}))^{\wt_0,\wt_1,\ldots,\wt_j}=
m(m-j)^{m-j-1}A_{m-j}^{m-j-1}\end{equation}
for $1\le j<k$. The weight $\wt_i$ is defined by setting
$\wt_i(A_{m-l}):=1$ for $l\le i$ and $0$ otherwise. Observe that
$\wt_j$ orders monomials taking into account degree only in
$\Pi_{m-k+1},\ldots,\Pi_{m-j}$ and
$\wt_j\circ\cdots\circ\wt_1=\wt_j$ for any $j$. As $\det T_j$
essentially represents $(\mathrm{sDisc}_j(f_{k-1}))^{\wt_0}$, up to
a nonzero multiple in $\K$, for $i<j$ we have that composition with
$\wt_1,\ldots,\wt_i$ yields the matrix
$$T_j^i:=\left(
             \begin{array}{cccccc}
               C_{m-j} & \cdots & C_{m-i} &\\
               C_{m-j-1} & C_{m-j} &  & \ddots \\
               \vdots &\ddots  & \ddots & & C_{m-i}\\
               \vdots &\ddots  & \ddots & & \vdots\\
               C_2 & \cdots & &\!\!\! C_{m-j-1}\!\!\!&\! C_{m-j} \\
             \end{array}
           \right).$$
In particular, $T_j^j$ is lower triangular with $\det
T_j^j=C_{m-j}^{m-1-j}$, which readily implies \eqref{eq: sdisc_j in
terms of weights} and completes the proof.
\end{proof}

\bibliographystyle{amsalpha}
\bibliography{refs1,finite_fields,Newref}
\end{document}

\section{Polynomials $F_1,\ldots,F_s$ not depending on
$\Pi_{m-k+1},\ldots,\Pi_m$}

\begin{corollary}\label{coro: singular locus for F_s without Pi(m-k+1),...,Pi(m)}
Let $k\ge 1$ and $F_1=0,\ldots,F_s=0$ be polynomials as in
\eqref{def: f} not depending on $\Pi_{m-k+1},\ldots,\Pi_m$. Then the
singular locus $\Sigma$ of $V:=V(\bfs F_s)\subset\A^m$ has dimension
at most $m-k-s-1\le m-s-2$.
\end{corollary}
\begin{proof}
According to Lemma \ref{lemma: singular locus F_s not depending on
Pi_(m-k+1),..,Pi_m}, we have the inclusion
\begin{equation}\label{eq: singular locus in terms of partitions}
\Sigma\subset\bigcup_{\mathcal{L}_{\mathcal{I}_t}}
\mathcal{L}_{\mathcal{I}_t}\cap V,\end{equation}
where $\mathcal{I}_t:=\{I_1,\ldots,I_t\}$ runs over all the
partitions of $\{1,\ldots,m\}$ into $t\le m-k-1$ nonempty subsets
$I_j\subset \{1,\ldots,m\}$ and $\mathcal{L}_{\mathcal{I}_t}$ is the
linear variety
$$
\mathcal{L}_{\mathcal{I}_t}:=
\mathrm{span}(\bfs{v}^{(I_1)},\ldots,\bfs{v}^{(I_t)})
$$
spanned by the vectors $\bfs{v}^{(I_j)}:=
(v_1^{(I_j)},\ldots,v_{m}^{(I_j)})\in\{0,1\}^m$ defined by
$v_l^{(I_j)}:=1$ iff $l\in I_j$. %Without loss of generality, we may
%assume that the union in \eqref{eq: singular locus in terms of
%partitions} is irredundant and the parameter $t$ in $\mathcal{I}_t$
%is chosen so that it is maximal with this property.

Let $\mathcal{C}$ be an irreducible component of $\Sigma$. As a
consequence of \eqref{eq: singular locus in terms of partitions},
there exists $\mathcal{I}_t$ as above such that $\mathcal{C}\subset
\mathcal{L}_{\mathcal{I}_t}\cap V$. It suffices to show that
$\mathcal{L}_{\mathcal{I}_t}\cap V$ has dimension at most
***. Without loss of generality, we may assume
that $\mathcal{I}_t$ is maximal with this property, namely there is
no partition $\mathcal{I}_{t'}$ with $\mathcal{I}_{t'}\subset
\mathcal{I}_t$ for which $\mathcal{C}\subset
\mathcal{L}_{\mathcal{I}_{t'}}\cap V$.

Assume without loss of generality that $i\in I_i$ for $1\le i\le t$.
Consider the mapping
$$\begin{array}{rcl}
\Phi_{\mathcal{I}_t}:\A^t&\to&\A^t,\\
(x_1,\ldots,x_t)&\mapsto&
\bfs\Pi_{m-k}(x_1\bfs{v}^{(I_1)},\ldots,x_t\bfs{v}^{(I_t)}),
\end{array}$$
where $\bfs\Pi_{m-k}:=(\Pi_1,\ldots,\Pi_{m-k})$. The set
$\mathcal{L}_{\mathcal{I}_t}\cap V$ is isomorphic to the set
$$\{\bfs x\in\A^t:\bfs G_s(\Phi_{\mathcal{I}_t}(\bfs x))=\bfs 0\}.$$

Denote $L_{\mathcal{I}_t}:\A^t\to\A^m$,
$L_{\mathcal{I}_t}(x_1,\ldots,x_t):=(x_1\bfs{v}^{(I_1)},\ldots,x_t\bfs{v}^{(I_t)})$.
Observe that $\Phi_{\mathcal{I}_t}=\Pi_{m-k}\circ
L_{\mathcal{I}_t}$. We have
\begin{align*}
J(\bfs G_s&\circ \Phi_{\mathcal{I}_t})(\bfs x)= J(\bfs
G_s)(\Phi_{\mathcal{I}_t}(\bfs x))\cdot
J(\bfs\Pi_{m-k})(L_{\mathcal{I}_t}(\bfs x))\cdot
J(L_{\mathcal{I}_t})\\[1ex]
&=J(\bfs G_s)(\Phi_{\mathcal{I}_t}(\bfs x))\cdot B_{m-k}^*\cdot
A_{m-k}^*(L_{\mathcal{I}_t}(\bfs x))\cdot\left(
                            \begin{array}{ccccc}
                              \bfs{v}^{(I_1)} & \ldots  & \bfs{v}^{(I_t)}   \\
                            \end{array}
                          \right)\\
&=J(\bfs G_s)(\Phi_{\mathcal{I}_t}(\bfs x))\cdot
B_{m-k}^*(L_{\mathcal{I}_t}(\bfs x))\cdot \left(
                            \begin{array}{cccccccc}
                              n_1 & n_2 & \ldots & n_t   \\
                              n_1x_1 & n_2x_2 & \ldots & n_tx_{m-3}   \\
                              \vdots & \vdots &  &  \vdots     \\
                               n_1x_1^{m-k-1}  & n_2x_2^{m-k-1} & \ldots & n_tx_t^{m-k-1}
                            \end{array}
                          \right),
\end{align*}
where $n_i:=\#(I_i)$ for $1\le i\le t$. By the maximality of $t$,
there exists a nonempty Zariski open subset $\mathcal{U}$ of
$\mathcal{C}$ such that for any $\bfs x\in\mathcal{U}$ we have
$x_i\not= x_j$ for $1\le i<j\le t$. It follows that the polynomials
$(\bfs G_s\circ \Phi_{\mathcal{I}_t})(\bfs X)$ intersect
transversally on $\mathcal{U}$. This implies that $\mathcal{C}$ has
dimension at most $t-s\le m-k-s-1$.
We readily deduce the corollary
\end{proof}